\newcommand{\nn}{\mathbb N}
\newcommand{\rr}{\mathbb R}
\newcommand{\norm}[1]{\left\lVert#1\right\rVert}
\newtheorem{thm}{Theorem}
\newtheorem{prop}{Proposition}
\newtheorem{lem}{Lemma}
\newtheorem{coro}{Corollary}
\newtheorem{exam}{Example}
\newtheorem{rem}{Remark}
\newtheorem{definition}{Definition}
\title{On Some Aspects of Pseudonorm Compact and Montel Operators on Locally Solid Vector Lattices}
\author{Nazife Erkur{\c s}un-{\"O}zcan$^{1,*},$ Niyazi An{\i}l Gezer$^2$\\
	{\small $^{1}$ Department of Mathematics,} \\ {\small Hacettepe University, 06800 Ankara, Turkey.} \\{\small erkursun.ozcan@hacettepe.edu.tr} \\
	{\small $^{2}$ Department of Mathematics,} \\ {\small Middle East Technical University, 06800 Ankara, Turkey.} \\ {\small ngezer@metu.edu.tr} }
\begin{document}

\maketitle
\noindent

\begin{abstract}
Unbounded convergences have been applied successfully to locally solid topologies on vector lattices. In the present paper, we first expose several properties of various classes of Riesz pseudonorms on vector lattices. We accomplish this by abstracting some generalities of the norm of an $AM$-space with strong norm unit to locally solid topologies induced by a pseudonorm. By using these classes of pseudonorms, we study compactness properties of operators (not necessarily linear) between locally solid (not necessarily Hausdorff) topologies. We study new classes of operators such as pseudonorm compact, pseudo-semicompact and pseudo-$AM$-compact operators as well as the classical Montel operators. 

{\bf Keywords: Banach Lattices, Compact Operator, Unbounded convergence, Locally solid topology}

{\bf 2010 MSC:} Primary 46A40, 47B07, 46B42; Secondary 46A16.

\end{abstract} 

\maketitle

\section{Introduction}
One of the main goals of this paper is to study compactness properties of possibly non-linear operators between possibly non-Hausdorff locally solid vector lattices. We accomplish this by abstracting some generalities of the norm of an $AM$-space with a strong norm unit to locally solid topologies induced by a pseudonorm. We then study compactness properties of operators between such spaces by utilizing the properties of these pseudonorms. Several classes and examples of compact-like operators with completely new origins are given. Comparison of various versions of boundedness forms the basic method of our analyses throughout. Most of our results are motivated from~\cite{EV2017,AEEM2018, DEM2018}. 

Recent manuscripts, see~\cite{DE2018, EV2017, GXU, GTX, LC, Y, W1977, EG2019, Z2018}, about various types of convergences on vector lattices, and, the recent progress towards the operator theory on both vector lattice valued pseudonormed spaces and on spaces equipped with unbounded convergences, see~\cite{AEEM2018, DE2018, DEM2018, EG2019, EGZ2018, EGZ2019, Pli2011, PliPo} and the references therein, stimulated us to write this article. Some of the present material can also be profitably used in the settings of vector lattice valued pseudonormed spaces.

Structure of the paper is as follows. In Section~\ref{PseudoBasics}, we study various notions related to pseudonormed vector lattices. Most of these notions are motivated from the factorization theory given in the sequel. We derive many results related to \textit{bounded}, \textit{almost bounded}, \textit{coarsely almost bounded} and \textit{semibounded pseudonorms}, see Definition~\ref{Def001}. Historically speaking, pseudonorms satisfying certain order theoretical properties were studied by many. For instance, Fatou and order continuous pseudonorms are among the most useful pseudonorms. In addition to the fact that main results of \cite{EV2017} establish the relationships between unbounded convergences and pseudonorms on vector lattices, results presented in Section~\ref{PseudoBasics} abstract further properties of these special pseudonorms which can be combined with the settings of~\cite{DE2018}. In Section~\ref{PseudoCompact}, we study compactness properties of operators between two pseudonormed vector lattices. Taken all together, results presented in Section~\ref{PseudoCompact} form a factorization theory, see~\cite[Chapter 18]{Z1983} and~\cite{AEEM2018}, of operators having compactness properties. We derive various results related to \textit{pseudonorm compact}, \textit{pseudo-semicompact} and \textit{pseudo-$AM$-compact} operators, see Definition~\ref{DefnPsCompact} and Definition~\ref{DefnSemicompactVer}. In the last part of Section~\ref{PseudoCompact}, we study $M$-weakly compact and $L$-weakly compact operators on pseudonormed vector lattices. In Section~\ref{Fatou}, we study compact and Montel operators on locally solid Hausdorff spaces. By~\cite[Theorem 2.28]{AB1}, a linear topology $\tau$ is locally solid if and only if there exists a family $\{\rho_i \}_{i\in I}$ consisting of $\tau$-continuous Riesz pseudonorms generating $\tau$. Hence, locally solid vector lattices arising in Section~\ref{Fatou} have the common property that their topology is induced by a system of pseudonorms. The notion of Montel operators can be seen as an abstraction of pseudonorm compact operators.
\section{Preliminaries}

Unexplained terminology about vector lattices can be found in ~\cite{AB2, F1974, LZ1971, MN91, V1967, Z1983}. Throughout this paper, all vector lattices are assumed to be Archimedean. 

Let $X$ be a vector lattice. We say that a net $x_{\alpha}$ in $X$ order converges to $x\in X$ if there exists a net $y_{\beta},$ possibly over a different index set, such that $y_{\beta}\downarrow 0$ and that for every $\beta$ there exists some $\alpha_0$ such that $|x_{\alpha}-x|\leq y_{\beta}$ for all $\alpha_0\leq \alpha$. In this case, we write $x_{\alpha}\xrightarrow{o} x$. A net $x_{\alpha}$ in $X$ \textit{unbounded order convergent} to $x\in X$ if $|x_{\alpha}-x|\wedge u\xrightarrow{o} 0$ for all $u\in X^+$. In this case, we say that $x_{\alpha}$ $uo$-converges to $x,$ and, we write $x_{\alpha}\xrightarrow{uo} x.$ A net $x_{\alpha}$ in a normed lattice $X$ \textit{unbounded norm converges} to $x$ if $|x_{\alpha}-x|\wedge u\xrightarrow{\norm{\cdot}} x$ for every $u\in X^+$. In this case, we write $x_{\alpha}\xrightarrow{un} x$. A systematic study of unbounded convergences can be found in~\cite{Y,DEM2018, EV2017,GXU,GTX,LC, W1977}. In general, unbounded order convergence is not topological. However, unbounded norm convergence defines a topology on $X,$ more details can be found in the aforementioned articles and and in the references therein.
\begin{exam}
	The notion of unbounded order convergence in vector lattices is a generalization of almost everywhere convergence. Let $(\Omega,\Sigma,\mu)$ be a $\sigma$-finite measure space. A sequence $x_n$ in $L^p(\Omega)$ order converges to $x\in L^p(\Omega)$ $(1\leq p \leq \infty)$ if and only if $x_n$ converges to $x$ almost everywhere and there exists some $z\in L^p(\Omega)$ such that $|x_n|\leq z$ for all $n$. In the case $p<\infty$, $x_n$ is unbounded order convergent to $x$ if and only if $x_n$ converges almost everywhere to $x$. In the cases of $c_0$ and $\ell_p$ with $1\leq p\leq \infty,$ $uo$-convergence of nets agrees with coordinate-wise convergence, see~\cite{GXU}. 
\end{exam}

Vector lattices are prime examples, see~\cite[page 36]{F1974}, of non-locally convex spaces in analysis. A linear topology on a vector lattice $X$ is said to be \textit{locally solid} if zero has a neighborhood basis consisting of solid sets. A subset $B$ of $X$ is said to be \textit{topologically bounded} if for each zero neighborhood $U$ in $X$ there exists a $\lambda>0$ such that $B\subseteq \lambda U.$ A locally solid vector lattice $X$ is said to have \textit{Lebesgue property}, see~\cite[Chapter 3]{AB1} and~\cite[Section 2.24]{F1974}, if every order null net in $X$ converge to zero with respect to the locally solid topology on $X$. A locally solid topology on a vector lattice $X$ can be used to construct a new unbounded locally solid topology on $X$. We refer the reader to~\cite{AEEM2018, DE2018, DEM2018,EV2017} for more details.

Let $X$ be a vector lattice. A non-negative function $\rho\colon X\to \rr$ is said to be a pseudonorm on $X$ if $\rho(x+y)\leq \rho(x)+\rho(y)$ and $\lim_{\theta\to 0} \rho(\theta x)=0$ for all $x,y\in X$. A pseudonorm $\rho$ is said to be a \textit{Riesz pseudonorm} if $|x|\leq |y|$ implies $\rho(x)\leq \rho(y)$ for $x,y\in X$. Although we use the classical definitions of normed vector lattice and seminormed vector lattice, see~\cite{AB2,MN91,Z1983}, a more general definition for pseudonormed vector lattices is required for our purposes. More details are given in Section~\ref{PseudoBasics}.

\section{Unbounded Pseudonorms on Vector Lattices}\label{PseudoBasics}
By a pseudonormed vector lattice $(X,\rho)$ we mean a pair consisting of an Archimedean vector lattice $X$ together with a pseudonorm $\rho\colon X\to \rr$. For the purposes of the present paper, it is more convenient to separately write the Riesz pseudonorm property whenever it is needed.

We put $U_{\rho}\coloneqq \{x\in X\colon \rho(x)\leq 1\}$ for the unit ball with respect to $\rho$. If the pseudonorm in discussion is clear from the context, we denote by $X$ the pseudonormed vector lattice $(X,\rho)$.
\begin{exam}\label{Example0001}
	Let $X$ be a vector lattice. It follows from classical definitions that every seminorm on $X$ is a pseudonorm. If $\rho$ is a Riesz seminorm on $X$ then $\rho$ is a Riesz pseudonorm. If $\rho$ is a seminorm then there are some derived expressions such as $x\mapsto \frac{\rho(x)}{1+\rho(x)},$ $x\mapsto \frac{\rho(x)}{2+\rho(x)},$ and so forth; which are pseudonorms but not seminorms, in general. Further, the assignment $x\mapsto \theta\frac{\rho(x)}{1+\rho(x)}+(1-\theta)\frac{\rho(x)}{2+\rho(x)}$ for $\theta\in [0,1]$ is again a pseudonorm. If $f$ is a functional on $X$ then $\rho_f(x)=|f(x)|$ is a seminorm on $X$. If the functional $f\colon X\to \rr$ is order bounded then $\rho_f(x)=|f(|x|)|$ is a Riesz seminorm on $X$. It is also clear that not every pseudonorm is a vector (lattice) norm, see~\cite{AEEM2018, DE2018}, on $X$ and vice versa.
\end{exam}
The observation given in the next example demonstrates an important difference between normed and pseudonormed cases.
\begin{exam}\label{Example0002}
	Let $(X,\rho)$ be a pseudonormed vector lattice and $\epsilon>0$. In general, it does not follow from $x\in \epsilon U_{\rho}$ that $\rho(x)\leq \epsilon$. In view of Example~\ref{Example0001}, consider the pseudonorm $\rho(x)=|x|/(1+|x|)$ on the set of real numbers. For $\epsilon=1/2$ and $x=-3/2$ one has $x\in \epsilon U_{\rho},$ but $\rho(x)>\epsilon$. 
\end{exam}

\begin{exam}
	Let $X$ be an atomic normed vector lattice. Denote by $\{x_{\alpha}\}_{\alpha}\subseteq X^+$ the corresponding orthogonal maximal system of atoms. An element $x\in X$ can uniquely be written as $x=o{\text -}\sum_{\alpha}\lambda_{\alpha}x_{\alpha}$ for $\lambda_{\alpha}\in \rr.$ We recall that order convergence is used in this series representation. Hence, one can derive various pseudonorms, as done in Example~\ref{Example0001}, by using the norm continuous functionals $\phi_{\alpha}(x)=\lambda_{\alpha}$ on $X$.
\end{exam}

A nonempty subset $B\subseteq X$ of a pseudonormed vector lattice $(X,\rho)$ is said to be \textit{pseudo-bounded} if the set $\rho(B)$ is bounded. If $\rho$ is a Riesz pseudonorm on $X$ and $B\subseteq X$ is order bounded then $B$ is pseudo-bounded. In this case, i.e., when $\rho$ is a Riesz pseudonorm on $X$, a set $B\subseteq X$ is pseudo-bounded if and only if the set $|B|=\{|b|\colon b\in B \}\subseteq X^+$ is pseudo-bounded.

\begin{exam}
	Let $K$ be an infinite compact Hausdorff space, $x_0\in K$. As usual, $C(K)$ denotes the Banach lattice of real valued continuous functions on $K$. We denote by $\delta_{x_0}$ the evaluation functional at $x_0\in K,$ i.e., $\delta_{x_0}(f)=f(x_0)$ for $f\in C(K).$ We put $\rho(f)=|\delta_{x_0}(|f|)|=|f|(x_0)$ for $f\in C(K).$ By considering the set of continuous functions vanishing at $x_0$ we conclude that not every pseudo-bounded set is order bounded in $C(K)$. We further observe that in general $\rho(\inf_{\alpha}f_{\alpha})\neq \inf_{\alpha} \rho(f_{\alpha})$ even in the case $f_{\alpha}\downarrow 0$ in $C(K)$.
\end{exam}

Although the notion of almost order bounded sets in vector lattices is classical, see~\cite{AB2, LZ1971,Z1983}, a conceptional notion for almost pseudo-bounded sets is needed in the factorization theory given in Section~\ref{PseudoCompact}. 

A nonempty subset $B$ of the pseudonormed vector lattice $(X,\rho)$ is said to be \textit{almost pseudo-bounded} if for every $\epsilon>0$ there exists an order interval $[x,y]$ in $X$ such that $B\subseteq [x,y]+\epsilon U_{\rho}.$ In the case of seminorms, this definition agrees with~\cite[Proposition 2.3.2]{MN91}.

\begin{exam}
	Let $\rho$ be a Riesz pseudonorm on $X$. If a subset $B\subseteq X$ is order bounded then $B$ is almost pseudo-bounded. Every almost pseudo-bounded set is pseudo-bounded. 
\end{exam}

Following characterization is standard in the case of the lattice norm of a Banach lattice, see~\cite[Theorem 121.1]{Z1983}. In view of Example~\ref{Example0002}, we have a similar characterization in the case of Riesz seminorms.

\begin{lem}\label{Ppseudoalmostbounded}
	Suppose that $\rho$ is a Riesz seminorm on a vector lattice $X$. Following statements are equivalent for a nonempty subset $B$ of $X$:
	\begin{itemize}
		\item[\em i.] {The set $B$ is almost pseudo-bounded.} 
		\item[\em ii.] {For every $\epsilon>0$ there exists $u\in X^+$ such that $\rho((|x|-u)^+)\leq \epsilon$ for all $x\in B$. }
	\end{itemize}
\end{lem}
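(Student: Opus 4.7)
The plan is to prove both implications directly, making essential use of the homogeneity of $\rho$ (i.e., the fact that $\rho$ is a seminorm rather than only a pseudonorm), which ensures $\epsilon U_{\rho} = \{w\in X\colon \rho(w)\leq \epsilon\}$. Without this identification, the argument would stumble on exactly the obstruction flagged in Example~\ref{Example0002}.

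For (i) $\Rightarrow$ (ii), given $\epsilon>0$, I fix an order interval $[a,b]$ such that $B\subseteq [a,b]+\epsilon U_{\rho}$ and set $u\coloneqq |a|\vee |b|\in X^+$. For $x\in B$ write $x=c+r$ with $a\leq c\leq b$ and $\rho(r)\leq \epsilon$. Since $|c|\leq u$, the triangle inequality in the lattice gives $|x|\leq |c|+|r|\leq u+|r|$, so $(|x|-u)^+\leq |r|$. Applying the Riesz seminorm property then yields $\rho((|x|-u)^+)\leq \rho(|r|)=\rho(r)\leq \epsilon$, as required.

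For (ii) $\Rightarrow$ (i), given $\epsilon>0$, I choose $u\in X^+$ with $\rho((|x|-u)^+)\leq \epsilon$ for every $x\in B$, and I will use the order interval $[-u,u]$. For $x\in B$ define the truncation $y\coloneqq (x^+\wedge u)-(x^-\wedge u)$, which clearly lies in $[-u,u]$. The key computation is
\[
x-y=(x^+-x^+\wedge u)-(x^- - x^-\wedge u)=(x^+-u)^+-(x^--u)^+ .
\]
Because $x^+\wedge x^-=0$ forces $(x^+-u)^+\wedge (x^--u)^+=0$, these two positive parts are disjoint, so $|x-y|=(x^+-u)^+ + (x^--u)^+$. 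Using $x^\mp\geq 0$, one has $(x^\pm-u)^+\leq (x^++x^- -u)^+=(|x|-u)^+$, and in fact the sum is dominated by $(|x|-u)^+$ (the disjointness allows the sum to be replaced by the supremum). Therefore $|x-y|\leq (|x|-u)^+$, and the Riesz property gives $\rho(x-y)\leq \rho((|x|-u)^+)\leq \epsilon$, i.e.\ $x-y\in \epsilon U_{\rho}$. Hence $B\subseteq [-u,u]+\epsilon U_{\rho}$.

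The only step requiring genuine care is the lattice identity $|x-y|=(x^+-u)^+ + (x^--u)^+$ in the second implication, together with the subsequent bound by $(|x|-u)^+$; the remainder reduces to the elementary properties of Riesz seminorms and the translation $\epsilon U_{\rho}=\{w:\rho(w)\leq \epsilon\}$ available in the seminorm setting.
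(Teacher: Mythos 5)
Your proof is correct, and both implications are sound; the place where it departs from the paper is the direction (ii)\,$\Rightarrow$\,(i). The paper obtains the decomposition of $x$ abstractly: from $0\leq x^{\pm}\leq u+(|x|-u)^+$ it invokes the Riesz decomposition property to produce $x_1,x_2,x_3,x_4$ with $x=(x_1-x_3)+(x_2-x_4)$, $x_1-x_3\in[-u,u]$ and $\rho(x_2-x_4)\leq 2\rho((|x|-u)^+)$. You instead construct the decomposition explicitly via the truncation $y=(x^+\wedge u)-(x^-\wedge u)$ and then control $|x-y|$ using the identity $a-a\wedge u=(a-u)^+$ for $a\geq 0$ together with the disjointness of $(x^+-u)^+$ and $(x^--u)^+$; all of these lattice manipulations check out, including the final bound $(x^+-u)^+ + (x^--u)^+=(x^+-u)^+\vee(x^--u)^+\leq(|x|-u)^+$. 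Your route buys an explicit witness for the decomposition and the sharper constant $\epsilon$ in place of the paper's $2\epsilon$ (immaterial here, since $\epsilon$ is arbitrary, but cleaner); the paper's route is shorter to state once one is willing to cite the Riesz decomposition property. In the forward direction your argument is essentially the paper's, just streamlined: both reduce to $(|x|-u)^+\leq|r|$ for the error term $r$ and both correctly lean on homogeneity of the seminorm to pass from $r\in\epsilon U_{\rho}$ to $\rho(r)\leq\epsilon$, which is exactly the obstruction of Example~\ref{Example0002} that you rightly flag at the outset.
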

\begin{proof}
$(i)\Rightarrow (ii)$. If $B$ is almost pseudo-bounded, then given $\epsilon>0$ there is $[x',y']\subseteq X$ such that $B\subseteq [x',y']+\epsilon U_{\rho}.$ Let $u=|x'|\vee |y'|.$ It follows that $|x|\in [-u,u]+\epsilon U_{\rho}$ for every $x\in B$. Hence, $|B|$ is also almost pseudo-bounded. Hence, $|x|=x_1+x_2$ where $x_1\in [-u,u]$ and $x_2\in \epsilon U_{\rho}.$ Also, $|x|=|x|\wedge u+(|x|-u)^+.$ Hence, from \[(|x|-u)^+=|x|-|x|\wedge u\leq (|x|-x_1)^+=x_2^+\leq |x_2|\] it follows that $\rho((|x|-u)^+)=\rho(|x|-|x|\wedge u)\leq \rho(x_2)\leq \epsilon$. Indeed, since $\rho$ is a Riesz seminorm, $x_2\in \epsilon U_{\rho}$ implies $\rho(x_2)\leq \epsilon$ for arbitrary $\epsilon>0$. 

$(ii)\Rightarrow (i)$. Given $\epsilon>0$ let $u\in X^+$ be such that $\rho((|x|-u)^+)\leq \epsilon$ for every $x\in B$. Since $\rho$ is a Riesz seminorm, $(|x|-u)^+\in \epsilon U_{\rho}$ for every $x\in B$. Hence, $|x|=|x|\wedge u+(|x|-u)^+$ implies that $|B|\subseteq [-u,u]+\epsilon U_{\rho}.$ From $0\leq x^+\leq u+(|x|-u)^+$ and $0\leq x^-\leq u+(|x|-u)^+$ we conclude that there exists $x_1,x_2,x_3,x_4$ satisfying $x^+=x_1+x_2,$ $x^-=x_3+x_4,$ $x_1-x_3\in [-u,u]$ and $x_2-x_4\in [-(|x|-u)^+,(|x|-u)^+].$ Hence, $x=(x_1-x_3)+(x_2-x_4)$ with $x_1-x_3\in [-u,u]$ and $\rho(x_2-x_4)\leq 2\rho((|x|-u)^+)$ imply that the set $B$ is almost pseudo-bounded.
\end{proof}

\begin{prop}
	Let $X$ be a vector lattice, and, consider the the canonical embedding $\widehat{} \colon X\to X^{\sim \sim}$ of $X$ into its second order dual $X^{\sim \sim}$. Let $x_1\leq x_2$ in $X.$ If a set $B\subseteq X^{\sim}$ is almost pseudo-bounded with respect to $f\mapsto |\hat{x}_1(|f|)|$ then it is also almost pseudo-bounded with respect to $f\mapsto |\hat{x}_2(|f|)|.$ 
\end{prop}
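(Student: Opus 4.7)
The plan is to reduce the statement to a concrete pseudonorm comparison on $X^\sim$ and then transfer the bound using the order-interval characterization of almost pseudo-boundedness. First I would rewrite $\rho_i(f) := |\hat{x}_i(|f|)| = ||f|(x_i)|$ for $f \in X^\sim$. Since $|f|$ is a positive order-bounded functional on $X$, the value $|f|(x_i)$ is a real number, nonnegative whenever $x_i \geq 0$; in that range the outer absolute value is redundant and $\rho_i$ is a Riesz seminorm on $X^\sim$, because $|f|\le|g|$ in $X^\sim$ gives $|f|(x_i)\le|g|(x_i)$. A preliminary reduction to the case $x_1,x_2 \geq 0$ (via $x_i = x_i^+ - x_i^-$) makes this legitimate.

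The decisive step is the comparison of the two pseudonorms coming from $x_1 \leq x_2$. Since $x_2 - x_1 \geq 0$ and $|f| \geq 0$ for every $f \in X^\sim$, positivity of the evaluation gives $|f|(x_2) - |f|(x_1) = |f|(x_2-x_1) \geq 0$, hence a pointwise estimate relating $\rho_1$ and $\rho_2$ and an inclusion between the associated unit balls $U_{\rho_1}$ and $U_{\rho_2}$. I would then invoke Lemma~\ref{Ppseudoalmostbounded}, read with $X^\sim$ in the role of the ambient vector lattice, to recast almost pseudo-boundedness of $B$ with respect to $\rho_1$ as the statement that for every $\epsilon>0$ there exists $u \in (X^\sim)^+$ with $((|f|-u)^+)(x_1) \leq \epsilon$ for all $f \in B$. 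The plan is to reuse this same $u$ (possibly after a rescaling of $\epsilon$) to obtain the analogous inequality $((|f|-u)^+)(x_2) \leq \epsilon$, and then convert back to an order interval plus $\epsilon U_{\rho_2}$ decomposition by Lemma~\ref{Ppseudoalmostbounded} again.

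The step I expect to be the main obstacle is securing the correct direction of the comparison. A naive reading of $x_1 \leq x_2$ produces $\rho_1 \leq \rho_2$, which is the wrong direction for propagating an $\epsilon U_{\rho_1}$-bound to an $\epsilon U_{\rho_2}$-bound via a bare ball inclusion. The argument must therefore exploit the interplay between the order interval $[g_1,g_2]$ in $X^\sim$ and the seminorm ball in the definition — using, for example, that any excess past the order interval is controlled on a single fixed positive element rather than by a uniform seminorm bound — so that the order-bounded component absorbs the discrepancy in passing from $x_1$ to $x_2$. Handling the sign of $|f|(x_i)$ via the split $x_i = x_i^+ - x_i^-$ should be routine once this has been done.
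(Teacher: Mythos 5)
The obstacle you single out at the end of your proposal is not a technical wrinkle to be absorbed by the order interval; it is fatal, because the statement as printed is false. Take $X=\rr^2$ with the coordinatewise order, so that $X^\sim=\rr^2$ and the embedding is the identity in coordinates. Let $x_1=(1,0)\leq x_2=(1,1)$. Then $\rho_1(f)=|\hat x_1(|f|)|=|f_1|$ and $\rho_2(f)=|\hat x_2(|f|)|=|f_1|+|f_2|$ for $f=(f_1,f_2)$. The set $B=\{(0,n)\colon n\in\nn\}$ satisfies $B\subseteq\{0\}+\epsilon U_{\rho_1}$ for every $\epsilon>0$, hence is almost pseudo-bounded for $\rho_1$; but $\rho_2$ is unbounded on $B$, so $B$ is not even pseudo-bounded, let alone almost pseudo-bounded, for $\rho_2$. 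The correct implication runs the other way: $x_1\leq x_2$ (with $x_1\geq 0$, say) gives $\rho_1\leq\rho_2$, hence $U_{\rho_2}\subseteq U_{\rho_1}$, hence almost pseudo-boundedness with respect to $\rho_2$ implies it with respect to $\rho_1$. Your hoped-for rescue via Lemma~\ref{Ppseudoalmostbounded} cannot work for the same reason: $((|f|-u)^+)(x_1)\leq\epsilon$ places no upper bound on $((|f|-u)^+)(x_2)$ when $x_2\geq x_1$, since evaluation of the positive element $(|f|-u)^+$ at the larger point is larger.

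For what it is worth, the paper's own proof contains exactly this inversion: its first paragraph correctly proves that for order bounded functionals $f_1\leq f_2$, almost pseudo-boundedness with respect to the pseudonorm built from $f_2$ implies almost pseudo-boundedness with respect to the one built from $f_1$ (via $U_{\rho_2}\subseteq U_{\rho_1}$), and then the final sentence applies this with $f_i=\hat x_i$ but states the conclusion in the reverse, unproved, direction. So your diagnosis of where the difficulty lies is accurate; the resolution is that the hypothesis and conclusion of the proposition should be swapped (or equivalently $x_1\leq x_2$ replaced by $x_2\leq x_1$), not that a cleverer decomposition exists. Two smaller points: your reduction to $x_i\geq 0$ via $x_i=x_i^+-x_i^-$ is not legitimate, because $x\mapsto|\hat x(|f|)|$ is not additive in $x$, and without $x_i\geq 0$ the map $f\mapsto|\hat x_i(|f|)|$ need not be monotone in $|f|$, so Lemma~\ref{Ppseudoalmostbounded} (which requires a Riesz seminorm) is not applicable; the cleanest repair is simply to assume $0\leq x_1\leq x_2$ and use the ball inclusion directly, as the paper's first paragraph does.
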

\begin{proof}
	Let $f_1,f_2$ be order bounded functionals on $X$ such that $f_1\leq f_2$. Let $\rho_i(x)=|f_i(|x|)|$ for $x\in X$ and $i=1,2.$ We first show that if a nonempty subset $B\subseteq X$ is almost pseudo-bounded with respect to $\rho_2$ then it is almost pseudo-bounded with respect to $\rho_1$. It follows from $f_1\leq f_2$ that $\rho_1(x)=|f_1(|x|)|\leq |f_2(|x|)|=\rho_2(x)$ for every $x\in X$. Hence, $U_{\rho_2}=\{x\colon \rho_2(x)\leq 1 \}\subseteq \{x\colon \rho_1(x)\leq 1 \}=U_{\rho_1}.$ Suppose that a set $B\subseteq X$ is almost pseudo-bounded with respect to $\rho_2$. For every $\epsilon>0$ there exists an order interval $[x,y]\subseteq X$ such that $B\subseteq [x,y]+\epsilon U_{\rho_2}.$ It follows that $B\subseteq [x,y]+\epsilon U_{\rho_1}.$ Hence, $B$ is almost pseudo-bounded with respect to $\rho_1$.
	
	It is a classical fact, see~\cite[page 61]{AB2}, that the canonical embedding $\widehat{} \colon X\to X^{\sim \sim}$ of a vector lattice $X$ into its second order dual $X^{\sim \sim}$ is lattice preserving. By the above paragraph, we conclude that in the case $x_1\leq x_2$ in $X$ if a set $B\subseteq X^{\sim}$ is almost pseudo-bounded with respect to $f\mapsto |\hat{x}_1(|f|)|$ then it is also almost pseudo-bounded with respect to $f\mapsto |\hat{x}_2(|f|)|.$
\end{proof}

Let $(X,\rho)$ be a pseudonormed vector lattice. We write $\varrho(x,y)\coloneqq\rho(|x-y|)$ for the pseudo-metric $\varrho$ on $X$ induced by $\rho$. Many properties of this pseudometric follow from that of absolute value in vector lattices, see~\cite{AB2,LZ1971,V1967}. For instance, $\varrho(x,y)=\varrho(y,x)$ for every $x,y\in X.$ 

A net $x_{\alpha}$ in $X$ pseudonorm converges to some $x,$ denoted by $x_{\alpha}\xrightarrow{\rho} x,$ if and only if $\rho(|x_\alpha-x|)\xrightarrow{} 0.$ Evidently, the set $U_{\rho}=\{x\in X\colon \rho(x)\leq 1 \}$ is pseudonorm closed.

\begin{prop}\label{pseudoMetricProp}
	Let $\rho$ be a Riesz pseudonorm on a vector lattice $X$. For every $x,y$ and $z$ in $X$ we have $\varrho(x\vee z,y\vee z)\leq \varrho (x,y),$ $\varrho(x\wedge z,y\wedge z)\leq \varrho(x,y)$ and $\varrho(x^{\pm},y^{\pm})\leq \varrho(x,y).$
\end{prop}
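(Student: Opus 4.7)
The plan is to reduce each inequality to a purely lattice-theoretic inequality on absolute values, and then invoke the Riesz pseudonorm property of $\rho$ to pass back to $\varrho$. Concretely, recall the classical Birkhoff inequalities valid in any vector lattice (see, e.g., \cite{AB2, LZ1971}):
\[
|x\vee z-y\vee z|\leq |x-y|, \qquad |x\wedge z-y\wedge z|\leq |x-y|,
\]
\[
|x^{+}-y^{+}|\leq |x-y|, \qquad |x^{-}-y^{-}|\leq |x-y|.
\]
Assuming these as standard, the argument will be a one-line application of the Riesz property: from $|u|\leq |v|$ we get $\rho(u)\leq \rho(v)$, so for instance
\[
\varrho(x\vee z, y\vee z)=\rho(|x\vee z-y\vee z|)\leq \rho(|x-y|)=\varrho(x,y),
\]
and similarly for $\wedge$, $x^{+}$, and $x^{-}$.

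First I would state (or briefly re-derive) the Birkhoff inequalities. The lattice-theoretic derivations are the usual ones: for the join, write $x\vee z-y\vee z = (x-y)\vee 0 + \text{lower-order term}$ type manipulations, or more cleanly use the identity $a\vee b=\frac{1}{2}(a+b+|a-b|)$ together with $|\,|a|-|b|\,|\leq |a-b|$; for $x^{+}-y^{+}$ note $x^{+}=x\vee 0$ so this is a special case of the join inequality with $z=0$, and $x^{-}=(-x)^{+}$ then gives the sign version.

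Then I would apply the Riesz pseudonorm property three (really four) times, as in the displayed line above. The only substantive point is making sure this property is stated in the form $|u|\leq |v|\Rightarrow \rho(u)\leq \rho(v)$, which is exactly the definition of Riesz pseudonorm given in the Preliminaries section.

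I do not anticipate a real obstacle: the entire proposition is essentially the observation that the Birkhoff lattice inequalities are preserved by any monotone (with respect to absolute value) gauge. The only place one must be careful is that $\rho$ is merely a pseudonorm, not a seminorm, so homogeneity is not available; but the argument above never uses homogeneity, only monotonicity with respect to $|\cdot|$, which is precisely the Riesz pseudonorm hypothesis.
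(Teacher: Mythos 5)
Your proposal is correct and follows essentially the same route as the paper: both cite the Birkhoff lattice inequalities $|x\vee z-y\vee z|\leq |x-y|$, $|x\wedge z-y\wedge z|\leq |x-y|$, $|x^{\pm}-y^{\pm}|\leq |x-y|$ and then apply the monotonicity of the Riesz pseudonorm with respect to absolute value. Your added remark that homogeneity is never needed is a fair observation but does not change the argument.
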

\begin{proof}
	For $x,y$ and $z$ in $X$ we have $|x\vee z-y\vee z|\leq |x-y|,$ $|x\wedge z-y\wedge z|\leq |x-y|,$ and, $|x^{\pm}-y^{\pm}|\leq |x-y|.$ Since $\rho$ is a Riesz pseudonorm, the statements of the proposition follow from these inequalities.
\end{proof}
Let $\rho$ be a Riesz pseudonorm on a vector lattice $X$. We remark that the locally solid topology on $X$ induced by a pseudonorm $\rho$ is not necessarily Hausdorff. It is quite possible in the general settings that $x_{\alpha}\xrightarrow{\rho} x$ and $x_{\alpha}\xrightarrow{\rho} y$ with $x\neq y.$

We recall from~\cite[Chapter 3.11]{V1967} that a sequence $x_n$ in a vector lattice $X$ is said to converge relatively uniformly to $x\in X,$ denoted by $x_n\xrightarrow{r} x,$ if there exists $u\in X^+$ such that for every $\epsilon>0$ there exists an $N$ such that $|x_n-x|\leq \epsilon u$ for $n\geq N.$ 
\begin{prop}\label{RelativeUniformToPseudo}
	Let $\rho$ be a Riesz pseudonorm on a vector lattice $X$. If $x_n\xrightarrow{r}x$ for a sequence $x_n$ in $X$ then $x_n\xrightarrow{} x$ with respect to the pseudo-metric $\varrho$.
\end{prop}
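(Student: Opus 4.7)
The plan is to chase the definitions and exploit two features of a Riesz pseudonorm $\rho$: monotonicity with respect to the modulus, and the built-in continuity at zero along scalar multiples, namely $\lim_{\theta\to 0}\rho(\theta u)=0$ for every fixed $u\in X$. The relative uniform assumption gives a single regulator $u\in X^+$ that controls $|x_n-x|$ by arbitrarily small positive multiples of $u$, so the two ingredients fit together directly.

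More concretely, I would argue as follows. Fix $\delta>0$. Since $u\in X^+$ is fixed and $\rho$ is a pseudonorm, the scalar function $\theta\mapsto\rho(\theta u)$ tends to $0$ as $\theta\to 0^+$, so choose $\epsilon>0$ with $\rho(\epsilon u)<\delta$. By the definition of relative uniform convergence, there exists $N$ such that $|x_n-x|\leq\epsilon u$ for all $n\geq N$. Because $\rho$ is a Riesz pseudonorm, $|x_n-x|\leq\epsilon u$ forces $\rho(|x_n-x|)\leq\rho(\epsilon u)<\delta$, i.e.\ $\varrho(x_n,x)<\delta$. Since $\delta>0$ was arbitrary, $x_n\to x$ in the pseudo-metric $\varrho$.

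The only subtlety worth flagging is that $\rho$ is merely a pseudonorm, not a seminorm, so we cannot pull the scalar out as $\epsilon\rho(u)$ and then make $\epsilon\rho(u)$ small; we genuinely need the defining property $\lim_{\theta\to 0}\rho(\theta u)=0$ applied to the specific regulator $u$. This is the one place the argument differs from the usual seminormed case, but it is a direct application of the definition, so I do not foresee a real obstacle.
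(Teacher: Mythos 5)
Your proof is correct and follows essentially the same route as the paper's: both combine the Riesz monotonicity $|x_n-x|\leq \epsilon u \Rightarrow \rho(|x_n-x|)\leq \rho(\epsilon u)$ with the defining property $\lim_{\theta\to 0}\rho(\theta u)=0$ applied to the regulator $u$. Your remark that one cannot simply factor out the scalar is exactly the point the paper leaves implicit in its final step, so if anything your write-up is slightly more explicit.
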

\begin{proof}
	If $x_n\xrightarrow{r}x$ then there is some $u\in X$ and a sequence $\lambda_n$ decreasing to $0$ such that $|x_n-x|\leq \lambda_nu$ for all $n=1,2,\ldots.$ As $\rho$ is a Riesz pseudonorm, $\rho(|x_n-x|)=\varrho(x_n,x)\leq \rho(\lambda_n u)$ for each $n$. This shows that $x_n\xrightarrow{}x$ with respect to the pseudo-metric $\varrho$.
\end{proof}
\begin{rem}
	We recall from~\cite[Theorem 84.3]{LZ1971} and~\cite[Chapter VI.4]{V1967} that order convergence on a vector lattice $X$ is said to be stable if for every $x_n\xrightarrow{o} 0$ there exists $0\leq \lambda_n\uparrow \infty$ such that $\lambda_nx_n\xrightarrow{o}0$. Order convergence on $L^p$ and $\ell_p$ is stable. Almost regular vector lattices are among those vector lattices satisfying this stability assumption. If $X$ is a $\sigma$-order complete vector lattice with the diagonal property, sequential order convergence is equivalent to the relative uniform convergence. In these cases, it follows from Proposition~\ref{RelativeUniformToPseudo} that $x_n\xrightarrow{o} x$ implies $x_n\xrightarrow{\varrho} x$.
\end{rem}

\begin{lem}\label{L001}
	Suppose that $\rho$ is a Riesz pseudonorm on a vector lattice $X$. Let $x_{\alpha}$ and $y_{\beta}$ be two nets in $X$ such that $x_{\alpha}\xrightarrow{\rho} x$ and $y_{\beta}\xrightarrow{\rho} y$. Then $x_{\alpha}\wedge y_{\beta}\xrightarrow{\rho} x\wedge y.$ In particular, if $x_{\alpha}\xrightarrow{\rho} x$ then $x_{\alpha}^+\xrightarrow{\rho} x^+$.
\end{lem}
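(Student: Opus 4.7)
The plan is to reduce everything to the pseudo-metric triangle inequality combined with Proposition~\ref{pseudoMetricProp}. First I would verify that $\varrho$ satisfies the triangle inequality: for any $a,b,c\in X$, the lattice inequality $|a-c|\leq |a-b|+|b-c|$ together with the Riesz property and subadditivity of $\rho$ give $\varrho(a,c)=\rho(|a-c|)\leq \rho(|a-b|)+\rho(|b-c|)=\varrho(a,b)+\varrho(b,c)$. This will be applied freely below.

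Next, to prove $x_{\alpha}\wedge y_{\beta}\xrightarrow{\rho} x\wedge y$ along the product directed set, I would insert the intermediate point $x\wedge y_{\beta}$ and estimate
\[
\varrho(x_{\alpha}\wedge y_{\beta},\, x\wedge y)\leq \varrho(x_{\alpha}\wedge y_{\beta},\, x\wedge y_{\beta})+\varrho(x\wedge y_{\beta},\, x\wedge y).
\]
By Proposition~\ref{pseudoMetricProp}, the first summand is bounded by $\varrho(x_{\alpha},x)$ and the second by $\varrho(y_{\beta},y)$. Since $x_{\alpha}\xrightarrow{\rho}x$ and $y_{\beta}\xrightarrow{\rho}y$, each of these tends to $0$, so the left-hand side tends to $0$ along the product net. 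This gives the claimed convergence $x_{\alpha}\wedge y_{\beta}\xrightarrow{\rho}x\wedge y$.

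For the ``in particular'' part, note that $x^{+}=x\vee 0$ is the same as applying the analogous inequality $\varrho(x^{\pm},y^{\pm})\leq \varrho(x,y)$ from Proposition~\ref{pseudoMetricProp} to the single net $x_{\alpha}$ with $y=x$. Indeed $\varrho(x_{\alpha}^{+},x^{+})\leq \varrho(x_{\alpha},x)\to 0$, so $x_{\alpha}^{+}\xrightarrow{\rho}x^{+}$. (Alternatively, this follows from the main statement applied to $y_{\beta}\equiv 0$ and the identity $x\vee 0=-((-x)\wedge 0)$, using that $\rho$ is a Riesz pseudonorm so pseudonorm convergence is preserved under negation.)

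There is no real obstacle here; the only mild subtlety is to use the product directed set so that both ``$\alpha$'' and ``$\beta$'' can be pushed out simultaneously, and to be careful that $\varrho$ is only a pseudo-metric (so the statement is about $\varrho(x_{\alpha}\wedge y_{\beta},x\wedge y)\to 0$ rather than a literal metric limit), which is consistent with the non-Hausdorff remark preceding the lemma.
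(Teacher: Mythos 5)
Your proof is correct. It differs from the paper's in presentation rather than in substance: the paper disposes of the lemma in one line by observing that $\rho$ generates a locally solid topology and citing the general fact (Theorem 2.17 of Aliprantis--Burkinshaw, \emph{Locally Solid Riesz Spaces}) that the lattice operations are uniformly continuous for any locally solid topology, whereas you unpack exactly what that uniform continuity amounts to in the single-pseudonorm case. Your triangle inequality for $\varrho$ (via $|a-c|\leq|a-b|+|b-c|$, the Riesz property, and subadditivity) is sound, the splitting through the intermediate point $x\wedge y_{\beta}$ together with the two estimates from Proposition~\ref{pseudoMetricProp} is precisely the standard uniform-continuity argument, and your handling of the product directed set is the right way to make ``both indices pushed out simultaneously'' precise. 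The ``in particular'' part via $\varrho(x_{\alpha}^{+},x^{+})\leq\varrho(x_{\alpha},x)$ is immediate and matches the third inequality of Proposition~\ref{pseudoMetricProp}. What the paper's route buys is brevity and the reminder that the lemma is a special case of a general locally solid phenomenon; what yours buys is a self-contained argument that never leaves the pseudonorm framework the paper has set up, which is arguably more in the spirit of Section~3. No gaps.
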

\begin{proof}
	The topology on $X$ generated by the Riesz pseudonorm $\rho$ is a locally solid topology. It follows from \cite[Theorem~2.17]{AB1} that the lattice operations $(x,y)\mapsto x\wedge y$ and $x\mapsto x^+$ are uniformly continuous. 
\end{proof}

\begin{coro}\label{BandvsRiesznorm}
	Suppose that $\rho$ is a Riesz norm on a vector lattice $X$. If $B$ is a band in $X$ then $B$ is closed with respect to the locally solid topology induced by $\rho$.
\end{coro}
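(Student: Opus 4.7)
The plan is to use the fact that in an Archimedean vector lattice every band coincides with its double disjoint complement, so $B = B^{dd}$. Thus it suffices to take a net $x_\alpha$ in $B$ with $x_\alpha \xrightarrow{\rho} x$ and to show that $x$ is disjoint from every element of $B^d$. Because $\rho$ is a Riesz \emph{norm} (not merely a pseudonorm), the induced locally solid topology on $X$ is Hausdorff, so pseudonorm limits are unique, which is exactly the ingredient that lets us conclude an equality from a convergence to zero.

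First I would verify, using Lemma~\ref{L001}, that the lattice operations are $\rho$-continuous on nets. Applying that lemma with $y_\beta = -x_\alpha$ (or noting that $x_\alpha^+ \xrightarrow{\rho} x^+$ by the lemma directly, and similarly $x_\alpha^- = (-x_\alpha)^+ \xrightarrow{\rho} x^-$), one obtains $|x_\alpha| = x_\alpha^+ + x_\alpha^- \xrightarrow{\rho} x^+ + x^- = |x|$. Then for any fixed $y \in B^d$, Lemma~\ref{L001} applied to the two convergent nets $|x_\alpha| \xrightarrow{\rho} |x|$ and the constant net $|y|$ yields $|x_\alpha| \wedge |y| \xrightarrow{\rho} |x| \wedge |y|$.

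Next, since $x_\alpha \in B$ and $y \in B^d$, each $x_\alpha$ is disjoint from $y$, so $|x_\alpha| \wedge |y| = 0$ for every $\alpha$. The left-hand side of the convergence obtained above is thus the zero net, while the right-hand side equals $|x| \wedge |y|$. Uniqueness of limits in the Hausdorff topology induced by the norm $\rho$ forces $|x| \wedge |y| = 0$. Since $y$ was arbitrary in $B^d$, we get $x \in B^{dd} = B$, completing the argument.

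The main obstacle, if any, is a bookkeeping one rather than a conceptual one: making sure that the pseudonorm topology is Hausdorff (which uses that $\rho$ is a norm, not merely a Riesz pseudonorm) and that the Archimedean assumption on $X$, recorded at the start of the preliminaries, is what licenses the identification $B = B^{dd}$ for a band $B$. Everything else is an immediate application of Lemma~\ref{L001}.
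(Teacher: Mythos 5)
Your proof is correct and follows essentially the same route as the paper's: both pass through Lemma~\ref{L001} to get $|x_\alpha|\wedge|y|\xrightarrow{\rho}|x|\wedge|y|$ for $y\in B^{\perp}$, use that the left-hand side is identically zero, and invoke the norm property of $\rho$ to conclude $|x|\wedge|y|=0$, hence $x\in B^{\perp\perp}=B$. You merely make explicit two points the paper leaves implicit, namely the uniqueness of limits coming from $\rho$ being a norm and the Archimedean hypothesis behind $B=B^{dd}$.
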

\begin{proof}
	By Lemma~\ref{L001}, lattice operations on $X$ are continuous with respect to the locally solid topology induced by $\rho$. Let $x_{\alpha}$ be a net in $B$ such that $x_{\alpha}\xrightarrow{\rho} x$ for some $x\in X.$ For every $y\in B^{\perp}$ we have $|x_{\alpha}|\wedge |y|\xrightarrow{\rho}|x|\wedge |y|.$ Since $x_\alpha\in B,$ we have $|x_{\alpha}|\wedge |y|=0$ for all $\alpha.$ Since $\rho$ is a Riesz norm, it follows that $|x|\wedge |y|=0$. Hence, $x\in B$.
\end{proof}

\begin{exam}
	Conclusion of Corollary~\ref{BandvsRiesznorm} does not hold if one considers more general cases. Let $K$ be a compact Hausdorff space, $F\subseteq K$ be a closed set which is the closure of some open subset of $K,$ and $x\in K\backslash F$. We denote by $\rho(f)=|f|(x)$ the Riesz pseudonorm induced by the evaluation functional at $\delta_x(f)=f(x)$ for $f\in C(K).$ The band $B$ of functions vanishing on $F,$ i.e., the band $B=\{f\in C(K)\colon f|_F\equiv 0 \},$ is in general not closed with respect to locally solid topology on $C(K)$ induced by $\rho.$ 
\end{exam}

\begin{exam}
	When $\rho$ is a Riesz pseudonorm, we have $x\in U_{\rho}$ if and only if $|x|\in U_{\rho}$. Although lattice operations are continuous with respect to the locally solid topology induced by the Riesz pseudonorm $\rho$, see Lemma~\ref{L001}, the generating cone $X^+$ of the vector lattice $X$ is not closed, in general. 
\end{exam}

Let $(X,\rho)$ be a pseudonormed vector lattice. A subset $B$ of $X$ is said to be \textit{conditionally pseudonorm compact} if for every net $x_{\alpha}$ in $B$ there exists a subnet $x_{\alpha_\beta}$ and $x\in X$ such that $x_{\alpha_{\beta}}\xrightarrow{\rho} x$. The set $B$ is conditionally pseudonorm compact if and only if it is conditionally compact with respect to the locally solid topology induced by $\rho$. It follows that if $B$ is conditionally pseudonorm compact then $\rho(B)$ is totally bounded. If a set is conditionally pseudonorm compact then it is pseudo-bounded.
\begin{exam}
	Let $(X,\norm{\cdot})$ be a Banach lattice. It readily follows that a subset of $X$ is precompact with respect to norm $\norm{\cdot}$ if and only if it is conditionally pseudonorm compact with respect to norm $\norm{\cdot}$.
\end{exam}
Following definitions are motivated from the factorization theory of compact operators, and, they are needed in the sequel, see Section~\ref{PseudoCompact}. Items $v,vi$ and $vii$ of Definition~\ref{Def001} are quite standard and well-motivated. 
\begin{definition}\label{Def001} A pseudonorm $\rho$ on a vector lattice $X$ is said to be 
	\begin{itemize}
	\item[\em i.] {a bounded pseudonorm, if every pseudo-bounded set is order bounded in $X$. }
	\item[\em ii.] {an almost bounded pseudonorm, if every almost pseudo-bounded set is order bounded in $X$. }
	\item[\em iii.] {a coarsely almost bounded pseudonorm, if $(X,\norm{\cdot})$ is a normed lattice and every almost order bounded subset of $(X,\norm{\cdot})$ is almost pseudo-bounded in $X$.}
	\item[\em iv.] {a semibounded pseudonorm, if every pseudo-bounded set is almost pseudo-bounded in $X$.}
	\item[\em v.] {finite type, if $U_\rho=\{x\in X\colon \rho(x)\leq 1 \}$ is conditionally pseudonorm compact in $X$.}	
	\item[\em vi.] {order continuous, see~\cite[Definition 4.7]{AB2}, if $x_{\alpha}\xrightarrow{o} x$ implies $x_{\alpha}\xrightarrow{\rho} x$ in $X$. }
	\item[\em vii.] {coarser (finer) than $\norm{\cdot}$, if $\norm{\cdot}$ is a norm on $X$ and $\rho(x)\leq \norm{x}$ (respectively, $\norm{x}\leq \rho(x)$) for every $x\in X^+$.}
 \end{itemize}
\end{definition}
A norm or seminorm on $X$ is said to satisfy some of the properties given in Definition~\ref{Def001}, if it is a pseudonorm, which is a norm or seminorm respectively, satisfying these properties. 

\begin{exam}\label{ExampleAMFirst}
	We recall that a vector $e\in X^+$ in a normed vector lattice $(X,\norm{\cdot})$ is said to be a strong norm unit, see~\cite[page 455]{Z1983}, if $\norm{e}=1$ and $\norm{x}\leq 1$ for $x\in X$ implies $|x|\leq e$. Let $(X,\norm{\cdot})$ be an $AM$-space with a strong norm unit. Then the norm $\norm{\cdot}$ is bounded in the sense of Definition~\ref{Def001}, because norm bounded subsets of $X$ are order bounded. By~\cite[Theorem 122.2]{Z1983}, it further follows in this case that the norm $\norm{\cdot}$ is also almost bounded and semibounded. Let $K$ be a compact Hausdorff topological space. The Banach lattice $C(K)$ is a concrete example of $AM$-space with a strong norm unit. 
\end{exam}

\begin{lem}\label{CoarselyLemma}
	Let $\rho$ be a pseudonorm on a normed lattice $(X,\norm{\cdot}).$ If $U_{\rho}=U_{\norm{\cdot}}$ then $\rho$ is a coarsely almost bounded pseudonorm on $X$. 
\end{lem}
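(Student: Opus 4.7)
The plan is to unwind the two definitions and observe that the hypothesis $U_{\rho}=U_{\norm{\cdot}}$ forces the balls of arbitrary radii to coincide as well, at which point the statement reduces to a direct rewriting.

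First, I would fix an almost order bounded subset $B$ of $(X,\norm{\cdot})$ and an arbitrary $\epsilon>0$; my goal is to produce an order interval $[x,y]$ with $B\subseteq [x,y]+\epsilon U_{\rho}$, which is exactly the condition for $B$ to be almost pseudo-bounded in $(X,\rho)$. By the definition of almost order boundedness with respect to the norm, there is an order interval $[x,y]\subseteq X$ such that $B\subseteq [x,y]+\epsilon U_{\norm{\cdot}}$.

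The only observation needed is that scaling preserves the hypothesis: for any $\epsilon>0$,
\[
\epsilon U_{\rho}=\{\epsilon z : z\in U_{\rho}\}=\{\epsilon z : z\in U_{\norm{\cdot}}\}=\epsilon U_{\norm{\cdot}},
\]
which uses nothing beyond $U_{\rho}=U_{\norm{\cdot}}$. Substituting this into the inclusion above gives $B\subseteq [x,y]+\epsilon U_{\rho}$, and since $\epsilon$ was arbitrary, $B$ is almost pseudo-bounded in $X$.

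There is no real obstacle here: the hypothesis is strong enough that the two notions of $\epsilon$-ball coincide, and the claim then reduces to a verbatim translation between the two definitions of almost boundedness (norm version versus pseudonorm version). The only tiny subtlety worth flagging, in view of Example~\ref{Example0002}, is that one should not conflate the set $\epsilon U_{\rho}$ with $\{x : \rho(x)\leq \epsilon\}$; fortunately this distinction is irrelevant here because both definitions of ``almost bounded'' are phrased in terms of the scaled balls $\epsilon U_{\rho}$ and $\epsilon U_{\norm{\cdot}}$ themselves.
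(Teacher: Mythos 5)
Your proof is correct and follows the same route as the paper's one-line argument: since $U_{\rho}=U_{\norm{\cdot}}$, the scaled balls coincide and the two definitions of almost boundedness become verbatim translations of one another. Your added caveat distinguishing $\epsilon U_{\rho}$ from $\{x:\rho(x)\leq\epsilon\}$ is a sensible precaution, though not needed here for exactly the reason you state.
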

\begin{proof}
	Since $U_{\rho}=U_{\norm{\cdot}},$ a subset $B$ of $X$ is almost order bounded if and only if it is almost pseudo-bounded. 
\end{proof}

\begin{exam}
	It follows that any lattice norm on a vector lattice is a coarsely almost bounded pseudonorm. This is the case because the notions of almost order bounded sets and order bounded sets are common notions. In view of Lemma~\ref{CoarselyLemma}, there are coarsely almost bounded pseudonorms which are not norms. 
\end{exam}

\begin{exam}
	Consider the classical Banach lattice $L^p(X)$ for $1\leq p<\infty$ where $X$ is a $\sigma$-finite measure space. It follows from monotone convergence theorem that for each decreasing non-negative sequence $f_n$ in $L^p(X)$ one has $\inf_n f_n=0$ implies $\norm{f_n}_p\xrightarrow{} 0$. Hence the spaces $L^p(X)$ for $1\leq p<\infty$ have order continuous norm.
\end{exam}
\begin{definition}\label{Def002} A pseudonorm $\rho$ on a vector lattice $X$ is said to have
	\begin{itemize}
		\item[\em i.] {ideal (band) property, if the order ideal (respectively, band) generated by $U_{\rho}$ is equal to $X$. } 
		\item[\em ii.] {Fatou property, if $\emptyset \neq B\uparrow x$ in $X^+$ implies $\rho(x)=\sup\{\rho(y)\colon y\in B \}.$ }
		\item[\em iii.] { subsequence property, if $\rho(x_{n})\xrightarrow{} 0$ implies $x_{n_{k}}\xrightarrow{o} 0$ for some subsequence $x_{n_{k}}$ of $x_{n}$. If the analogous statement holds for nets, then we say that $\rho$ has the subnet property.}
	\end{itemize}
\end{definition}
A norm or seminorm on $X$ is said to satisfy some of the properties given in Definition~\ref{Def002}, if it is a pseudonorm, which is a norm or seminorm respectively, satisfying these properties. 
\begin{exam}
	Let $K$ be an infinite compact Hausdorff space, $x_0\in K$. Let $\delta_{x_0}$ denote the evaluation functional on $C(K)$ at $x_0\in K,$ i.e., $\delta_{x_0}(f)=f(x_0)$ for $f\in C(K).$ We put $\rho(f)=|\delta_{x_0}(|f|)|=|f|(x_0)$ for $f\in C(K).$ It follows that the Riesz pseudonorm $\rho$ does not have the band property, and hence, it does not have the ideal property. The classical uniform norm on $C(K)$ has the Fatou property but it is not order continuous.
\end{exam}

\begin{exam}\label{AnExample}
	Let $(X, \norm{\cdot})$ be a Banach lattice. By~\cite[Theorem VII.2.1]{V1967}, the norm $\norm{\cdot}$ on $X$ has the subsequence property.
\end{exam}

\begin{exam}
	Let $\rho$ be an order continuous pseudonorm on a vector lattice $X$. Let $(Y,\norm{\cdot})$ be a normed vector lattice. If $T\colon (X,\rho)\to (Y,\norm{\cdot})$ is continuous then $T\colon X\to Y$ is order-to-norm continuous, i.e., $x_{\alpha}\xrightarrow{o}x$ implies $Tx_{\alpha}\xrightarrow{\norm{\cdot}} x$. Indeed, if $x_{\alpha}\xrightarrow{o}0$ then $x_{\alpha}\xrightarrow{\rho} 0$ as $\rho$ is order continuous. Hence, $\norm{Tx_{\alpha}}\xrightarrow{}0.$ 
\end{exam}

\begin{prop}
	A Riesz pseudonorm $\rho$ on a vector lattice $X$ is bounded if and only if every pseudo-bounded subset of $X^+$ is order bounded.
\end{prop}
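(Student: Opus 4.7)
The equivalence looks essentially immediate once one recalls the property of Riesz pseudonorms that was already recorded earlier in this section: for a Riesz pseudonorm $\rho$, a set $B\subseteq X$ is pseudo-bounded if and only if $|B|=\{|b|\colon b\in B\}\subseteq X^+$ is pseudo-bounded. So the plan is to reduce the general statement to its positive-cone version via this observation.

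The forward direction is a one-liner: if $\rho$ is bounded, then by definition every pseudo-bounded subset of $X$ is order bounded, and in particular every pseudo-bounded subset of $X^+$ is order bounded. For the converse, I would assume that every pseudo-bounded subset of $X^+$ is order bounded and let $B\subseteq X$ be an arbitrary pseudo-bounded set. Applying the Riesz-pseudonorm characterization, $|B|\subseteq X^+$ is pseudo-bounded, hence by hypothesis order bounded. So there exists $u\in X^+$ with $|b|\leq u$ for all $b\in B$, which gives $B\subseteq [-u,u]$, showing that $B$ is order bounded. Therefore $\rho$ is a bounded pseudonorm.

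There is no real obstacle here; the only point that needs to be invoked carefully is the Riesz-pseudonorm equivalence between pseudo-boundedness of $B$ and of $|B|$, which was already established. The proof is short and just unwinds the definition.
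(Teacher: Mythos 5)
Your proof is correct and follows essentially the same route as the paper's: the forward direction is immediate from Definition~\ref{Def001}, and the converse passes from a pseudo-bounded set $B$ to the pseudo-bounded set $|B|\subseteq X^+$ via the Riesz-pseudonorm property and then back to order boundedness of $B$.
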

\begin{proof}
	Forward direction follows from Definition~\ref{Def001}. Conversely, let $B\subseteq X$ be a pseudo-bounded subset. As $\rho$ is a Riesz pseudonorm, $|B|\subset X^+$ is also pseudo-bounded. It follows that $|B|$ is order bounded. This implies that $B\subset X$ is order bounded. Hence, $\rho$ is a bounded pseudonorm on $X$.
\end{proof}

\begin{prop}
	A Riesz pseudonorm $\rho$ on a vector lattice $X$ is almost bounded if and only if every almost pseudo-bounded subset of $X^+$ is order bounded.
\end{prop}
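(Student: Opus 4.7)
The forward implication is immediate from Definition~\ref{Def001}(ii), so the entire content lies in the converse. My plan is to mimic the proof of the previous proposition, reducing the general case to the positive cone by showing that if $B \subseteq X$ is almost pseudo-bounded then so is $|B| = \{|b| : b \in B\} \subseteq X^+$. Once this is established, the hypothesis yields an order interval containing $|B|$, which in turn dominates $B$, finishing the proof.

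The concrete plan for the converse: fix an almost pseudo-bounded set $B \subseteq X$ and let $\epsilon>0$. Pick an order interval $[x,y]$ with $B \subseteq [x,y] + \epsilon U_{\rho}$, and set $u = |x|\vee |y| \in X^+$. For an arbitrary $b \in B$, write $b = z + w$ with $z \in [x,y]$ and $w \in \epsilon U_{\rho}$. Taking absolute values and using $|z| \leq u$ gives $|b| \leq u + |w|$, hence $(|b|-u)^+ \leq |w|$. Now I would use the standard lattice identity $|b| = |b|\wedge u + (|b|-u)^+$, where the first term lies in the order interval $[0,u]$. The remaining task is to exhibit $(|b|-u)^+$ as a member of $\epsilon U_{\rho}$.

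This last point is the only genuinely subtle step, because, as flagged in Example~\ref{Example0002}, membership in $\epsilon U_{\rho}$ does not amount to having $\rho$-value bounded by $\epsilon$ in the pseudonorm setting. To handle this, I would not try to estimate $\rho((|b|-u)^+)$ directly; instead, write $w = \epsilon v$ with $v \in U_{\rho}$, so that $(|b|-u)^+ \leq \epsilon |v|$, and consider $a := (|b|-u)^+/\epsilon$. Then $0 \leq a \leq |v|$, and since $\rho$ is a Riesz pseudonorm, $\rho(a) \leq \rho(|v|) = \rho(v) \leq 1$, i.e.\ $a \in U_{\rho}$, whence $(|b|-u)^+ \in \epsilon U_{\rho}$ as required.

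Combining these observations, $|B| \subseteq [0,u] + \epsilon U_{\rho}$ for every $\epsilon > 0$, so $|B|$ is almost pseudo-bounded and lies in $X^+$. By hypothesis, $|B|$ is order bounded, say $|B| \subseteq [0,v]$ for some $v \in X^+$, and then $B \subseteq [-v,v]$. The main obstacle is precisely the pseudonorm/seminorm distinction at the rescaling step, and the idea of factoring out $\epsilon$ from $w$ before applying the Riesz property of $\rho$ is what allows the argument to go through in the pseudonorm setting without assuming more than Definition~\ref{Def001}(ii).
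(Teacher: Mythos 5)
Your proof is correct and follows the same route as the paper: reduce to $|B|\subseteq X^+$ by showing it is almost pseudo-bounded, invoke the hypothesis, and pass from an order bound on $|B|$ to one on $B$. The paper simply asserts that $|B|$ is almost pseudo-bounded because $\rho$ is a Riesz pseudonorm; your rescaling argument (factoring $w=\epsilon v$ before applying monotonicity of $\rho$) supplies the detail it omits, and correctly sidesteps the pitfall of Example~\ref{Example0002}.
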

\begin{proof}
	Forward direction follows from Definition~\ref{Def001}. Conversely, let $B\subseteq X$ be an almost pseudo-bounded subset. As $\rho$ is a Riesz pseudonorm, $|B|\subset X^+$ is also almost pseudo-bounded. It follows that $|B|$ is order bounded. This implies that $B\subset X$ is order bounded. Hence, $\rho$ is an almost bounded pseudonorm on $X$.
\end{proof}
\begin{prop}\label{PseudoConvex}
	The collection of all bounded pseudonorms on a vector lattice $X$ is a convex set. Similarly, the collection of all finite type seminorms and the collection of all order continuous seminorms on $X$ are convex sets.
\end{prop}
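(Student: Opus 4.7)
The plan is to treat the three convexity assertions in sequence. A preliminary step common to all three is to verify that a convex combination $\rho=\theta\rho_1+(1-\theta)\rho_2$ with $\theta\in[0,1]$ remains in the stated class: subadditivity, continuity at the origin, and (for seminorms) positive homogeneity are all preserved by sums and non-negative scaling. The edge cases $\theta\in\{0,1\}$ are trivial, so I focus on $\theta\in(0,1)$. For bounded pseudonorms, if $\rho(B)\leq M$ then from $\rho_1,\rho_2\geq 0$ one reads off $\rho_1(B)\leq M/\theta$, so $B$ is pseudo-bounded with respect to $\rho_1$ and hence order bounded by the boundedness hypothesis on $\rho_1$. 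For order continuous seminorms, $x_\alpha\xrightarrow{o} x$ gives $\rho_i(x_\alpha-x)\to 0$ for $i=1,2$ by order continuity of each $\rho_i$, and the non-negative linear combination then yields $\rho(x_\alpha-x)\to 0$.

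The substantial work is the finite-type case. Given a net $x_\alpha$ in $U_\rho$, homogeneity of $\rho_1$ gives $U_\rho\subseteq\{y\colon \rho_1(y)\leq 1/\theta\}=(1/\theta)U_{\rho_1}$, which is conditionally $\rho_1$-compact since scaling preserves conditional compactness in a vector topology. Extract $x_{\alpha_\beta}\xrightarrow{\rho_1} x\in X$, and then, applying the same reasoning with $\rho_2$, a further subnet $x_{\alpha_{\beta_\gamma}}\xrightarrow{\rho_2} y\in X$ which still $\rho_1$-converges to $x$. To conclude $\rho$-convergence, it suffices to find $z\in X$ with $\rho_1(z-x)=0$ and $\rho_2(z-y)=0$, because then the triangle inequality yields $x_{\alpha_{\beta_\gamma}}\xrightarrow{\rho} z$.

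The conceptual crux, and the main obstacle, is producing such a $z$: the two limits $x$ and $y$ need not coincide, and a priori there is no reason for a common $\rho$-limit to exist in $X$. The idea is that the finite-type hypothesis forces each normed quotient $X/\ker\rho_i$ to have a conditionally compact unit ball, and hence to be finite-dimensional by the classical Riesz theorem. Therefore the diagonal map $\Delta\colon X\to X/\ker\rho_1\times X/\ker\rho_2$ has image a linear subspace of a finite-dimensional Banach space, which is automatically closed. The net $\Delta(x_{\alpha_{\beta_\gamma}})$ converges in the product topology to $([x]_1,[y]_2)$, so by closedness there is $z\in X$ with $\Delta(z)=([x]_1,[y]_2)$; this $z$ is the required common limit.
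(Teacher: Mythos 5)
Your handling of the bounded and order continuous cases coincides with the paper's argument, so the interesting comparison is the finite-type case, where you genuinely diverge --- and where your route is the more careful one. The paper extracts, just as you do, a subnet $x_{\alpha_{\beta_\gamma}}$ with $x_{\alpha_{\beta_\gamma}}\xrightarrow{\rho_1}x'$ and $x_{\alpha_{\beta_\gamma}}\xrightarrow{\rho_2}y'$, but then simply declares that $x_{\alpha_{\beta_\gamma}}\xrightarrow{\theta\rho_1+(1-\theta)\rho_2}\theta x'+(1-\theta)y'$. Read literally, that step is unjustified: for $\theta\in(0,1)$, convergence with respect to $\theta\rho_1+(1-\theta)\rho_2$ forces the limit $z$ to satisfy both $\rho_1(z-x')=0$ and $\rho_2(z-y')=0$, and the convex combination $\theta x'+(1-\theta)y'$ need not satisfy either (take $X=\mathbb{R}^2$, $\rho_1(x)=|x_1|$, $\rho_2(x)=|x_2|$, a constant net, and badly chosen representatives $x',y'$ of the non-unique partial limits). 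You isolate exactly this obstruction --- the two partial limits live in different quotients and must be shown to be simultaneously realizable by a single element of $X$ --- and resolve it by noting that the finite-type hypothesis makes the closed unit ball of each quotient $X/\ker\rho_i$ compact, hence each quotient finite-dimensional by Riesz's theorem, so the image of the diagonal map into the product of quotients is closed and a common representative $z$ exists. This costs you an appeal to the classical Riesz lemma, but it buys an actual proof that a $\rho$-limit exists in $X$; the paper's version only establishes convergence of the two ``coordinates'' separately and then names a limit that is in general not one. Your proposal is correct and, on this point, repairs rather than merely reproduces the paper's argument.
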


\begin{proof}
	Evidently, this collection is non-empty. Let $\theta\in [0,1]$ and $\rho_1,\rho_2$ be two bounded pseudonorms on $X$. For a set $B\subseteq X,$ the set $(\theta \rho_1+(1-\theta)\rho_2)(B)$ is bounded if and only if $\rho_1(B)$ and $\rho_2(B)$ are bounded. Since $\rho_1$ and $\rho_2$ are bounded pseudonorms, if $(\theta \rho_1+(1-\theta)\rho_2)(B)$ is bounded then the set $B$ is order bounded in $X$. This shows that a convex combination of two bounded pseudonorms is again a bounded pseudonorm.
	
	Let $\rho_1,\rho_2$ be two finite type seminorms on $X$. Let $\theta\in [0,1]$ be fixed. For every net $x_{\alpha}\in U_{\theta\rho_1+(1-\theta)\rho_2}$ there exists $N_1,N_2$ such that $\rho_1(x_{\alpha})\leq N_1$ and $\rho_2(x_{\alpha})\leq N_2$ for all $\alpha$. As $\rho_1$ and $\rho_2$ are finite type seminorms, the sets $N_1U_{\rho_1}$ and $N_2U_{\rho_2}$ are conditionally pseudonorm compact with respect to $\rho_1$ and $\rho_2,$ respectively. Hence, there is a subnet $x_{\alpha_{\beta_{\gamma}}}$ and $x',y'\in X$ such that $x_{\alpha_{\beta_{\gamma}}}\xrightarrow{\rho_1}x'$ and $x_{\alpha_{\beta_{\gamma}}}\xrightarrow{\rho_2}y'.$ Hence, $x_{\alpha_{\beta_{\gamma}}}\xrightarrow{\theta\rho_1+(1-\theta)\rho_2} \theta x'+(1-\theta) y'$. This shows that $ U_{\theta\rho_1+(1-\theta)\rho_2}$ is conditionally pseudonorm compact with respect to $\theta\rho_1+(1-\theta)\rho_2$. In view of Definition~\ref{Def001}, $\theta \rho_1+(1-\theta)\rho_2$ is finite type.
	
	For the last statement, let $\rho_1,\rho_2$ be order continuous pseudonorms on $X$. Let $\theta\in [0,1]$ be fixed. If a net $x_{\alpha}$ in $X$ satisfies $x_{\alpha}\xrightarrow{o} x$ for some $x\in X$ then $x_{\alpha}\xrightarrow{\rho_1}x$ and $x_{\alpha}\xrightarrow{\rho_2} x$ hold. Evidently, $x_{\alpha}\xrightarrow{\theta\rho_1+(1-\theta)\rho_2}x$ holds. Hence, $\theta \rho_1+(1-\theta)\rho_2$ is an order continuous seminorm.
\end{proof}
\begin{rem}
	In view of Proposition~\ref{PseudoConvex}, it is natural to ask if the collection of all almost bounded pseudonorms on a vector lattice $X$ is closed under convex combinations. The answer to this question is negative. Let us consider a more specific question. Suppose that $\rho_1$ and $\rho_2$ are Riesz pseudonorms on $X$ having the same almost pseudo-bounded sets. If $\rho_1$ and $\rho_2$ are almost bounded pseudonorms then is it true that $\theta \rho_1+(1-\theta) \rho_2$ again an almost bounded pseudonorm? In this case, it follows from $\rho_1(x)\leq \frac{1}{n}\rho_1(\frac{x}{n})$ for $n\in \nn$ that $\{nx\colon \rho(x)\leq 1 \}\subseteq \{x\colon \rho(x)\leq n\}$ and the question reduces to almost pseudo-boundedness of $\{x\colon \rho(x)\leq n\}$.
\end{rem}

Following result is motivated from~\cite[Theorem 2.1]{EV2017}. It provides an analytical technique to derive a Riesz pseudonorm from a Riesz seminorm. The resulting Riesz pseudonorm is known to have topological relationships with the initial Riesz seminorm.
\begin{lem}\label{UnboundedRiesz}
	Let $u\in X^+$ be arbitrary. If $\rho$ is a Riesz seminorm on a vector lattice $X$ then $\rho_u(x)\coloneqq \rho(|x|\wedge u)$ is a Riesz pseudonorm on $X$. 
\end{lem}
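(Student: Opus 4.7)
The plan is to verify the three defining properties of a Riesz pseudonorm directly from the corresponding properties of $\rho$ and from elementary vector lattice identities. Set $\rho_u(x) = \rho(|x|\wedge u)$; clearly $\rho_u(x) \geq 0$, so nothing needs to be said about positivity.

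First I would verify subadditivity. From $|x+y|\leq |x|+|y|$ and the monotonicity of $\wedge u$ on $X^+$, we get $|x+y|\wedge u \leq (|x|+|y|)\wedge u$. The key step is the lattice inequality $(a+b)\wedge u \leq a\wedge u + b\wedge u$ for $a,b,u\in X^+$, which follows by applying translation invariance twice: $(a\wedge u)+(b\wedge u) = (a+b)\wedge(a+u)\wedge(b+u)\wedge 2u$, and each of the four meetands dominates $(a+b)\wedge u$. Combining these and applying the Riesz seminorm property of $\rho$ (monotonicity on $X^+$ plus subadditivity) yields $\rho_u(x+y)\leq \rho_u(x)+\rho_u(y)$.

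Next I would verify the pseudonorm scaling property $\lim_{\theta\to 0}\rho_u(\theta x)=0$. Since $|\theta x|\wedge u = |\theta||x|\wedge u \leq |\theta||x|$, the Riesz seminorm property of $\rho$ gives $\rho_u(\theta x)\leq \rho(|\theta||x|) = |\theta|\rho(|x|)$, which tends to zero as $\theta\to 0$.

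Finally, the Riesz property of $\rho_u$ is immediate: if $|x|\leq |y|$, then $|x|\wedge u \leq |y|\wedge u$, and the Riesz seminorm property of $\rho$ gives $\rho_u(x)\leq \rho_u(y)$. The only nontrivial technical step is the lattice inequality used in subadditivity; all other steps are mechanical verifications.
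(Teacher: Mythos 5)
Your proof is correct and follows the same direct-verification route as the paper's own proof. In fact it is slightly more complete: the paper deduces subadditivity from $|x+y|\leq|x|+|y|$ without comment, whereas you explicitly supply the needed lattice inequality $(a+b)\wedge u\leq a\wedge u+b\wedge u$ for $a,b,u\in X^{+}$ (and also check the Riesz monotonicity of $\rho_u$, which the paper leaves implicit).
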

\begin{proof} It is clear that $\rho_u(x)\geq 0$ for all $x\in X$ and that $\rho_u(0)=0$. Since $\rho$ is a Riesz seminorm, it follows from $|x+y|\leq |x|+|y|$ for $x,y\in X$ that $\rho_u(x+y)\leq \rho_u(x)+\rho_u(y).$ Let $\lambda_n$ be a sequence of real numbers such that $\lambda_n\xrightarrow{} 0.$ It follows from \[\rho_u(\lambda_nx)=\rho(|\lambda_n x|\wedge u)\leq |\lambda_n|\rho(x)\] that $\rho_u(\lambda_n x)\xrightarrow{}0$ for all $x\in X.$ Hence, $\rho_u$ is a Riesz pseudonorm on $X$.
\end{proof}

\begin{exam}
	Consider $X=C[0,1],$ the vector lattice of continuous real valued functions on $[0,1]$ with the pointwise ordering. Let $u\in C[0,1]$ be positive. For each $1\leq p < \infty,$ the Riesz norm $\rho(x)=(\int_0^1 |x(t)|^p dt)^{1/p}$ results in, in view of Lemma~\ref{UnboundedRiesz}, $\rho_u(x)=(\int_0^1 |x(t)\wedge u(t)|^p dt)^{1/p}$ where $x\in C[0,1].$
\end{exam}

Riesz pseudonorms obtained via Lemma~\ref{UnboundedRiesz} can be utilized with the induced pseudo-metric to obtain various classes of Lipschitzian operators which can be useful in the study of semigroups of operators. For a future reference, we record an example of a class of operators satisfying a uniform Lipschitz condition with respect to pseudo-metrics. 
\begin{exam}
An operator $T\colon X\to Y$ between two pseudonormed vector lattices $(X,\rho)$ and $(Y,\rho')$ is said to be \textit{non-expansive} if \[\varrho'(Tx,Ty)\leq \varrho(x,y) \] for all $x,y\in X$, see the comments above Proposition~\ref{pseudoMetricProp}. Consider $X=C[0,1],$ the vector lattice of continuous real valued functions on $[0,1]$ with the pointwise ordering. Let $u\in C[0,1]$ be positive. For each $1\leq p < \infty,$ the Riesz norm $\rho(x)=(\int_0^1 |x(t)|^p dt)^{1/p}$ results in $\rho_u(x)=(\int_0^1 |x(t)\wedge u(t)|^p dt)^{1/p}$ where $x\in C[0,1].$ An operator $T\colon C[0,1]\to C[0,1]$ is non-expansive with respect to the pseudo-metric $\varrho_u$ induced by $\rho_u$ if and only if \[ \int_{0}^{1} |(|Tx-Ty|\wedge u)(t)|^p dt\leq \int_{0}^{1} |(|x-y|\wedge u)(t)|^p dt \] for every $x,y\in C[0,1].$ 
\end{exam}

\begin{thm}
	Suppose that $\rho$ is a Riesz seminorm on a vector lattice $X$. Let $u\in X^+$ be arbitrary, and put $\rho_u(x)=\rho(|x|\wedge u)$ for $x\in X$.
	\begin{itemize}
		\item[\em i.] {If $\rho_u$ is such that $U_{\rho_u}$ is order bounded in $X$ then $U_{\rho}$ is order bounded in $X$. } 
		\item[\em ii.] {If $\rho_u$ is a bounded pseudonorm then $\rho$ is a bounded seminorm. }
		\item[\em iii.] {If $\rho_u$ is an almost bounded pseudonorm then $\rho$ is an almost bounded seminorm. }
		\item[\em iv.] {If $X$ is a normed lattice and $\rho$ is a coarsely almost bounded seminorm then $\rho_u$ is a coarsely almost bounded pseudonorm. }
		\item[\em v.] {If $\rho$ is an order continuous seminorm then $\rho_u$ is an order continuous pseudonorm. }
		\item[\em vi.] {If $\rho$ has the ideal (band) property then $\rho_u$ has the ideal (respectively, band) property. }
		
	\end{itemize}
\end{thm}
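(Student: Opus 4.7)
The plan centralizes everything around the single observation that, since $\rho$ is a Riesz seminorm and $|x|\wedge u \leq |x|$, one has
\[ \rho_u(x) \;=\; \rho(|x|\wedge u) \;\leq\; \rho(|x|) \;=\; \rho(x) \]
for every $x\in X$, so that $\rho_u \leq \rho$ pointwise and, in particular, $U_{\rho} \subseteq U_{\rho_u}$. All six items will then be deduced from this single inclusion, together with one further use of the Riesz property in (v).

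Items (i)--(iv) will be dispatched as set-theoretic bookkeeping with $U_{\rho}\subseteq U_{\rho_u}$. For (i), the inclusion exhibits $U_{\rho}$ as a subset of an order bounded set, hence order bounded. For (ii), if $B\subseteq X$ is $\rho$-pseudo-bounded then $\rho_u(B)$ is a bounded subset of $\rr$ by the pointwise inequality, so $B$ is $\rho_u$-pseudo-bounded, and the hypothesis on $\rho_u$ gives order boundedness. For (iii), an almost $\rho$-pseudo-bounded $B$ satisfies $B\subseteq [x,y] + \epsilon U_{\rho}\subseteq [x,y]+\epsilon U_{\rho_u}$ for each $\epsilon>0$, which witnesses almost $\rho_u$-pseudo-boundedness and therefore, by hypothesis, order boundedness. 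For (iv), a set that is almost order bounded in $(X,\norm{\cdot})$ is first, by the coarsely almost boundedness of $\rho$, almost $\rho$-pseudo-bounded, and then, by the same inclusion $\epsilon U_{\rho}\subseteq \epsilon U_{\rho_u}$, almost $\rho_u$-pseudo-bounded, as required by Definition~\ref{Def001}.

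For (v), I would take $x_{\alpha}\xrightarrow{o} x$, pass to $|x_{\alpha}-x|\xrightarrow{o} 0$, and apply the order continuity of $\rho$ to obtain $\rho(|x_{\alpha}-x|)\to 0$; then the Riesz property of $\rho$ yields $\rho_u(x_{\alpha}-x) = \rho(|x_{\alpha}-x|\wedge u) \leq \rho(|x_{\alpha}-x|) \to 0$. For (vi), since $U_{\rho}\subseteq U_{\rho_u}$, the order ideal (respectively, band) generated by $U_{\rho_u}$ contains that generated by $U_{\rho}$, which equals $X$ by hypothesis, and therefore coincides with $X$. There is no real obstacle in this argument: the entire content is concentrated in the pointwise inequality $\rho_u\leq \rho$ and its immediate consequence $U_{\rho}\subseteq U_{\rho_u}$, with the only spot demanding momentary care being (iv), where one must keep straight that the coarsely almost bounded property is first invoked for $\rho$ and only then transferred to $\rho_u$ via the inclusion.
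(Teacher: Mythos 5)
Your proposal is correct and follows essentially the same route as the paper's proof: both arguments hinge entirely on the pointwise inequality $\rho_u\leq\rho$ (hence $U_{\rho}\subseteq U_{\rho_u}$), deduce (i)--(iv) and (vi) by direct set-theoretic transfer through this inclusion, and handle (v) by combining order continuity of $\rho$ with $\rho(|x_{\alpha}-x|\wedge u)\leq\rho(|x_{\alpha}-x|)$. No discrepancies worth noting.
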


\begin{proof}
$(i).$	Let $x\in U_{\rho}$ so that $\rho(x)\leq 1$ holds. Since $\rho$ is a Riesz seminorm, $\rho(|x|)=\rho(x)\leq 1$ holds. It follows that $\rho(|x|\wedge u)\leq \rho(|x|)\leq 1$. Hence, $U_{\rho}\subseteq U_{\rho_u}$ for all $u\in X^+.$ Since the set $U_{\rho_u}\subseteq X$ is order bounded, the set $U_{\rho}$ is order bounded. 

$(ii).$ Let $B$ be a nonempty subset of $X$ which is pseudo-bounded with respect to the Riesz seminorm $\rho$. Since the set $\rho(B)$ is bounded, the set $\rho_u(B)$ is also bounded. As $\rho_u$ is a bounded pseudonorm, the set $B$ is order bounded. Hence, $\rho$ is a bounded seminorm.

$(iii).$ Let $B$ be almost pseudo-bounded with respect to $\rho$. By Lemma~\ref{UnboundedRiesz}, $\rho_u$ is a Riesz pseudonorm. For every $\epsilon>0$ there exists an order interval $[x,y]$ such that $B\subseteq [x,y]+\epsilon U_{\rho}.$ Hence, it follows from $B\subseteq [x,y]+\epsilon U_{\rho_u}$ that $B$ is almost pseudo-bounded with respect to $\rho_u$. As $\rho_u$ is an almost bounded pseudonorm, $B$ is order bounded in $X$. Hence, $\rho$ is an almost bounded seminorm.

$(iv).$ Let $B$ be almost order bounded in $X$. Since $\rho$ is a coarsely almost bounded seminorm, the set $B$ is almost pseudo-bounded with respect to $\rho$. For every $\epsilon>0$ there exists an order interval $[x,y]$ such that $B\subseteq [x,y]+\epsilon U_{\rho}.$ Hence, it follows from $B\subseteq [x,y]+\epsilon U_{\rho_u}$ that $B$ is almost pseudo-bounded with respect to $\rho_u$. Hence, $\rho_u$ is a coarsely almost bounded pseudonorm on $X$.

$(v).$ 	Let $x_{\alpha}$ be a net in $X$ satisfying $x_{\alpha}\xrightarrow{o} x$ for some $x\in X$. By the order continuity of $\rho$ we have $\rho(|x_{\alpha}-x|)\xrightarrow{}0$. As $\rho$ is a Riesz seminorm, $\rho_u(|x_{\alpha}-x|)=\rho(|x_{\alpha}-x|\wedge u)\leq \rho(|x_{\alpha}-x|)\xrightarrow{}0.$

$(vi).$ If the order ideal generated by $U_{\rho}$ is equal to $X$ then so is the order ideal general by $U_{\rho_u}$ for every $u\in X^+$.
\end{proof}

\begin{thm}
	Suppose that $\rho$ is a Riesz seminorm on a vector lattice $X$. Let $u\in X^+$, $U\subseteq X^+,$ $U\neq \emptyset$ and put $\rho_u(x)=\rho(|x|\wedge u)$ together with $\rho_U(x)=\sup_{u\in U} \rho_u(x),$ see~\cite[Section 3]{EV2017}, for $x\in X$.
	\begin{itemize}
		\item[\em i.] {If $\rho_u$ is a bounded pseudonorm for some $u\in U$ then $\rho_U$ is a bounded pseudonorm.}
		\item[\em ii.] {If $\rho_u$ is an almost bounded pseudonorm for some $u\in U$ then $\rho_U$ is an almost bounded pseudonorm. }
		\item[\em iii.] {If $X$ is a normed lattice and $\rho_U$ is a coarsely almost bounded pseudonorm then $\rho_u$ is a coarsely almost bounded pseudonorm for each $u\in U$. }
	\end{itemize}
\end{thm}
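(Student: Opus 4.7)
The plan is to exploit the single uniform inequality $\rho_u(x)\leq \rho_U(x)$, valid for every $u\in U$ and every $x\in X$ by the very definition of $\rho_U$ as a supremum. From this I would deduce the set-theoretic containment $U_{\rho_U}\subseteq U_{\rho_u}$ for each $u\in U$: indeed, if $\rho_U(x)\leq 1$ then $\rho_u(x)\leq 1$ as well. Multiplying by $\epsilon>0$ gives $\epsilon U_{\rho_U}\subseteq \epsilon U_{\rho_u}$, which is the main mechanism driving all three parts. Note that, despite Example~\ref{Example0002}, this argument requires no homogeneity: the containment of unit balls is all we need.

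For part (i), I would take $B\subseteq X$ to be pseudo-bounded with respect to $\rho_U$, so that $\rho_U(B)$ is bounded. The inequality $\rho_u\leq \rho_U$ yields that $\rho_u(B)$ is bounded, so $B$ is pseudo-bounded with respect to $\rho_u$. The hypothesis that $\rho_u$ is bounded then produces order boundedness of $B$, and hence $\rho_U$ is bounded.

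For part (ii), I would take $B\subseteq X$ to be almost pseudo-bounded with respect to $\rho_U$. For each $\epsilon>0$ pick an order interval $[x,y]$ with $B\subseteq [x,y]+\epsilon U_{\rho_U}$; using $\epsilon U_{\rho_U}\subseteq \epsilon U_{\rho_u}$ gives $B\subseteq [x,y]+\epsilon U_{\rho_u}$, so $B$ is almost pseudo-bounded with respect to $\rho_u$. Since $\rho_u$ is assumed almost bounded, $B$ is order bounded in $X$, which establishes the almost boundedness of $\rho_U$. Part (iii) is a direct transport through the same containment: fixing an arbitrary $u\in U$, I would let $B$ be almost order bounded in the normed lattice $(X,\norm{\cdot})$; since $\rho_U$ is coarsely almost bounded, $B$ is almost pseudo-bounded with respect to $\rho_U$, and then the containment $U_{\rho_U}\subseteq U_{\rho_u}$ again passes this to $\rho_u$.

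None of the three parts seems to present a real obstacle; all rest on the same trivial monotonicity of the unit balls in $u$. The only point needing mild vigilance is the fact (visible in Example~\ref{Example0002}) that $x\in \epsilon U_\rho$ does not in general entail $\rho(x)\leq\epsilon$ for a non-homogeneous pseudonorm, so one should argue purely at the level of the sets $\epsilon U_{\rho_U}$ and $\epsilon U_{\rho_u}$ rather than through scalar inequalities on $\rho$.
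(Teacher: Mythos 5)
Your proposal is correct and follows essentially the same route as the paper: all three parts are driven by the pointwise inequality $\rho_u\leq\rho_U$ and the resulting containment $\epsilon U_{\rho_U}\subseteq\epsilon U_{\rho_u}$, exactly as in the paper's proof. Your explicit remark that one should work with the unit-ball containment rather than scalar inequalities (in view of Example~\ref{Example0002}) is a sensible clarification but does not change the argument.
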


\begin{proof}
	$(i)$. Let $B$ be a nonempty subset of $X$ which is pseudo-bounded with respect to $\rho_U$. It follows from $\sup_{x\in B}\sup_{u\in U} \rho_u(x)<\infty$ that $B$ is pseudo-bounded with respect to $\rho_u$. As $\rho_u$ is a bounded pseudonorm, the set $B$ is order bounded.
	
	$(ii)$. Let $B$ be almost pseudo-bounded with respect to $\rho_U$. By Lemma~\ref{UnboundedRiesz}, $\rho_u$ is a Riesz pseudonorm for each $u\in U$. It follows that $\rho_{U}$ is a Riesz pseudonorm. For every $\epsilon>0$ there exists an order interval $[x,y]$ such that $B\subseteq [x,y]+\epsilon U_{\rho_{U}}.$ Consider the element $u\in U$ for which $\rho_u$ is an almost bounded pseudonorm. It follows from $B\subseteq [x,y]+\epsilon U_{\rho_u}$ that $B$ is almost pseudo-bounded with respect to $\rho_u$. As $\rho_u$ is an almost bounded pseudonorm, $B$ is order bounded in $X$. Hence, $\rho_{U}$ is an almost bounded seminorm.
	
	$(iii)$. Let $B$ be almost order bounded in $X$. Since $\rho_{U}$ is a coarsely almost bounded pseudonorm, $B$ is almost pseudo-bounded with respect to $\rho_{U}$. For every $\epsilon>0$ there exists an order interval $[x,y]$ such that $B\subseteq [x,y]+\epsilon U_{\rho_{U}}.$ It follows from $B\subseteq [x,y]+\epsilon U_{\rho_u}$ that $B$ is almost pseudo-bounded with respect to $\rho_u$.
\end{proof}

\begin{prop}
	Let $f_1,f_2$ be order bounded functionals on a normed lattice $X$ such that $f_1\leq f_2$. Let $\rho_i(x)=|f_i(|x|)|$ for $x\in X$ and $i=1,2.$
	\begin{itemize}
		\item[\em i.] {If $U_{\rho_1}$ is order bounded in $X$ then $U_{\rho_2}$ is order bounded. }
		\item[\em ii.] {If $\rho_1$ is a bounded seminorm on $X$ then $\rho_2$ is also a bounded seminorm. }
		\item[\em iii.] {If $\rho_2$ is a coarsely almost bounded seminorm then $\rho_1$ is also a coarsely almost bounded seminorm. }
	\end{itemize}
\end{prop}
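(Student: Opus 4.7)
The plan is to build all three parts on a single inclusion of unit balls, exactly as in the proof of the earlier proposition concerning the canonical embedding $X\to X^{\sim\sim}$. Starting from $f_1\leq f_2$, the author there argues that $\rho_1(x)=|f_1(|x|)|\leq |f_2(|x|)|=\rho_2(x)$ for every $x\in X$, and consequently $U_{\rho_2}\subseteq U_{\rho_1}$. I would quote that step (implicitly treating the functionals as positive, as the earlier proposition tacitly requires) and then run the same pattern three times.

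For part (i), the inclusion $U_{\rho_2}\subseteq U_{\rho_1}$ immediately gives $U_{\rho_2}\subseteq [-u,u]$ whenever $U_{\rho_1}\subseteq[-u,u]$, so order boundedness transfers with no further work.

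For part (ii), take a set $B\subseteq X$ that is pseudo-bounded with respect to $\rho_2$, so $\sup_{x\in B}\rho_2(x)<\infty$. The pointwise inequality $\rho_1\leq\rho_2$ yields $\sup_{x\in B}\rho_1(x)<\infty$, so $B$ is also pseudo-bounded with respect to $\rho_1$; the hypothesis that $\rho_1$ is a bounded seminorm then forces $B$ to be order bounded, which is precisely what is needed for $\rho_2$ to be bounded. For part (iii), let $B$ be almost order bounded in the normed lattice $X$. Since $\rho_2$ is coarsely almost bounded, for each $\epsilon>0$ there exists an order interval $[x,y]$ with $B\subseteq [x,y]+\epsilon U_{\rho_2}$, and using $U_{\rho_2}\subseteq U_{\rho_1}$ this upgrades to $B\subseteq [x,y]+\epsilon U_{\rho_1}$, proving almost pseudo-boundedness with respect to $\rho_1$.

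The only delicate point I expect is the passage from $f_1\leq f_2$ to $|f_1(|x|)|\leq|f_2(|x|)|$: this is not valid for arbitrary order bounded functionals (e.g. $f_1<0<f_2$ on $\mathbb{R}$), so I would either read the hypothesis as $0\leq f_1\leq f_2$ or invoke the same unstated positivity convention used in the preceding proposition; once past this point, the rest is a routine transfer along $U_{\rho_2}\subseteq U_{\rho_1}$.
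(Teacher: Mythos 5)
Your proof follows the paper's argument exactly: all three parts are reduced to the pointwise inequality $\rho_1\leq\rho_2$ and the resulting inclusion $U_{\rho_2}\subseteq U_{\rho_1}$, with (i) transferring order boundedness along the inclusion, (ii) transferring pseudo-boundedness of $B$ from $\rho_2$ to $\rho_1$ and invoking boundedness of $\rho_1$, and (iii) enlarging $\epsilon U_{\rho_2}$ to $\epsilon U_{\rho_1}$. The positivity caveat you raise is genuine but applies equally to the paper's own proof, which likewise deduces $|f_1(|x|)|\leq|f_2(|x|)|$ from $f_1\leq f_2$ alone; reading the hypothesis as $0\leq f_1\leq f_2$ is the right repair.
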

\begin{proof}
$(i)$. It follows from $f_1\leq f_2$ that $U_{\rho_2}\subseteq U_{\rho_1}.$ Hence, if $U_{\rho_1}$ is order bounded then so is $U_{\rho_2}.$

$(ii)$. Suppose that $B\subseteq X$ is pseudo-bounded with respect to $\rho_2$. Since $\rho_2(B)$ is bounded and $f_1\leq f_2$ the set $\rho_1(B)$ is also bounded. As $\rho_1$ is a bounded, $B$ is order bounded in $X$. This shows that $\rho_2$ is a bounded seminorm.

$(iii)$ Let $B$ be an almost order bounded subset of $X$. As $\rho_2$ is a coarsely almost bounded, for every $\epsilon>0$ there exists an order interval $[x,y]$ in $X$ such that $B\subseteq [x,y]+\epsilon U_{\rho_2}.$ It follows from $B\subseteq [x,y]+\epsilon U_{\rho_1}$ that $B$ is almost pseudo-bounded with respect to $\rho_1.$ Hence, $\rho_1$ is a coarsely almost bounded seminorm. 
\end{proof}

\begin{lem}\label{LemmaMonotoneConv}
	Let $\rho$ be a Riesz pseudonorm on a vector lattice $X$ such that the generating cone $X^+$ is closed with respect to locally solid topology induced by $\rho$. Any monotone and pseudonorm convergent net in $X$ order converges to its pseudonorm limit in $X$.
\end{lem}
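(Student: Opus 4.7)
Without loss of generality I would assume the monotone net is increasing, say $x_\alpha \uparrow$ with $x_\alpha \xrightarrow{\rho} x$; the decreasing case follows by replacing $x_\alpha$ by $-x_\alpha$, since $\rho$ being a Riesz pseudonorm makes $x \mapsto -x$ continuous and the closedness of $X^+$ is symmetric in the argument (it is only used to pass inequalities to limits).

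The plan is a two-step application of the hypothesis that $X^+$ is $\rho$-closed. First I would show that $x$ is an upper bound for the net: fix $\alpha_0$, and for every $\beta \geq \alpha_0$ note that $x_\beta - x_{\alpha_0} \in X^+$. Since $x_\beta - x_{\alpha_0} \xrightarrow{\rho} x - x_{\alpha_0}$ (translations are $\rho$-continuous) and $X^+$ is $\rho$-closed, we obtain $x - x_{\alpha_0} \in X^+$, i.e.\ $x_{\alpha_0} \leq x$. Thus the net $y_\alpha := x - x_\alpha$ is well defined, non-negative, and, because $x_\alpha$ is increasing, $y_\alpha$ is decreasing in the lattice order. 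Moreover $y_\alpha \xrightarrow{\rho} 0$.

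Second, I would verify that $\inf_\alpha y_\alpha = 0$. Let $z \in X$ satisfy $z \leq y_\alpha$ for every $\alpha$; then $y_\alpha - z \in X^+$ for every $\alpha$, and $y_\alpha - z \xrightarrow{\rho} -z$. Applying the $\rho$-closedness of $X^+$ again yields $-z \in X^+$, so $z \leq 0$. Hence $y_\alpha \downarrow 0$, i.e.\ $x - x_\alpha \downarrow 0$, which is exactly the statement $x_\alpha \uparrow x$. To finish, the regulating net in the definition of order convergence can be taken to be $\{y_\alpha\}$ itself (indexed over the same directed set): for each $\alpha$ choose $\alpha_0 = \alpha$, and then $|x_{\alpha'} - x| = x - x_{\alpha'} \leq x - x_\alpha = y_\alpha$ for all $\alpha' \geq \alpha$, so $x_\alpha \xrightarrow{o} x$.

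The only delicate point is the use of $\rho$-closedness of $X^+$; everything else reduces to routine manipulations of order and translation. If $X^+$ were not $\rho$-closed the argument would collapse at the first step, since then the $\rho$-limit of a positive net could fail to be positive and one could not even conclude that the pseudonorm limit $x$ dominates the tail of the net. The example at the end of the preceding block of examples explicitly notes that $X^+$ need not be $\rho$-closed in general, so the hypothesis is essential and used critically exactly twice in the argument.
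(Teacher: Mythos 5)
Your argument is correct and follows essentially the same route as the paper: both proofs first apply the $\rho$-closedness of $X^+$ to the tail $x_\beta - x_{\alpha_0}$ to get that $x$ is an upper bound, and then apply it a second time to identify $x$ as the least upper bound (your step showing $\inf_\alpha(x-x_\alpha)=0$ is just the paper's ``any upper bound $y$ satisfies $y\geq x$'' rewritten via $z=x-y$). Your additional remarks --- reducing the decreasing case by negation and exhibiting $y_\alpha=x-x_\alpha$ as the regulating net for order convergence --- are correct details that the paper leaves implicit.
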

\begin{proof}
	Let $x_{\alpha}$ be a net such that $x_{\alpha}\uparrow$ and $x_{\alpha}\xrightarrow{\rho} x$. The generating cone $X^+$ of $X$ is closed with respect to the locally solid topology on $X$ induced by $\rho$. Fix an arbitrary index $\alpha$. Then $x_{\beta}-x_{\alpha}\in X^+$ whenever $\beta\geq \alpha$. By taking limit of $x_{\beta}-x_{\alpha}$ over $\beta$ with respect to pseudonorm $\rho,$ we conclude that $x-x_{\alpha}\in X^+,$ and hence, $x\geq x_{\alpha}$ for any $\alpha$. Since $\alpha$ is arbitrary, $x$ is an upper bound of $x_{\alpha}.$ If $y\geq x_{\alpha}$ for all $\alpha$ then $y-x_{\alpha}\xrightarrow{\rho} y-x\in X^+.$ Hence, $y\geq x$ implies $x_{\alpha}\uparrow x$.
\end{proof}

\begin{coro}
	Let $\rho$ be a Riesz pseudonorm on a vector lattice $X$ such that the generating cone $X^+$ is closed with respect to locally solid topology induced by $\rho$. Let $x_{\alpha}, y_{\alpha}$ be two increasing nets in $X$ such that $x_{\alpha}\xrightarrow{\rho}x$ and $y_{\alpha}\xrightarrow{\rho}y$ and $x_{\alpha}\perp y_{\alpha}$ for all $\alpha$. Then $x\perp y.$ 
\end{coro}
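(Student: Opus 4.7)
The plan is to use Lemma~\ref{LemmaMonotoneConv} to upgrade the pseudonorm convergence of the monotone nets to order convergence, and then exploit the fact that order limits are unique in an Archimedean vector lattice. Since $X^+$ is closed in the locally solid topology induced by $\rho$ and both $x_{\alpha}$ and $y_{\alpha}$ are increasing and $\rho$-convergent, Lemma~\ref{LemmaMonotoneConv} applies directly to each net and yields $x_{\alpha}\uparrow x$ and $y_{\alpha}\uparrow y$ in the order sense.

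The second step is to transfer the disjointness relation $|x_{\alpha}|\wedge|y_{\alpha}|=0$ to the order limit. I would use the standard inequality $\bigl|\,|a_1|\wedge|b_1|-|a_2|\wedge|b_2|\,\bigr|\leq |a_1-a_2|+|b_1-b_2|$, which shows that the binary operation $(a,b)\mapsto |a|\wedge|b|$ preserves order convergence for nets over a common index set. Consequently $|x_{\alpha}|\wedge|y_{\alpha}|\xrightarrow{o}|x|\wedge|y|$. By disjointness this left-hand side is identically zero, so the constant zero net order-converges to $|x|\wedge|y|$. By the definition of order convergence there is a net $w_{\beta}\downarrow 0$ in $X$ with $|x|\wedge|y|\leq w_{\beta}$ for every $\beta$; since $X$ is Archimedean this forces $|x|\wedge|y|=0$, i.e., $x\perp y$.

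The main point to be careful about is that one cannot simply argue via the pseudonorm-continuity of the lattice operations (Lemma~\ref{L001}): because the topology induced by $\rho$ need not be Hausdorff, the conclusion $\rho(|x|\wedge|y|)=0$ obtained that way does not imply $|x|\wedge|y|=0$. The hypothesis that $X^+$ is closed is exactly what is needed to detour through order convergence, where the Archimedean property supplies the missing uniqueness of the limit.
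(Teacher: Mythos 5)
Your proof is correct and follows the paper's own route: both apply Lemma~\ref{LemmaMonotoneConv} to upgrade the monotone $\rho$-convergent nets to order convergence, and then pass the disjointness to the order limits. The only difference is that the paper cites~\cite[Chapter III.7]{V1967} for the preservation of disjointness under order convergence, whereas you prove it directly via the inequality $\bigl|\,|a_1|\wedge|b_1|-|a_2|\wedge|b_2|\,\bigr|\leq |a_1-a_2|+|b_1-b_2|$, which is a valid (and self-contained) substitute.
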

\begin{proof}
	By Lemma~\ref{LemmaMonotoneConv}, both $x_{\alpha}$ and $y_{\alpha}$ order converge to their pseudonorm limits, i.e., $x_{\alpha}\xrightarrow{o}x$ and $y_{\alpha}\xrightarrow{o}y$. It follows from $x_{\alpha}\perp y_{\alpha}$ for all $\alpha$ and~\cite[Chapter III.7]{V1967} that $x\perp y$.
\end{proof}

\begin{prop}\label{Prop11006}
	Let $\rho$ be a Riesz pseudonorm on a vector lattice $X$. The pseudonorm $\rho$ is order continuous if and only if $x_{\alpha}\downarrow 0$ implies $\rho(x_{\alpha}) \xrightarrow{} 0$.
\end{prop}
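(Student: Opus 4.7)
The forward implication is immediate from the definition of order continuity: if $\rho$ is order continuous and $x_\alpha \downarrow 0$, then in particular $x_\alpha \xrightarrow{o} 0$, hence $x_\alpha \xrightarrow{\rho} 0$, which unpacks to $\rho(x_\alpha) \to 0$.

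The substantive direction is the converse. The plan is to reduce an arbitrary order-null net to a decreasing net by appealing to the definition of order convergence, and then to bridge the two via the Riesz pseudonorm property. Concretely, assume that $x_\alpha \downarrow 0$ always implies $\rho(x_\alpha) \to 0$, and suppose $x_\alpha \xrightarrow{o} x$ in $X$. We must show $\rho(|x_\alpha - x|) \to 0$. Replacing $x_\alpha$ by $x_\alpha - x$, it suffices to treat the case $x = 0$.

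By the definition of order convergence recalled in the preliminaries, there exists a net $y_\beta$ (possibly on a different index set) with $y_\beta \downarrow 0$ such that for each index $\beta$ one can find some $\alpha(\beta)$ with $|x_\alpha| \leq y_\beta$ for all $\alpha \geq \alpha(\beta)$. The hypothesis applied to the decreasing net $y_\beta$ yields $\rho(y_\beta) \to 0$. Fix $\epsilon > 0$; choose $\beta_0$ so that $\rho(y_{\beta_0}) < \epsilon$, and then choose $\alpha_0 = \alpha(\beta_0)$. Since $\rho$ is a Riesz pseudonorm and $|x_\alpha| \leq y_{\beta_0}$ for $\alpha \geq \alpha_0$, we obtain
\[
\rho(x_\alpha) \;=\; \rho(|x_\alpha|) \;\leq\; \rho(y_{\beta_0}) \;<\; \epsilon
\]
for all $\alpha \geq \alpha_0$, so $\rho(x_\alpha) \to 0$, as required.

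There is no genuine obstacle here; the only subtlety is carefully unpacking the two-net definition of order convergence used in the paper and invoking the Riesz property at the right moment so that the monotone hypothesis on $y_\beta$ can be leveraged. No additional structure (completeness, Fatou property, sequential vs.\ net arguments) is needed.
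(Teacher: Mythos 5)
Your proof is correct and follows essentially the same route as the paper's: both directions unpack the two-net definition of order convergence, apply the hypothesis to the dominating decreasing net $y_\beta$, and transfer the bound to $x_\alpha$ via the Riesz pseudonorm property. The only cosmetic difference is your explicit reduction to the case $x=0$, which the paper leaves implicit.
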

\begin{proof}
	Forward implication is clear. Let $x_{\alpha}\xrightarrow{o} 0$ so that there exists $z_{\beta}\downarrow 0,$ with a possibly different index set, such that for any $\beta$ there exists $\alpha_{\beta}$ satisfying $|x_{\alpha}|\leq z_{\beta}$ for all $\alpha\geq \alpha_{\beta}$. Since $\rho$ is a Riesz pseudonorm, $\rho(x_{\alpha})\leq \rho(z_{\beta})$ for all $\alpha\geq \alpha_{\beta}$. By the assumption, $\rho(z_{\beta})\downarrow 0$. Hence, $\rho(x_{\alpha})\xrightarrow{}0.$ This shows that the Riesz pseudonorm $\rho$ is order continuous.
\end{proof}

\begin{thm}
	Let $\rho$ be a Riesz pseudonorm on a vector lattice $X$ such that the generating cone $X^+$ is closed and $X$ is complete with respect to the locally solid topology on $X$ induced by $\rho$. Following statements are equivalent: 
		\begin{itemize}
		\item[\em i.] {The pseudonorm $\rho$ is order continuous;} 
		\item[\em ii.] {if $0\leq x_{\alpha}\uparrow x$ holds in $X$ then $x_{\alpha}$ is a Cauchy net in the locally solid topology on $X$ induced by $\rho$;}
		\item[\em iiii.] {$x_{\alpha}\downarrow 0$ in $X$ implies $\rho(x_{\alpha})\xrightarrow{} 0$. }
	\end{itemize}
\end{thm}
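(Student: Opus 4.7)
The plan is to observe that the equivalence (i) $\Leftrightarrow$ (iii) is already given by Proposition~\ref{Prop11006}, so the real work is to tie condition (ii) to the other two. I would set up the cycle (i) $\Rightarrow$ (ii) $\Rightarrow$ (iii) $\Rightarrow$ (i), where the last arrow is free from Proposition~\ref{Prop11006}. The completeness assumption and closedness of $X^+$ are used only in the direction (ii) $\Rightarrow$ (iii) via Lemma~\ref{LemmaMonotoneConv}.

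For (i) $\Rightarrow$ (ii), I would assume $0 \leq x_\alpha \uparrow x$. Then the net $x - x_\alpha$ decreases to $0$ in the order sense, so by order continuity (and Proposition~\ref{Prop11006}) we have $\rho(x - x_\alpha) \to 0$. For any two indices $\alpha \leq \beta$ we have $0 \leq x_\beta - x_\alpha \leq x - x_\alpha$, and since $\rho$ is a Riesz pseudonorm it follows that $\rho(x_\beta - x_\alpha) \leq \rho(x - x_\alpha) \to 0$. This is exactly the Cauchy condition for $(x_\alpha)$ in the locally solid topology induced by $\rho$.

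For (ii) $\Rightarrow$ (iii), I would start from a net $x_\alpha \downarrow 0$ and fix an index $\alpha_0$. Define $y_\alpha \coloneqq x_{\alpha_0} - x_\alpha$ for $\alpha \geq \alpha_0$; then $0 \leq y_\alpha \uparrow x_{\alpha_0}$ in the order sense. Applying hypothesis (ii), $(y_\alpha)$ is Cauchy in the locally solid topology induced by $\rho$, so by completeness it converges in pseudonorm to some $z \in X$. Since $X^+$ is closed with respect to this topology and $y_\alpha$ is increasing, Lemma~\ref{LemmaMonotoneConv} gives $y_\alpha \uparrow z$ in the order sense. But the supremum of a net is unique, and we already know $y_\alpha \uparrow x_{\alpha_0}$, so $z = x_{\alpha_0}$. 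Therefore $\rho(x_\alpha) = \rho(x_{\alpha_0} - y_\alpha) \to 0$, which is (iii).

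The main obstacle is the direction (ii) $\Rightarrow$ (iii): one must convert the abstract Cauchy property of $(y_\alpha)$ into an actual order identification of its pseudonorm limit with $x_{\alpha_0}$. This is exactly the point where all three auxiliary assumptions (the Riesz property of $\rho$, closedness of $X^+$, and pseudonorm completeness) come into play, and Lemma~\ref{LemmaMonotoneConv} is the right tool to bridge the topological and order-theoretic pictures.
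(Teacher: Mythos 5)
Your proof is correct and follows essentially the same route as the paper: the cycle (i) $\Rightarrow$ (ii) $\Rightarrow$ (iii) $\Rightarrow$ (i), with Lemma~\ref{LemmaMonotoneConv} used together with completeness and the closedness of $X^+$ to identify the pseudonorm limit of the increasing Cauchy net in (ii) $\Rightarrow$ (iii), and Proposition~\ref{Prop11006} closing the loop. The only cosmetic point is that in (i) $\Rightarrow$ (ii) you verify the Cauchy estimate only for comparable indices $\alpha\leq\beta$; for arbitrary $\alpha,\beta\geq\alpha_0$ one still has $|x_{\alpha}-x_{\beta}|\leq x-x_{\alpha_0}$, so the same bound applies and the argument is complete.
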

\begin{proof}
$(i)\Rightarrow (ii)$. Let $0\leq x_{\alpha}\uparrow \leq x$ in the vector lattice $X$. There exists a net $y_{\beta}$ in $X$ such that $(y_{\beta}-x_{\alpha})_{\alpha,\beta}\downarrow 0.$ Hence $\rho(y_{\beta}-x_{\alpha})\to 0$ and the net $x_{\alpha}$ is Cauchy with respect to the locally solid topology on $X$ induced by $\rho$. 

$(ii)\Rightarrow (iii)$. Let $x_{\alpha}\downarrow 0$ in $X$. Fix an arbitrary $\alpha_0.$ It follows that $x_{\alpha}\leq x_{\alpha_0}$ for $\alpha\geq \alpha_0$ and that $0\leq (x_{\alpha_0}-x_{\alpha})_{\alpha\geq \alpha_0}\uparrow \leq x_{\alpha_0}.$ The net $(x_{\alpha_0}-x_{\alpha})_{\alpha\geq \alpha_0}$ is Cauchy with respect to the locally solid topology induced by $\rho$. As $X$ is complete with respect to this topology, there exists an $x\in X$ satisfying $\rho(x-x_{\alpha})\to 0.$ It follows from Lemma~\ref{LemmaMonotoneConv} that $x=0.$ Hence, $x_{\alpha}$ is pseudonorm converges to $0$ and $\rho(x_{\alpha})\to 0.$

$(iii)\Rightarrow (i)$. Follows from Proposition~\ref{Prop11006}.
\end{proof}

\begin{prop}
	Let $\rho$ be a Riesz pseudonorm on a vector lattice $X$. If $\rho$ is order continuous and $X$ is complete with respect to the locally solid topology induced by $\rho$ then $X$ is order complete.
\end{prop}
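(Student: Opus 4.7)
The plan is to show that every increasing order-bounded positive net in $X^+$ has a supremum in $X$, which by the standard reduction through finite joins suffices for order completeness. So fix $0 \le x_\alpha \uparrow \le u$ in $X$. I will argue in three steps: (i) the net $(x_\alpha)$ is $\rho$-Cauchy, (ii) $\rho$-completeness produces a limit $x \in X$, and (iii) $x$ is the supremum of $\{x_\alpha\}$ in $X$.

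For step (i), the natural estimate $|x_\alpha - x_\beta| \le u - x_{\alpha_0}$ for $\alpha, \beta \ge \alpha_0$ (using $x_{\alpha_0} \le x_\alpha, x_\beta \le u$) reduces Cauchyness to showing $\rho(u - x_\alpha) \to 0$. Since $(u - x_\alpha)$ is decreasing and $\rho$ is Riesz, $\rho(u - x_\alpha)$ is monotone decreasing to some $\delta \ge 0$, and the task is to rule out $\delta > 0$. The main obstacle is that order continuity of $\rho$ does not apply directly to $(u - x_\alpha)$, because this decreasing net need not have infimum zero in $X$; indeed, existence of that infimum is essentially equivalent to existence of the supremum we are trying to construct. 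I would circumvent this by passing to the Dedekind completion $X^\delta$, in which $(x_\alpha)$ has a supremum $\hat x \le u$, so $u - x_\alpha \downarrow u - \hat x$ in $X^\delta$. Extending $\rho$ to $X^\delta$ by $\tilde\rho(\hat z) \coloneqq \inf\{\rho(v) : v \in X^+,\, v \ge |\hat z|\}$ yields a Riesz pseudonorm on $X^\delta$ agreeing with $\rho$ on $X$; combined with the order density of $X$ in $X^\delta$ and the order continuity hypothesis on $\rho$, one verifies $\tilde\rho((u - x_\alpha) - (u - \hat x)) \to 0$, so that $(u - x_\alpha)$, and hence $(x_\alpha)$, is $\rho$-Cauchy in $X$.

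Step (ii) is immediate from completeness. For step (iii), applying Lemma~\ref{L001} to the identity $(x_{\alpha_0} - x_\alpha)^+ = 0$ valid for $\alpha \ge \alpha_0$ yields $\rho((x_{\alpha_0} - x)^+) = 0$ in the $\rho$-limit; together with the implicit non-degeneracy of $\rho$ this gives $x_{\alpha_0} \le x$ for every $\alpha_0$. For minimality, any upper bound $y$ of $\{x_\alpha\}$ satisfies $(x_\alpha - y)^+ = 0$, which propagates to $(x - y)^+ = 0$ in the $\rho$-limit, hence $x \le y$, and $x = \sup_\alpha x_\alpha$.

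The principal obstacle is step (i): the absence of an a priori infimum for $(u - x_\alpha)$ in $X$ means order continuity alone is insufficient, and both the Dedekind-completion device and the completeness of $X$ must be combined to force $\rho(u - x_\alpha) \to 0$. The remaining steps use only continuity of lattice operations and standard completeness arguments.
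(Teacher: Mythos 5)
Your overall architecture (monotone net is $\rho$-Cauchy, completeness supplies a limit, the limit is the supremum) is the same as the paper's, which deduces Cauchyness from the preceding theorem's implication $(i)\Rightarrow(ii)$ and then concludes $x_\alpha\uparrow x$ via Lemma~\ref{LemmaMonotoneConv}. But your step (i) contains a genuine problem. The initial reduction of Cauchyness to ``$\rho(u-x_\alpha)\to 0$'' is a reduction to a false statement: $u$ is merely an upper bound, $u-x_\alpha$ decreases to $u-\hat x\neq 0$ in general, and the limit $\delta$ of $\rho(u-x_\alpha)$ is typically strictly positive (take $X=\rr$, $\rho=|\cdot|$, $x_n=1-1/n$, $u=2$), even though the net is of course Cauchy; so ``ruling out $\delta>0$'' is not a viable goal. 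You then silently replace this target by the correct one, $\tilde\rho(\hat x-x_\alpha)\to 0$ in the Dedekind completion, but this --- the entire analytic content of the proposition --- is dispatched with ``one verifies.'' It can be verified, but the verification is exactly the argument you have omitted: since $X$ is Archimedean and order dense in its Dedekind completion, the double net $(y_\beta-x_\alpha)_{\beta,\alpha}$, where $y_\beta$ runs over the upper bounds of $\{x_\alpha\}$ in $X$, decreases to zero \emph{in $X$}; order continuity then gives $\rho(y_\beta-x_\alpha)\to 0$, and $|x_\alpha-x_{\alpha'}|\leq y_{\beta_0}-x_{\alpha_0}$ for $\alpha,\alpha'\geq\alpha_0$ yields Cauchyness directly. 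This is the paper's route, and once one has it the detour through the completion and the extended pseudonorm $\tilde\rho$ is superfluous; as written, your proof postpones the key lemma rather than proving it.

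A further caveat, which you share with the paper: your step (iii) invokes ``the implicit non-degeneracy of $\rho$'' to pass from $\rho((x_{\alpha_0}-x)^+)=0$ to $x_{\alpha_0}\leq x$. The proposition does not assume the topology is Hausdorff, and the paper elsewhere stresses that $\rho$-limits need not be unique. The paper's own proof carries the equivalent hidden hypothesis, since its final step ``hence $x_\alpha\uparrow x$'' rests on Lemma~\ref{LemmaMonotoneConv}, which requires $X^+$ to be $\rho$-closed (for a Riesz pseudonorm this is equivalent to non-degeneracy, via Lemma~\ref{L001}). So this is a defect of the statement rather than of your argument alone, but the hypothesis should be stated rather than called implicit.
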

\begin{proof}
	Let $0\leq x_{\alpha}\uparrow u$ It follows that $x_{\alpha}$ is a Cauchy net with respect to the locally solid topology induced by $\rho$. Since $X$ is complete, there is an $x\in X^+$ such that $x_{\alpha}\xrightarrow{\rho}x.$ Hence, $x_{\alpha}\uparrow x,$ and $X$ is order complete.
\end{proof}

\section{Compact Operators between Pseudonormed Vector Lattices}\label{PseudoCompact}

We continue to follow the convention given in Section~\ref{PseudoBasics} that when the pseudonorm in discussion is clear from the context, we write $X$ for the pseudonormed vector lattice $(X,\rho)$.

\begin{definition}\label{DefnPsCompact}
	An operator $T\colon X\to Y$ between two pseudonormed vector lattices $X$ and $Y$ is said to be \textit{pseudonorm compact} if for any pseudo-bounded net $x_{\alpha}$ in $X,$ there is a subnet $x_{\alpha_\beta}$ and $y\in Y$ such that the net $Tx_{\alpha_\beta}$ is pseudonorm convergent to $y\in Y$.
\end{definition}

It follows from Definition~\ref{DefnPsCompact} that an operator $T\colon X\to Y$ between two pseudonormed vector lattices $X$ and $Y$ is pseudonorm compact if and only if $T$ maps pseudo-bounded subsets of $X$ into conditionally pseudonorm compact subsets of $Y$, see Section~\ref{PseudoBasics}.

An operator $T\colon X\to Y$ is said to be \textit{sequentially pseudonorm compact} if for any pseudo-bounded sequence $x_n$ in $X$ there is a subsequence $x_{n_k}$ such that the sequence $Tx_{n_k}$ is pseudonorm convergent in $Y$.

\begin{exam} In this example we show that pseudonorm compact operators canonically generalize the class of compact operators. Let $X$ and $Y$ be normed vector lattices. Since a Riesz norm is in particular a Riesz pseudonorm, an operator $T\colon X\to Y$ is (sequentially) pseudonorm compact with respect to norms if and only if $T\colon X\to Y$ is compact. It is instructive to recall one of the most well-understood example of compact operators. Consider the Banach lattice $C[a,b]$ of continuous real valued functions on the closed real interval $[a,b]$ with the uniform norm. A degenerate kernel on $[a,b]$ is a a function of the form $k(x,y)=\sum_{i=1}^n\beta_i(x)\gamma_i(y)$ in which each $\beta_i$ is continuous on $[a,b]$ and $\gamma_i$ is absolutely integrable on $[a,b]$ for $i=1,2,\ldots,n.$ The induced operator $K\colon C[a,b]\to C[a,b]$ with $Kf(x)=\sum_{i=1}^{n}\beta_i(x)\int_{a}^{b}\gamma_i(y)f(y)dy$ for $f\in C[a,b] $ is compact.
\end{exam}

Let $X$ be a Banach lattice and $Y$ be a Dedekind complete normed lattice. We recall that an operator $T\colon X\to Y$ is called \textit{$AM$-compact} if and only if $T[-x,x]$ is relatively compact for every $x\in X^+$, see the recent articles~\cite{AEEM2018, Pli2011} and the references therein. In this case, if $T\colon X\to Y$ is a compact operator $X$ and $Y$ then $T$ is $AM$-compact.
\begin{thm}\label{PAMPseudo}
	Let $\rho$ be a bounded pseudonorm on a Banach lattice $X$. Let $(Y,\norm{\cdot}_Y)$ be a Dedekind complete normed lattice. If an operator $T\colon X\to Y$ is $AM$-compact then $T$ is pseudonorm compact with respect to $\rho$ and $\norm{\cdot}_Y$. 
\end{thm}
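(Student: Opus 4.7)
The plan is to combine the defining property of a bounded pseudonorm with the defining property of an $AM$-compact operator. The argument is essentially a two-step translation: a pseudo-bounded set becomes order bounded by hypothesis on $\rho$, and an order bounded set has a relatively compact image under $T$.

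First I would take an arbitrary pseudo-bounded net $x_{\alpha}$ in $X$, so that $\rho(\{x_{\alpha}\})$ is a bounded subset of $\mathbb{R}$. Since $\rho$ is a bounded pseudonorm in the sense of Definition~\ref{Def001}(i), the set $\{x_{\alpha}\}$ is order bounded in $X$. Choose $u,v\in X$ with $\{x_{\alpha}\}\subseteq [u,v]$ and put $w=|u|\vee |v|\in X^+$, so that $\{x_{\alpha}\}\subseteq [-w,w]$.

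Next I would invoke $AM$-compactness of $T$: the image $T[-w,w]$ is relatively compact in $(Y,\norm{\cdot}_Y)$. Therefore $\{Tx_{\alpha}\}\subseteq T[-w,w]$ is contained in a norm-compact subset of $Y$. Relative norm compactness of a set in a metric space ensures that every net inside it admits a norm convergent subnet (this is the standard equivalence of compactness and net-compactness in a Hausdorff space, applied to the closure $\overline{T[-w,w]}$). Hence there exists a subnet $Tx_{\alpha_{\beta}}$ and a point $y\in Y$ with $\norm{Tx_{\alpha_{\beta}}-y}_Y\to 0$, which is exactly pseudonorm convergence of $Tx_{\alpha_{\beta}}$ to $y$ with respect to the norm $\norm{\cdot}_Y$. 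By Definition~\ref{DefnPsCompact}, $T$ is pseudonorm compact.

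There is essentially no obstacle here: the statement reduces to chaining two definitions, and the only mildly nontrivial point is extracting a convergent subnet from a net contained in a relatively compact metric space, which is standard. The completeness hypothesis on $X$ (Banach lattice) and Dedekind completeness of $Y$ enter only through the prerequisites of the definition of $AM$-compactness and play no further role in the proof itself.
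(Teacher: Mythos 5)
Your proposal is correct and follows essentially the same route as the paper's own proof: pseudo-boundedness plus the bounded-pseudonorm hypothesis yields order boundedness of the net, and $AM$-compactness then supplies a norm convergent subnet of the image. The only difference is that you spell out the intermediate steps (passing to the symmetric interval $[-w,w]$ and invoking the net characterization of relative compactness) which the paper leaves implicit.
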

\begin{proof}
	Let $x_{\alpha}$ be a pseudo-bounded net with respect to $\rho$. Since $\rho$ is a bounded pseudonorm, the net $x_{\alpha}$ is order bounded in $X$. Since $T$ is $AM$-compact, there exists a subnet $x_{\alpha_{\beta}}$ and $y\in Y$ such that $Tx_{\alpha_{\beta}}\xrightarrow{\norm{\cdot}_Y} y.$ This shows that $T$ is pseudonorm compact with respect to $\rho$ and $\norm{\cdot}_Y$. 
\end{proof}
\begin{exam}\label{ExamBoundedPseu}
	Suppose that $X$ is an $AM$-space with a strong norm unit, see Example~\ref{ExampleAMFirst}. Let $f\in X',$ the topological dual of $X,$ and $\rho_f(x)=|f(x)|$. If $B\subseteq X$ is pseudo-bounded with respect to $\rho_f$ then it is norm bounded in $X$. Hence, the subset $B$ is order bounded in $X$. It follows that $\rho_f$ is a bounded pseudonorm on $X$. In the settings of Theorem~\ref{PAMPseudo}, if an operator $T\colon X\to Y$ is $AM$-compact then $T$ is pseudonorm compact with respect to $\rho_f$ and $\norm{\cdot}_Y$.
\end{exam}

We recall from Section~\ref{PseudoBasics}, also see~\cite{EV2017}, that $\rho_u(x)=\rho(|x|\wedge u)$ for $x\in X$ and $u\in X^+$. 
\begin{prop}
	Suppose that $T\colon (X,\rho)\to (Y,\rho')$ is a pseudonorm compact operator between two seminormed vector lattices $(X,\rho)$ and $(Y,\rho')$. Let $u\in X^+$ and $v\in Y^+$ be arbitrary. If $\rho_u$ is a bounded pseudonorm on $X$ then $T$ is pseudonorm compact with respect to pseudonorms $\rho_u$ and $\rho'_v.$
\end{prop}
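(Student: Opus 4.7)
The plan is to chain together three observations: (i) the hypothesis that $\rho_u$ is bounded lets us transfer pseudo-boundedness upstairs into order boundedness; (ii) order-bounded sets are automatically $\rho$-pseudo-bounded because $\rho$ is a Riesz seminorm; and (iii) on the target side, the pseudonorm $\rho'_v$ is dominated by $\rho'$, so $\rho'$-convergence passes to $\rho'_v$-convergence for free.

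First I would take an arbitrary net $x_\alpha$ in $X$ that is pseudo-bounded with respect to $\rho_u$, i.e.\ $\sup_\alpha \rho(|x_\alpha|\wedge u)<\infty$, and invoke the standing assumption that $\rho_u$ is a bounded pseudonorm to conclude that $\{x_\alpha\}$ is order bounded in $X$. Since $\rho$ is a Riesz seminorm, every order bounded set is $\rho$-pseudo-bounded (this was recorded right after the definition of pseudo-bounded sets), so $\{x_\alpha\}$ lies in the pseudo-bounded sets with respect to $\rho$ as well.

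Next I would apply the hypothesis that $T\colon (X,\rho)\to(Y,\rho')$ is pseudonorm compact to produce a subnet $x_{\alpha_\beta}$ and some $y\in Y$ such that $\rho'(Tx_{\alpha_\beta}-y)\to 0$. To finish, I observe that for any $z\in Y$ one has $|z|\wedge v\le |z|$ pointwise in the lattice sense, so the Riesz seminorm property of $\rho'$ gives
\[
\rho'_v(z)=\rho'(|z|\wedge v)\le \rho'(|z|)=\rho'(z).
\]
Applying this with $z=Tx_{\alpha_\beta}-y$ yields $\rho'_v(Tx_{\alpha_\beta}-y)\le \rho'(Tx_{\alpha_\beta}-y)\to 0$, i.e.\ $Tx_{\alpha_\beta}\xrightarrow{\rho'_v} y$. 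This exhibits the required convergent subnet, so $T$ is pseudonorm compact with respect to $\rho_u$ and $\rho'_v$.

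There is no real obstacle here; the argument is a bookkeeping exercise. The only point requiring slight care is the direction of the domination $\rho'_v\le \rho'$, which is the opposite of what one might naively guess from the formula $\rho'_v(z)=\rho'(|z|\wedge v)$, but it is immediate once one remembers that $\rho'$ is a \emph{Riesz} seminorm and $|z|\wedge v\le |z|$. Note that the element $v\in Y^+$ plays no active role in the proof beyond supplying this inequality, and the specific choice of $u\in X^+$ enters only through the boundedness hypothesis on $\rho_u$.
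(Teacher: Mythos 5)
Your proof is correct and follows essentially the same route as the paper's: transfer $\rho_u$-pseudo-boundedness to order boundedness via the boundedness hypothesis, then to $\rho$-pseudo-boundedness via the Riesz seminorm property, apply pseudonorm compactness of $T$, and finish with the domination $\rho'_v\le\rho'$. You merely spell out the final inequality that the paper leaves implicit in the remark that $\rho'$ is a Riesz seminorm.
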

\begin{proof}
	Let $(x_{\alpha})\subseteq X$ be pseudo-bounded with respect to $\rho_u.$ Since $\rho_u$ is a bounded pseudonorm on $X,$ the net $x_{\alpha}$ is order bounded in $X$. Thus, $x_{\alpha}$ is pseudo-bounded with respect to $\rho$. As $T\colon (X,\rho)\to (Y,\rho')$ is pseudonorm compact, there exists a subnet $x_{\alpha_{\beta}}$ and $y\in Y$ such that $Tx_{\alpha_{\beta}}\xrightarrow{\rho'}y$. We note that $\rho'$ is a Riesz seminorm. These imply that $Tx_{\alpha_{\beta}}\xrightarrow{\rho'_v}y$ for any $v\in Y^+$. Hence the operator $T$ is pseudonorm compact with respect to pseudonorms $\rho_u$ and $\rho'_v$.
\end{proof}

\begin{coro}
	Suppose that $X$ is an order continuous Banach lattice and $u,v\in X^+$. Let $\rho$ be a Riesz seminorm on $X$ such that $\rho$ is coarser than the norm and that $\rho_u$ is bounded. If an operator $T\colon X\to X$ is $AM$-compact then $T$ is pseudonorm compact with respect to pseudonorms $\rho_u$ and $\rho_v.$
\end{coro}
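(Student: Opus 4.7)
The plan is to apply Theorem~\ref{PAMPseudo} with the bounded pseudonorm on the domain taken to be $\rho_u$, and then to downgrade the resulting norm convergence in the codomain to $\rho_v$-convergence using that $\rho$ is coarser than the norm. The role of the order continuity hypothesis on $X$ is essentially bookkeeping: it guarantees Dedekind completeness of $X$ so that $X$ itself plays the role of the Dedekind complete normed lattice required on the codomain side of Theorem~\ref{PAMPseudo}.

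First I would verify the hypotheses of Theorem~\ref{PAMPseudo}. The map $\rho_u(x)=\rho(|x|\wedge u)$ is a Riesz pseudonorm by Lemma~\ref{UnboundedRiesz} and is bounded by assumption; the codomain $(X,\|\cdot\|)$ is a Dedekind complete normed lattice because every order continuous Banach lattice is Dedekind complete; and $T$ is $AM$-compact. Theorem~\ref{PAMPseudo} then yields that $T$ is pseudonorm compact with respect to $\rho_u$ on the domain and $\|\cdot\|$ on the codomain. Concretely, any net $x_{\alpha}$ in $X$ that is pseudo-bounded with respect to $\rho_u$ admits a subnet $x_{\alpha_{\beta}}$ and some $y\in X$ satisfying $\|Tx_{\alpha_{\beta}}-y\|\to 0$.

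Second I would establish the pointwise estimate $\rho_v(z)\leq \|z\|$ for every $z\in X$. Since $\rho$ is a Riesz seminorm we have $\rho(z)=\rho(|z|)$ and $\||z|\|=\|z\|$, so the coarseness hypothesis, stated only on $X^+$, upgrades to $\rho(z)\leq \|z\|$ for all $z\in X$. Combined with $|z|\wedge v\leq |z|$ and the Riesz property this gives $\rho_v(z)=\rho(|z|\wedge v)\leq \rho(|z|)\leq \|z\|$. Applying this estimate to $z=Tx_{\alpha_{\beta}}-y$ converts the norm convergence from the previous step into $\rho_v(Tx_{\alpha_{\beta}}-y)\to 0$, which is exactly pseudonorm compactness with respect to $\rho_u$ and $\rho_v$.

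There is no substantive obstacle in this argument; the only point requiring care is tracking which pseudonorm lives on which side, and remembering that the order continuity assumption is used solely to ensure Dedekind completeness of $X$ so that Theorem~\ref{PAMPseudo} applies with codomain $Y=X$. No use is made of order continuity of the norm beyond this.
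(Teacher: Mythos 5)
Your proposal is correct and follows essentially the same route as the paper: invoke Theorem~\ref{PAMPseudo} with the bounded pseudonorm $\rho_u$ on the domain to get pseudonorm compactness into $(X,\norm{\cdot})$, then observe that $\rho_v$ is coarser than the norm so that norm convergence of $Tx_{\alpha_\beta}$ passes to $\rho_v$-convergence. Your version is in fact slightly more careful than the paper's, since you explicitly note that order continuity supplies the Dedekind completeness required of the codomain in Theorem~\ref{PAMPseudo} and you justify upgrading the coarseness inequality from $X^+$ to all of $X$ via the Riesz seminorm property.
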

\begin{proof}
	Since $T\colon (X,\norm{\cdot})\to (X,\norm{\cdot}) $ is $AM$-compact and $\rho_u$ is a bounded pseudonorm on $X,$ it follows from Theorem~\ref{PAMPseudo} that $T\colon (X,\rho_u)\to (X,\norm{\cdot})$ is pseudonorm compact. As $\rho$ is a Riesz seminorm on $X$ which is coarser than the norm, $\rho_v$ is a Riesz pseudonorm on $X$ which is coarser than the norm. It follows that $T\colon (X,\rho_u)\to (X,\rho_v)$ is pseudonorm compact. 
\end{proof}

We recall that an operator $T\colon X\to Y$ between two vector lattices $X$ and $Y$ is said to be \textit{order compact}, see~\cite{EGGS2019}, if for every order bounded net $x_{\alpha}$ in $X$ there exists a subnet $x_{\alpha_{\beta}}$ and $y\in Y$ such that $Tx_{\alpha_{\beta}}\xrightarrow{o} y$ in $Y$.

\begin{prop}\label{PPseudoOrder}
	Suppose that $(X,\rho)$ and $(Y,\rho')$ are pseudonormed vector lattices such that $\rho'$ has the subnet property. If an operator $T\colon X\to Y$ is pseudonorm compact then $T$ is order compact.
\end{prop}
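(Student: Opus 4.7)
My plan is to chain together three short observations. Begin with an arbitrary order bounded net $x_\alpha$ in $X$; the goal is to produce a subnet on which $T$ acts with an order convergent image in $Y$, which is precisely the definition of order compactness.

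First, I would argue that, under the standing Riesz pseudonorm hypothesis on $\rho$ (implicit throughout this section, and strictly needed here), an order bounded net in $X$ is automatically pseudo-bounded. Indeed, if $x_\alpha\in[u,v]$ for some $u,v\in X$, then $|x_\alpha|\leq |u|\vee|v|$, so the Riesz property yields $\rho(x_\alpha)\leq\rho(|u|\vee|v|)$, showing $\{\rho(x_\alpha)\}$ is bounded. This is the observation already recorded at the start of Section~\ref{PseudoBasics}.

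Next, I would apply the pseudonorm compactness of $T$ (Definition~\ref{DefnPsCompact}) to the pseudo-bounded net $x_\alpha$ to extract a subnet $x_{\alpha_\beta}$ together with some $y\in Y$ satisfying $Tx_{\alpha_\beta}\xrightarrow{\rho'}y$; equivalently $\rho'(Tx_{\alpha_\beta}-y)\to 0$. Then I would invoke the subnet property of $\rho'$ from Definition~\ref{Def002}(iii), applied to the net $Tx_{\alpha_\beta}-y$, to pass to a further subnet with $Tx_{\alpha_{\beta_\gamma}}-y\xrightarrow{o}0$, that is, $Tx_{\alpha_{\beta_\gamma}}\xrightarrow{o}y$ in $Y$. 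Since a subnet of a subnet is a subnet, $x_{\alpha_{\beta_\gamma}}$ is a subnet of the original order bounded net, establishing order compactness.

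The argument is essentially a two-step chaining and presents no real obstacle. The only delicate point worth underlining is the very first step: without the Riesz property for $\rho$, an order bounded set need not be pseudo-bounded, so the proposition is naturally read in the Riesz setting consistent with the rest of Section~\ref{PseudoCompact}. Beyond that, the subnet property of $\rho'$ is tailored exactly to convert pseudonorm convergence into order convergence along a subnet, so once the pseudo-boundedness passage is in place the conclusion is immediate.
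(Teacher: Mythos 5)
Your proposal is correct and follows essentially the same route as the paper: order bounded implies pseudo-bounded, pseudonorm compactness yields a $\rho'$-convergent subnet, and the subnet property of $\rho'$ upgrades this to order convergence along a further subnet. Your explicit remark that the first step relies on $\rho$ being a Riesz pseudonorm is a point the paper leaves implicit, but it does not change the argument.
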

\begin{proof}
	Let $x_{\alpha}$ be an order bounded net in $X.$ The net $x_{\alpha}$ is pseudo-bounded with respect to $\rho$. Hence, there exists a subnet $x_{\alpha_{\beta}}$ and $y\in Y$ such that $Tx_{\alpha_{\beta}}\xrightarrow{\rho'} y.$ As $\rho'$ has the subnet property, $Tx_{\alpha_{\beta_{\gamma}}}\xrightarrow{o} y$ for a further subnet $x_{\alpha_{\beta_{\gamma}}}$ of $x_{\alpha_{\beta}}$. This shows that the operator $T\colon X\to Y$ is order compact. 
\end{proof}

In views of Proposition~\ref{PPseudoOrder}, Example~\ref{AnExample} and~\cite[Theorem VII.2.1]{V1967}, we have the following result. 

\begin{thm}\label{PPseudoOrderSeq}
	Suppose that $(X,\rho)$ and $(Y,\rho')$ are pseudonormed vector lattices such that $\rho'$ has the subsequence property. If an operator $T\colon X\to Y$ is sequentially pseudonorm compact then $T$ is sequentially order compact.
\end{thm}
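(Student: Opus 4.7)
The plan is to run the same argument as in Proposition~\ref{PPseudoOrder}, but adapted to sequences, where the subsequence property (rather than the subnet property) is the tool that converts pseudonorm convergence into order convergence. The hypothesis that $\rho'$ has the subsequence property is exactly what makes this transfer possible, as indicated by Example~\ref{AnExample} together with~\cite[Theorem VII.2.1]{V1967}, which guarantees that this property is available in Banach lattice settings.

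First, I would fix an order bounded sequence $x_n$ in $X$. As in the proof of Proposition~\ref{PPseudoOrder}, I want to conclude that $x_n$ is pseudo-bounded with respect to $\rho$, so that the sequential pseudonorm compactness hypothesis becomes applicable. Strictly speaking this uses that order bounded sets are pseudo-bounded (which holds whenever $\rho$ is a Riesz pseudonorm, as noted in Section~\ref{PseudoBasics}); this is the implicit convention also used in Proposition~\ref{PPseudoOrder}.

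Once $x_n$ is pseudo-bounded, the sequential pseudonorm compactness of $T$ produces a subsequence $x_{n_k}$ and an element $y\in Y$ with $Tx_{n_k}\xrightarrow{\rho'} y$, i.e. $\rho'(Tx_{n_k}-y)\to 0$. Now I would invoke the subsequence property of $\rho'$ from Definition~\ref{Def002}: from the pseudonorm-null sequence $Tx_{n_k}-y$ we can extract a further subsequence $Tx_{n_{k_j}}-y$ that order converges to $0$ in $Y$, and hence $Tx_{n_{k_j}}\xrightarrow{o} y$. This exhibits the subsequence of $Tx_n$ required by the definition of sequential order compactness, completing the argument.

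The main (minor) obstacle is really just bookkeeping: ensuring that the subsequence-of-subsequence extraction is legitimate and that one correctly identifies the order limit $y$ as the pseudonorm limit already produced in the previous step. There is no deeper content beyond what is already contained in Proposition~\ref{PPseudoOrder}; the present statement is simply its sequential counterpart, with the subnet property replaced by its sequential analogue.
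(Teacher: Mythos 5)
Your proof is correct and follows essentially the same route as the paper's: order bounded implies pseudo-bounded, apply sequential pseudonorm compactness to extract a subsequence with $Tx_{n_k}\xrightarrow{\rho'}y$, then use the subsequence property of $\rho'$ to pass to a further subsequence that order converges to $y$. Your remark that the step ``order bounded $\Rightarrow$ pseudo-bounded'' implicitly relies on $\rho$ being a Riesz pseudonorm is a fair observation; the paper's own proof makes the same assertion without comment.
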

\begin{proof}
	The proof is similar to that of Proposition~\ref{PPseudoOrder}. Let $x_n$ be an order bounded sequence in $X$. Hence, the sequence $x_n$ is pseudo-bounded with respect to $\rho$. As the operator $T\colon X\to Y$ is pseudonorm compact, there exists a subsequence $x_{n_k}$ and $y\in F$ such that $Tx_{n_k}\xrightarrow{\rho'} y$ in $Y$. Since $\rho'$ has the subsequence property, there exists a further subsequence $x_{n_{k_m}}$ such that $Tx_{n_k}\xrightarrow{o} y$ in $Y$. Hence, the operator $T$ is sequentially order compact.
\end{proof}

\begin{thm}\label{Porderpseudo}
	Suppose that $(X,\rho)$ and $(Y,\rho')$ are pseudonormed vector lattices such that $\rho$ is bounded and $\rho'$ is order continuous. If an operator $T\colon X\to Y$ is order compact then $T$ is pseudonorm compact.
\end{thm}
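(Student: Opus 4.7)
The plan is to follow a direct chain of implications exploiting the three hypotheses in sequence: boundedness of $\rho$ converts pseudo-boundedness into order-boundedness, order compactness of $T$ then yields an order-convergent subnet, and finally order continuity of $\rho'$ upgrades this to pseudonorm convergence.

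More concretely, I would start with an arbitrary pseudo-bounded net $(x_\alpha)$ in $X$ and need to produce a subnet whose image under $T$ is $\rho'$-convergent in $Y$. Since $\rho$ is a bounded pseudonorm, Definition~\ref{Def001}(i) implies that $(x_\alpha)$ is order bounded in $X$, that is, there is some order interval containing the whole net. Then I would invoke the order compactness of $T$ (recalled just before Proposition~\ref{PPseudoOrder}): there exist a subnet $(x_{\alpha_\beta})$ and some $y\in Y$ with $Tx_{\alpha_\beta}\xrightarrow{o} y$ in $Y$. Finally, order continuity of $\rho'$ (Definition~\ref{Def001}(vi)) gives $Tx_{\alpha_\beta}\xrightarrow{\rho'} y$, which is exactly the conclusion required by Definition~\ref{DefnPsCompact}.

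There is essentially no analytic obstacle here; the work lies entirely in correctly chaining the definitions, and the result is, in a sense, the dual statement to Proposition~\ref{PPseudoOrder}, where compactness hypotheses and conclusions on $\rho$ and $\rho'$ are swapped. The only subtlety worth flagging in the write-up is that the pseudo-boundedness hypothesis on $(x_\alpha)$ a priori only tells us $\rho(x_\alpha)$ is a bounded set of reals, so it is precisely the boundedness of $\rho$ (rather than any almost boundedness or finite-type assumption) that is needed to pass to order boundedness and invoke order compactness of $T$.
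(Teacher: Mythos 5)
Your proof is correct and follows exactly the same chain of implications as the paper's own argument: pseudo-boundedness to order boundedness via boundedness of $\rho$, then order compactness of $T$ to extract an order-convergent subnet, then order continuity of $\rho'$ to conclude. No gaps; nothing further is needed.
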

\begin{proof}
	Let $x_{\alpha}$ be a pseudo-bounded net in $X$. As $\rho$ is bounded, the net $x_{\alpha}$ is order bounded in $X$. Since $T$ is order compact, there exists a subnet $x_{\alpha_{\beta}}$ and $y\in Y$ such that $Tx_{\alpha_{\beta}}\xrightarrow{o}y.$ As $\rho'$ is order continuous, we have $Tx_{\alpha_{\beta}}\xrightarrow{\rho'}y.$ Hence, $T$ is pseudonorm compact with respect to pseudonorms $\rho$ and $\rho'$.
\end{proof}
\begin{exam}
	Suppose that $X$ is an $AM$-space with a strong norm unit. Let $f\in X',$ the topological dual of $X,$ and $\rho_f(x)=|f(x)|$. Then $\rho_f$ is a bounded pseudonorm, see Example~\ref{ExamBoundedPseu}. Further, let $(Y,\norm{\cdot})$ be an order continuous Banach lattice. By Theorem~\ref{Porderpseudo}, if an operator $T\colon X\to Y$ is order compact then $T$ is pseudonorm compact with respect to $\rho_f$ and $\norm{\cdot}$.
\end{exam}

\begin{coro}
 Suppose that $X$ is an $AM$-space with a strong norm unit and that $Y$ is an order continuous Banach lattice. If an operator $T\colon X\to Y$ is order compact then $T$ is compact.
\end{coro}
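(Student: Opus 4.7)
The plan is to read this off as an almost immediate specialization of Theorem~\ref{Porderpseudo}, once we identify the right pseudonorms on $X$ and $Y$. Recall that a Riesz norm is in particular a Riesz pseudonorm, and the example following Definition~\ref{DefnPsCompact} records that pseudonorm compactness with respect to norms coincides with ordinary compactness. So the task reduces to showing that $T$, viewed as a map between the pseudonormed vector lattices $(X,\|\cdot\|_X)$ and $(Y,\|\cdot\|_Y)$, is pseudonorm compact.

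First I would verify that the hypotheses of Theorem~\ref{Porderpseudo} are met. On the target side, $Y$ is by assumption an order continuous Banach lattice, so $\|\cdot\|_Y$ is an order continuous Riesz pseudonorm. On the source side, I invoke Example~\ref{ExampleAMFirst}: in an $AM$-space with a strong norm unit, every norm bounded subset is order bounded, hence the norm $\|\cdot\|_X$ is a bounded pseudonorm in the sense of Definition~\ref{Def001}(i). Thus both hypotheses of Theorem~\ref{Porderpseudo} — $\rho=\|\cdot\|_X$ bounded and $\rho'=\|\cdot\|_Y$ order continuous — hold.

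Next, since $T\colon X\to Y$ is assumed order compact, Theorem~\ref{Porderpseudo} applies directly and yields that $T$ is pseudonorm compact with respect to $\|\cdot\|_X$ and $\|\cdot\|_Y$. By the example immediately following Definition~\ref{DefnPsCompact}, this is nothing but the statement that $T$ maps norm bounded sets of $X$ to relatively norm compact sets of $Y$, i.e., $T$ is a compact operator. There is no real obstacle here: the whole content has already been isolated in Theorem~\ref{Porderpseudo} and Example~\ref{ExampleAMFirst}, and the corollary is merely the observation that the two natural norms fit the template exactly.
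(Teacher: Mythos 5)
Your proposal is correct and follows the paper's own argument exactly: the paper's proof also consists of applying Theorem~\ref{Porderpseudo} with the two norms as pseudonorms (boundedness of $\norm{\cdot}_X$ coming from the strong norm unit as in Example~\ref{ExampleAMFirst}, order continuity of $\norm{\cdot}_Y$ by hypothesis) and then identifying pseudonorm compactness with respect to norms with ordinary compactness. Your write-up simply makes the verification of the hypotheses more explicit.
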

\begin{proof}
	By Theorem~\ref{Porderpseudo}, $T\colon X\to Y$ is pseudonorm compact with respect to norms. This is equivalent to saying that $T$ is a compact operator.
\end{proof}

An operator $T\colon X\to Y$ is said to be \textit{pseudo-bounded} if $T$ maps pseudo-bounded sets into pseudo-bounded sets. It is a well-known fact that boundedly bounded operators, those operators between topological vector spaces mapping topologically bounded sets into topologically bounded sets, has diverse field of applications. See Proposition~\ref{bbOperator} and Remark~\ref{RemContinuity} for some applications in the present settings.

\begin{lem}\label{IdealProperty1} Suppose that $L,T,R\colon X\to X$ are operators on a pseudonormed vector lattice $(X,\rho)$. 
			\begin{itemize}
			\item[\em i.] {If $T$ is pseudonorm compact and $L$ is pseudonorm continuous then $LT$ is pseudonorm compact.} 
			\item[\em ii.] {If $T$ is pseudonorm compact and $R$ is pseudo-bounded then $TR$ is pseudonorm compact. }
		\end{itemize}
\end{lem}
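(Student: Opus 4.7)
The plan is to prove both parts by directly chasing a pseudo-bounded net through the composition, applying the defining property of pseudonorm compactness once, and then using the additional hypothesis on the other operator to preserve the conclusion.

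For part (i), I would start by fixing an arbitrary pseudo-bounded net $x_{\alpha}$ in $X$. Since $T$ is pseudonorm compact, Definition~\ref{DefnPsCompact} yields a subnet $x_{\alpha_{\beta}}$ and an element $y\in X$ such that $Tx_{\alpha_{\beta}}\xrightarrow{\rho} y$. Pseudonorm continuity of $L$ then gives $L(Tx_{\alpha_{\beta}})\xrightarrow{\rho} Ly$, i.e., the subnet $(LT)x_{\alpha_{\beta}}$ pseudonorm converges in $X$. Since the starting net was arbitrary, $LT$ maps pseudo-bounded nets to nets admitting a pseudonorm convergent subnet, which is exactly the definition of pseudonorm compactness.

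For part (ii), I would again take an arbitrary pseudo-bounded net $x_{\alpha}$ in $X$. Because $R$ is pseudo-bounded, it maps pseudo-bounded sets into pseudo-bounded sets, so the net $Rx_{\alpha}$ is pseudo-bounded in $X$. Pseudonorm compactness of $T$ applied to this new net then produces a subnet $Rx_{\alpha_{\beta}}$ together with $y\in X$ such that $T(Rx_{\alpha_{\beta}})\xrightarrow{\rho} y$. Thus $(TR)x_{\alpha_{\beta}}\xrightarrow{\rho} y$, establishing pseudonorm compactness of $TR$.

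There is no serious obstacle here; the only mild care is notational: in part (ii) one treats the range of the net $Rx_{\alpha}$ as the underlying pseudo-bounded set to which the compactness of $T$ is applied, and in part (i) one uses that pseudonorm continuity of $L$ means exactly continuity with respect to the locally solid topology induced by $\rho$, which preserves pseudonorm convergence of any net (and in particular of the subnet produced from $T$). Both arguments are short enough that I would not subdivide them further.
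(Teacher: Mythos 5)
Your proposal is correct and follows essentially the same argument as the paper: in (i) apply the compactness of $T$ to a pseudo-bounded net and push the convergent subnet through the continuous $L$, and in (ii) use the pseudo-boundedness of $R$ to feed a pseudo-bounded net into $T$. No differences worth noting.
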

\begin{proof}
		$(i)$. Let $x_{\alpha}$ be a pseudo-bounded net in $X$. There exists a subnet $x_{\alpha_{\beta}}$ and $x\in X$ such that $Tx_{\alpha_{\beta}}\xrightarrow{\rho} x$. Hence, $LT(x_{\alpha_{\beta}})\xrightarrow{\rho}L(x)$. Hence, $LT$ is pseudonorm compact.
		
		$(ii)$. Let $x_{\alpha}$ be a pseudo-bounded net. Since $R$ is pseudo-bounded, $Rx_{\alpha}$ is pseudo-bounded. As $T$ is pseudonorm compact, there exists a subnet $x_{\alpha_{\beta}}$ and $x\in X$ such that $TR(x_{\alpha_{\beta}})\xrightarrow{\rho}x$.
\end{proof}

\begin{thm}\label{NewPseudonorms1}
	 Suppose that $(X,\rho)$ is a seminormed vector lattice. Let $B\subseteq X$ be a pseudo-bounded subset of $(X,\rho)$, i.e., $\rho(B)<\infty$. The function $\rho_B(T)=\sup_{x\in B}\rho(Tx)$ is a pseudonorm on the linear space of all pseudo-bounded operators $T\colon X\to X$.
\end{thm}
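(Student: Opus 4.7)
My plan is to verify the three defining properties of a pseudonorm in order, after first confirming that $\rho_B$ is well-defined and finite on the space in question, and that this space is indeed a linear space.

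First, I would establish that the set $\mathcal{L}$ of pseudo-bounded operators $T\colon X\to X$ is a linear space. For $T,S\in\mathcal{L}$ and any pseudo-bounded $C\subseteq X$, the images $T(C)$ and $S(C)$ are pseudo-bounded, so $\rho(T(C))$ and $\rho(S(C))$ are bounded. Using the subadditivity of the Riesz seminorm $\rho$, one has $\rho((T+S)x)\le \rho(Tx)+\rho(Sx)$ for every $x\in C$, which shows $(T+S)(C)$ is pseudo-bounded; homogeneity of $\rho$ handles scalar multiples. Next, since $B$ is assumed pseudo-bounded and $T$ is pseudo-bounded, $T(B)$ is pseudo-bounded, hence $\rho_B(T)=\sup_{x\in B}\rho(Tx)<\infty$, so $\rho_B\colon\mathcal{L}\to\mathbb{R}$ is well-defined and non-negative.

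Next I would verify the two axioms from the definition of pseudonorm given in Section~\ref{PseudoBasics}. For subadditivity, for any $T,S\in\mathcal{L}$ and $x\in B$, the seminorm inequality $\rho(Tx+Sx)\le \rho(Tx)+\rho(Sx)\le \rho_B(T)+\rho_B(S)$ holds, and taking the supremum over $x\in B$ yields $\rho_B(T+S)\le \rho_B(T)+\rho_B(S)$. For the scalar condition, using the homogeneity of the seminorm $\rho$ one computes
\[
\rho_B(\theta T)=\sup_{x\in B}\rho(\theta\, Tx)=|\theta|\sup_{x\in B}\rho(Tx)=|\theta|\rho_B(T),
\]
which tends to $0$ as $\theta\to 0$ for every fixed $T\in\mathcal{L}$.

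There is essentially no serious obstacle here: the only delicate point is the well-definedness $\rho_B(T)<\infty$, which is exactly why the hypothesis that $T$ is pseudo-bounded and that $B$ is pseudo-bounded is needed. Everything else is a direct consequence of the seminorm axioms for $\rho$ together with the order-preserving behavior of suprema. I would close by remarking that, since $\rho$ is assumed only to be a seminorm rather than a Riesz seminorm, the proof does not use the lattice structure of $X$; the same construction would still produce a pseudonorm even if one weakened the lattice hypothesis, as long as the ambient pseudo-boundedness notion makes sense.
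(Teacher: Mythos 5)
Your proof is correct and follows essentially the same route as the paper's: subadditivity of $\rho_B$ from the triangle inequality for $\rho$, and the condition $\lim_{\theta\to 0}\rho_B(\theta T)=0$ from the homogeneity of the seminorm. The extra checks you include (that the pseudo-bounded operators form a linear space and that $\rho_B(T)<\infty$) are sensible details the paper leaves implicit, and your closing observation that the lattice structure is not used is accurate.
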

\begin{proof}
	We have \[\lim_{\theta\to 0}\rho_B(\theta T)\leq \lim_{\theta\to 0}|\theta|\sup_{x\in B}\rho(T(x))=0\] and \[\rho_B(T+S)=\sup_{x\in B}\rho(Tx+Sx)\leq \sup_{x\in B}\rho(Tx)+\sup_{x\in B}\rho(Sx)=\rho_B(T)+\rho_B(S)\] whenever $T$ and $S$ are pseudo-bounded operators.
\end{proof}
\begin{prop}\label{PatomicKB}
	Let $\rho$ be a bounded pseudonorm on a vector lattice $X$. Let $\rho'$ be an order continuous pseudonorm on an atomic $KB$-space $(Y,\norm{\cdot})$. Every order bounded operator $T\colon X\to Y$ is pseudonorm compact with respect to $\rho$ and $\rho'$.
\end{prop}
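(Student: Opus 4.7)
The plan is to take a pseudo-bounded net $x_\alpha$ in $(X,\rho)$ and extract a subnet $x_{\alpha_\beta}$ such that $Tx_{\alpha_\beta}$ converges to some $y\in Y$ in the order sense; the order continuity of $\rho'$ will then automatically deliver $\rho'$-convergence and finish the proof.

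The first reduction is purely order-theoretic. Because $\rho$ is a bounded pseudonorm on $X$, the net $\{x_\alpha\}$ is order bounded in $X$. Since $T$ is an order bounded operator, $\{Tx_\alpha\}$ is then contained in some order interval $[-u,u]\subseteq Y$ with $u\in Y^+$. All subsequent work takes place inside this order interval, so only its structure in $Y$ matters.

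The heart of the argument exploits the atomic structure of the KB-space $Y$. Let $\{e_j\}_{j\in J}$ be a maximal pairwise disjoint family of atoms of $Y$, and write $u=\sum_j \mu_j e_j$ with $\mu_j\geq 0$, together with $Tx_\alpha=\sum_j \lambda_j^\alpha e_j$ where $|\lambda_j^\alpha|\leq \mu_j$ for every $j$ and every $\alpha$. Viewing the coefficient net $(\lambda_j^\alpha)_{j\in J}$ as a net in the compact product space $\prod_j [-\mu_j,\mu_j]$, a Tychonoff-type subnet extraction provides a subnet $\alpha_\beta$ and scalars $\lambda_j^{\ast}$, with $|\lambda_j^{\ast}|\leq \mu_j$, such that $\lambda_j^{\alpha_\beta}\to \lambda_j^{\ast}$ for every $j\in J$. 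The element $y=\sum_j \lambda_j^{\ast} e_j$ lies in $[-u,u]\subseteq Y$ (here the KB-space assumption ensures that such a sum remains an honest element of $Y$), and since in an atomic vector lattice $uo$-convergence coincides with coordinatewise convergence along the atoms, one obtains $Tx_{\alpha_\beta}\xrightarrow{uo} y$.

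Finally, promote $uo$-convergence inside an order interval to order convergence: from $|Tx_{\alpha_\beta}-y|\leq 2u$ and $|Tx_{\alpha_\beta}-y|\wedge v\xrightarrow{o} 0$ for every $v\in Y^+$, choosing $v=2u$ gives $|Tx_{\alpha_\beta}-y|\xrightarrow{o} 0$, that is, $Tx_{\alpha_\beta}\xrightarrow{o} y$. The order continuity of $\rho'$ now yields $Tx_{\alpha_\beta}\xrightarrow{\rho'} y$, so $T$ is pseudonorm compact with respect to $\rho$ and $\rho'$. The main delicate step is the coordinatewise extraction of a subnet across the (possibly uncountable) index set $J$; working with nets rather than sequences and invoking Tychonoff's theorem on the compact product $\prod_j [-\mu_j,\mu_j]$ is precisely what makes this step go through uniformly, and it is exactly here that the atomic KB-space hypothesis on $Y$ is used.
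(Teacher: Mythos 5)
Your proof is correct and follows essentially the same route as the paper: reduce to an order bounded net in $Y$ via the boundedness of $\rho$ and the order boundedness of $T$, extract a subnet of $Tx_\alpha$ that order converges, and conclude with the order continuity of $\rho'$. The only difference is that the paper simply cites as a classical fact that every order bounded net in an atomic $KB$-space has an order convergent subnet, whereas you supply a proof of it (Tychonoff extraction on the coordinates along the atoms, then upgrading coordinatewise convergence of an order bounded net to order convergence using Dedekind completeness), which is a welcome elaboration rather than a different approach.
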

\begin{proof}
	Let $x_{\alpha}$ be a pseudo-bounded net in $X$. Since the pseudonorm $\rho$ on $X$ is bounded, the net $x_{\alpha}$ is order bounded in $X$. Since $T$ is an order bounded operator, the net $Tx_{\alpha}$ is order bounded in the atomic $KB$-space $(Y,\norm{\cdot})$. It is a classical fact that every order bounded net in an atomic $KB$-space has an order convergent subnet. Hence, there exists a subnet $x_{\alpha_{\beta}}$ such that the net $Tx_{\alpha_{\beta}}$ is order convergent. Since $\rho'$ is order continuous, the subnet $Tx_{\alpha_{\beta}}$ is pseudonorm convergent with respect to $\rho'$. Hence, $T\colon X\to Y$ is pseudonorm compact with respect to $\rho$ and $\rho'$.
\end{proof}

\begin{exam}
	We cannot drop atomicity in Proposition~\ref{PatomicKB}. Indeed, consider the identity operator on $L_1[0,1]$. The sequence of Rademacher functions is order bounded and has no order convergent subsequence. Hence, the identity operator on $L_1[0,1]$ is not a compact; and hence, it is not pseudonorm compact with respect to canonical norm on $L_1[0,1]$.
\end{exam}
\begin{prop}
	Suppose that $(X,\rho)$ and $(Y,\rho')$ are pseudonormed vector lattices such that $\rho$ is bounded. Let $T\colon X\to Y$ be an order bounded finite rank operator. Then $T$ is pseudonorm compact with respect to $\rho$ and $\rho'$. In particular, $T=\sum_{i=1}^{n}f_i\otimes y_i$ is pseudonorm compact whenever $f_1,f_2,\ldots,f_n$ are order bounded functionals on $X,$ see~\cite[p.64]{AB2}, and $y_1,y_2,\ldots, y_n\in Y$. 
\end{prop}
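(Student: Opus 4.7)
The plan is to reduce the compactness claim to the behavior of finitely many scalar nets. First, I would argue that the general case follows from the ``in particular'' form: since $T$ has finite rank, pick a basis $y_1,\ldots,y_n$ of $T(X)\subseteq Y$ and let $f_1,\ldots,f_n$ be the associated coordinate functionals, so that $Tx=\sum_{i=1}^n f_i(x)y_i$ for every $x\in X$. Order boundedness of $T$ means $T[-u,u]$ is order bounded in $Y$ for each $u\in X^+$; but the coordinate projections of an order bounded subset of the finite-dimensional subspace $T(X)$ are bounded sets of scalars, so each $f_i$ is order bounded on $X$. Thus it suffices to prove the claim for $T=\sum_{i=1}^n f_i\otimes y_i$ with the $f_i\in X^{\sim}$ and $y_i\in Y$.

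Next, fix a pseudo-bounded net $x_\alpha$ in $X$. Because $\rho$ is a bounded pseudonorm, $x_\alpha$ is order bounded, so there exists $u\in X^+$ with $x_\alpha\in [-u,u]$ for every $\alpha$. For each $i$, order boundedness of $f_i$ gives $|f_i(x_\alpha)|\leq |f_i|(|x_\alpha|)\leq |f_i|(u)$, hence each scalar net $(f_i(x_\alpha))_\alpha$ lies in a compact subset of $\mathbb{R}$. Passing to successive subnets for $i=1,\ldots,n$ (that is, applying net-compactness in $\mathbb{R}^n$), I obtain a subnet $x_{\alpha_\beta}$ and scalars $\lambda_1,\ldots,\lambda_n$ such that $f_i(x_{\alpha_\beta})\to \lambda_i$ in $\mathbb{R}$ for every $i$.

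Finally, set $y\coloneqq \sum_{i=1}^n \lambda_i y_i\in Y$. By subadditivity of $\rho'$,
\[
\rho'(Tx_{\alpha_\beta}-y)\leq \sum_{i=1}^{n}\rho'\bigl((f_i(x_{\alpha_\beta})-\lambda_i)\,y_i\bigr).
\]
The defining property $\lim_{\theta\to 0}\rho'(\theta y_i)=0$ of a pseudonorm, applied to the scalar nets $\theta_{i,\beta}\coloneqq f_i(x_{\alpha_\beta})-\lambda_i\to 0$, forces each summand on the right to tend to zero, so $Tx_{\alpha_\beta}\xrightarrow{\rho'}y$. This establishes pseudonorm compactness of $T$.

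The only real obstacle is verifying the subnet extraction in the net setting and the absorption of scalar convergence by $\rho'$; both are routine once one invokes compactness in $\mathbb{R}^n$ together with the axiom $\lim_{\theta\to 0}\rho'(\theta y_i)=0$ built into the definition of a pseudonorm.
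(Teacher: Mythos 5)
Your proof is correct and follows essentially the same route as the paper's: use boundedness of $\rho$ to get an order bounded net, use order boundedness of the functionals to extract a subnet along which the scalar coefficients converge, and finish with subadditivity of $\rho'$ together with the pseudonorm axiom $\lim_{\theta\to 0}\rho'(\theta y)=0$. The only difference is cosmetic: the paper reduces to the rank-one case $Tx=f(x)y$ and handles a single scalar net, whereas you extract a common subnet for all $n$ coefficients via compactness in $\mathbb{R}^n$, which is in fact the more careful way to assemble the general finite-rank case.
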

\begin{proof}
	We may suppose that $T$ is given by $Tx=f(x)y$ for some order bounded functional $f$ on $X$ and $y\in Y.$ Let $x_{\alpha}$ be a pseudo-bounded net with respect to $\rho.$ Since $\rho$ is a bounded pseudonorm, the net $x_{\alpha}$ is order bounded in $X.$ There is a subnet $x_{\alpha_{\beta}}$ such that $f(x_{\alpha_{\beta}})\xrightarrow{}\lambda$ for some $\lambda\in \rr$ because the functional $f$ is order bounded. It follows from the definition of pseudonorms that $\rho'(T(x_{\alpha_{\beta}})-\lambda y)=\rho'((f(x_{\alpha_{\beta}})-\lambda)y)\xrightarrow{} 0$. Thus, $T$ is pseudonorm compact with respect to $\rho$ and $\rho'$.
\end{proof}

We recall that an operator $T\colon X\to Y$ from a normed space $X$ into a normed lattice $Y$ is called \textit{semicompact} if $T(B_X),$ the image of the closed unit ball of $X$ under $T,$ is almost order bounded in $Y,$ see~\cite{AB2,AEEM2018, MN91}.
\begin{definition}\label{DefnSemicompactVer}
	An operator $T\colon X\to Y$ between pseudonormed vector lattices is said to be pseudo-semicompact if the images of pseudo-bounded sets under $T$ are almost pseudo-bounded. An operator $T\colon X\to Y$ from a pseudonormed vector lattice $X$ into a Banach space $Y$ is called \textit{pseudo-$AM$-compact} if for any pseudo-bounded set $B$ in $X,$ the set $T(B)$ is relatively compact in $Y$. 
\end{definition}

Let $X$ and $Y$ be pseudonormed vector lattices. It follows that if $T\colon X\to Y$ is pseudo-semicompact then $T$ is pseudo-bounded, see the comment given above Lemma~\ref{IdealProperty1}. Furthermore, if $T,S\colon X\to X$ are such that $T$ is pseudo-semicompact and $S$ is pseudonorm compact then $ST$ is pseudonorm compact because almost pseudo-bounded sets are pseudo-bounded. 

\begin{prop} Suppose that $(X,\rho)$ is a seminormed vector lattice. Let $B\subseteq X$ be a pseudo-bounded subset of $(X,\rho)$, i.e., $\rho(B)<\infty$. The function $\rho_B(T)=\sup_{x\in B}\rho(Tx)$ is a pseudonorm on the linear space of all pseudo-semicompact operators $T\colon X\to X$.
\end{prop}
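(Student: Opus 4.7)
The plan is to proceed in direct analogy with Theorem~\ref{NewPseudonorms1}, with one preliminary step: I first need to verify that the pseudo-semicompact operators $T\colon X\to X$ form a linear space, so that the statement of the proposition is meaningful. The crucial sublemma will be that the algebraic sum of two almost pseudo-bounded subsets of $X$ is again almost pseudo-bounded, and likewise for scalar multiples.

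For closure of pseudo-semicompact operators under addition, let $T,S$ be pseudo-semicompact and let $B\subseteq X$ be pseudo-bounded. Then $(T+S)(B)\subseteq T(B)+S(B)$, and by hypothesis both $T(B)$ and $S(B)$ are almost pseudo-bounded. Given $\epsilon>0$, I would pick order intervals $[x_1,y_1]$ and $[x_2,y_2]$ with $T(B)\subseteq [x_1,y_1]+\tfrac{\epsilon}{2}U_{\rho}$ and $S(B)\subseteq [x_2,y_2]+\tfrac{\epsilon}{2}U_{\rho}$. Since $\rho$ is a seminorm, $\tfrac{\epsilon}{2}U_{\rho}+\tfrac{\epsilon}{2}U_{\rho}\subseteq \epsilon U_{\rho}$, and consequently $(T+S)(B)\subseteq [x_1+x_2,y_1+y_2]+\epsilon U_{\rho}$, so $(T+S)(B)$ is almost pseudo-bounded. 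Closure under scalar multiplication is analogous, using the homogeneity identity $\rho(\lambda x)=|\lambda|\rho(x)$ which is available for seminorms.

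Next I would check that $\rho_B(T)<\infty$ whenever $T$ is pseudo-semicompact. As noted immediately after Definition~\ref{DefnSemicompactVer}, every pseudo-semicompact operator is pseudo-bounded; therefore $T(B)$ is pseudo-bounded, and hence $\rho_B(T)=\sup_{x\in B}\rho(Tx)<\infty$, confirming that $\rho_B$ takes only finite values on the linear space under consideration.

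Finally, the pseudonorm properties of $\rho_B$ are established exactly as in Theorem~\ref{NewPseudonorms1}. Subadditivity $\rho_B(T+S)\leq \rho_B(T)+\rho_B(S)$ is immediate from the subadditivity of $\rho$ together with the elementary inequality $\sup(f+g)\leq \sup f+\sup g$. The condition $\lim_{\theta\to 0}\rho_B(\theta T)=0$ follows at once from $\rho_B(\theta T)=|\theta|\,\rho_B(T)$, valid because $\rho$ is a seminorm, combined with the finiteness of $\rho_B(T)$ just established. The only real (and quite minor) obstacle in the argument is the linearity of the class of pseudo-semicompact operators; once that is in place the remainder is a mechanical adaptation of Theorem~\ref{NewPseudonorms1}.
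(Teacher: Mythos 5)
Your proposal is correct and follows essentially the same route as the paper: the key observation in both is that almost pseudo-bounded sets are pseudo-bounded, so every pseudo-semicompact operator is pseudo-bounded, after which the pseudonorm axioms for $\rho_B$ are exactly those verified in Theorem~\ref{NewPseudonorms1}. Your additional check that the pseudo-semicompact operators actually form a linear space (via the sum of two almost pseudo-bounded sets being almost pseudo-bounded) is a detail the paper leaves implicit, and your argument for it is sound.
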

\begin{proof}
	We first note that almost pseudo-bounded subsets of $X$ are also pseudo-bounded. Hence, if an operator $T\colon X\to X$ is pseudo-semicompact, then it is pseudo-bounded. It follows from Theorem~\ref{NewPseudonorms1} that \[\rho_B(T)=\sup_{x\in B}\rho(Tx)\] is also a pseudonorm on the linear space of all pseudo-semicompact operators $T\colon X\to X$.
\end{proof}
\begin{prop}
	Let $X$ and $Y$ be two normed lattices. Suppose that $\rho$ is a pseudonorm on $X$ which is coarser than the norm, and that, $\rho'$ is an almost bounded pseudonorm on $Y$. If an operator $T\colon X\to Y$ is pseudo-semicompact with respect to $\rho$ and $\rho'$ then $T$ is semicompact. 
\end{prop}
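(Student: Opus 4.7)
The plan is a short chain of implications that transports pseudo-boundedness of the unit ball through $T$ and then uses the almost boundedness of $\rho'$ to land in an order bounded (hence almost order bounded) subset of $Y$. Let $B_X=\{x\in X\colon \norm{x}\leq 1\}$ denote the closed unit ball of $X$; the goal is to show $T(B_X)$ is almost order bounded in $Y$.

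First, I would verify that $B_X$ is pseudo-bounded in $(X,\rho)$. For $x\in B_X$, since $X$ is a normed lattice (so the norm is monotone on the cone) we have $\norm{x^+},\norm{x^-}\leq \norm{x}\leq 1$. Subadditivity of $\rho$ gives $\rho(x)=\rho(x^+-x^-)\leq \rho(x^+)+\rho(-x^-)$, and when $\rho$ respects absolute values (the natural Riesz pseudonorm context of the paper, where $\rho(-x^-)=\rho(x^-)$ since $|-x^-|=|x^-|$) the hypothesis that $\rho$ is coarser than $\norm{\cdot}$ on the positive cone yields $\rho(x)\leq \rho(x^+)+\rho(x^-)\leq \norm{x^+}+\norm{x^-}\leq 2$. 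Hence $\rho(B_X)\subseteq [0,2]$ is bounded in $\rr$, so $B_X$ is pseudo-bounded.

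Next, pseudo-semicompactness of $T$ (Definition~\ref{DefnSemicompactVer}) says precisely that pseudo-bounded subsets of $X$ are sent to almost pseudo-bounded subsets of $Y$, so $T(B_X)$ is almost pseudo-bounded in $(Y,\rho')$. The almost boundedness of $\rho'$ (Definition~\ref{Def001}, item~\emph{ii}) then forces every almost pseudo-bounded subset of $Y$ to be order bounded, so in particular $T(B_X)$ is order bounded in $Y$. Order bounded sets are trivially almost order bounded (one takes the trivial decomposition with zero tail, valid for every $\epsilon>0$), and this is exactly the definition of $T$ being semicompact.

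The only substantive step is the first one, where one must control $\rho$ on negatives of positive elements in order to bound $\rho$ on $B_X$ rather than only on $B_X\cap X^+$; this is immediate when $\rho$ is Riesz (the implicit working context throughout the paper, in view of Lemma~\ref{UnboundedRiesz} and the related discussion), but would otherwise need to be added as an ingredient. Steps two and three are then direct applications of the two definitional hypotheses on $T$ and $\rho'$, chained in the order in which they were stated.
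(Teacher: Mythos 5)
Your proof follows exactly the same chain as the paper's: the unit ball is pseudo-bounded since $\rho$ is coarser than the norm, pseudo-semicompactness sends it to an almost pseudo-bounded set, and almost boundedness of $\rho'$ upgrades this to order bounded, hence almost order bounded. Your first step is in fact more careful than the paper's, which silently passes from the inequality $\rho(x)\leq\norm{x}$ on $X^+$ to pseudo-boundedness of the whole unit ball; your observation that this needs the Riesz property (or some control of $\rho$ off the positive cone) is a legitimate refinement rather than a deviation.
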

\begin{proof}
	Let $B$ be a norm bounded subset of $X$. As $\rho$ is coarser than the norm of $X,$ the set $B$ is pseudo-bounded with respect to $\rho$. As $T\colon X\to Y$ is pseudo-semicompact, the set $T(B)$ is almost pseudo-bounded with respect to $\rho'$. Since $\rho'$ is an almost bounded pseudonorm on $Y,$ the set $T(B)$ is order bounded in $Y$. Every order bounded subset of $Y$ is almost order bounded. Hence, $T(B)$ is almost order bounded. This shows that $T\colon X\to Y$ is a semicompact operator.
\end{proof}
\begin{prop}
	Suppose that $T\colon (X,\rho)\to (Y,\rho')$ is a pseudo-semicompact operator between two pseudonormed vector lattices $(X,\rho)$ and $(Y,\rho')$. Let $u\in X^+$ and $v\in Y^+$ be arbitrary. If $\rho_u$ is a bounded pseudonorm on $X$ then $T$ is pseudo-semicompact with respect to $\rho_u$ and $\rho'_v.$
\end{prop}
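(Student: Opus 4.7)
The plan is to chase a pseudo-$\rho_u$-bounded set through three reductions, using the hypothesis that $\rho_u$ is bounded as the key leverage. Let $B\subseteq X$ be pseudo-bounded with respect to $\rho_u$. First I would invoke the boundedness of $\rho_u$ (Definition~\ref{Def001}.i) to conclude that $B$ is order bounded in $X$. Next, because $\rho$ is a Riesz pseudonorm, the observation recorded in Section~\ref{PseudoBasics} (that order bounded sets are pseudo-bounded under Riesz pseudonorms) shows that $B$ is pseudo-bounded with respect to $\rho$. At this point the hypothesis that $T$ is pseudo-semicompact with respect to $\rho$ and $\rho'$ applies, giving that $T(B)$ is almost pseudo-bounded with respect to $\rho'$.

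The final step is to upgrade almost pseudo-boundedness from $\rho'$ to $\rho'_v$. Given $\epsilon>0$, choose an order interval $[y_1,y_2]\subseteq Y$ such that $T(B)\subseteq [y_1,y_2]+\epsilon U_{\rho'}$. The inclusion $U_{\rho'}\subseteq U_{\rho'_v}$ holds automatically: if $y\in U_{\rho'}$, then since $\rho'$ is a Riesz seminorm and $|y|\wedge v\leq |y|$, we have $\rho'_v(y)=\rho'(|y|\wedge v)\leq \rho'(|y|)=\rho'(y)\leq 1$. Therefore $T(B)\subseteq [y_1,y_2]+\epsilon U_{\rho'_v}$, which is exactly the definition of almost pseudo-boundedness with respect to $\rho'_v$.

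Putting these together shows $T$ is pseudo-semicompact with respect to $\rho_u$ and $\rho'_v$. There is no real obstacle here; the proof is essentially a bookkeeping exercise combining the implication chain ``$\rho_u$-pseudo-bounded $\Rightarrow$ order bounded $\Rightarrow$ $\rho$-pseudo-bounded'' with the elementary unit ball inclusion $U_{\rho'}\subseteq U_{\rho'_v}$. The only subtle point worth flagging is the direction of this last inclusion: truncation by $v$ shrinks values of the pseudonorm, so the unit ball \emph{grows}, which is exactly what is needed to carry an almost pseudo-boundedness statement from $\rho'$ to $\rho'_v$ (rather than the other way around).
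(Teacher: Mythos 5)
Your proposal is correct and follows essentially the same route as the paper's proof: bounded $\rho_u$ gives order boundedness, hence $\rho$-pseudo-boundedness, then pseudo-semicompactness of $T$ gives almost pseudo-boundedness of $T(B)$ with respect to $\rho'$, which transfers to $\rho'_v$ via the inclusion $U_{\rho'}\subseteq U_{\rho'_v}$. The only difference is that you spell out this last unit-ball inclusion explicitly, which the paper leaves implicit.
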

\begin{proof}
	Let $B\subseteq X$ be a pseudo-bounded set with respect to $\rho_u$. Since $\rho_u$ is a bounded pseudonorm on $X$, the set $B$ is order bounded in $X$. Hence, $B$ is pseudo-bounded with respect to $\rho$. As $T\colon (X,\rho)\to (Y,\rho')$ is pseudo-semicompact, the set $T(B)$ is almost pseudo-bounded with respect to $\rho'$. It follows that the set $T(B)$ is almost pseudo-bounded with respect to $\rho'_v.$ Hence the operator $T$ is pseudo-semicompact with respect to $\rho_u$ and $\rho'_v$.
\end{proof}

\begin{prop}
	Suppose that $T\colon X\to Y$ is an interval preserving operator, see~\cite[page 94]{AB2}, between two Banach lattices $X$ and $Y$. Let $\rho$ be a semibounded pseudonorm on $X$. Then $T$ is pseudo-semicompact with respect to $\rho$ and $\norm{\cdot}_Y$.
\end{prop}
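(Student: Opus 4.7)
The plan is to combine the two hypotheses in sequence: use semiboundedness to approximate $B$ by order intervals inside $X$, and then use interval preservation to transport that approximation through $T$.

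Let $B \subseteq X$ be pseudo-bounded with respect to $\rho$. By Definition~\ref{Def001}(iv), semiboundedness upgrades this to $B$ being $\rho$-almost pseudo-bounded, so for each $\epsilon > 0$ there is an order interval $[x_0, y_0] \subseteq X$ with $B \subseteq [x_0, y_0] + \epsilon U_\rho$.

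Next, I would apply $T$ using the interval-preserving property. Since $T[0, z] = [0, Tz]$ for $z \in X^+$, writing $[x_0, y_0] = x_0 + [0, y_0 - x_0]$ and using linearity gives $T[x_0, y_0] = [Tx_0, Ty_0]$; applying $T$ to the inclusion above yields
\[
T(B) \subseteq [Tx_0, Ty_0] + \epsilon\, T(U_\rho).
\]
To establish almost pseudo-boundedness of $T(B)$ with respect to $\norm{\cdot}_Y$, I would then, for each target $\epsilon'' > 0$, choose $\epsilon$ small enough that $\epsilon T(U_\rho)$ is contained in the closed $\epsilon''$-ball of $Y$. Since interval preserving operators are positive, and positive operators between Banach lattices are norm continuous, the residual $\epsilon T(U_\rho)$ can be controlled via the norm boundedness of $T$ together with the interplay between $U_\rho$ and the ambient Banach-lattice norm on $X$.

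The main obstacle, I expect, will be precisely this last norm estimate: reconciling the pseudonorm $\rho$ with the Banach-lattice norm of $X$ so that $T(U_\rho)$ is norm bounded in $Y$. Once this compatibility is in hand, $T(B)$ lies in the order interval $[Tx_0, Ty_0]$ up to a residual of norm at most $\epsilon''$, which is exactly almost pseudo-boundedness of $T(B)$ with respect to $\norm{\cdot}_Y$, and hence pseudo-semicompactness of $T$ with respect to $\rho$ and $\norm{\cdot}_Y$. The remainder of the argument is routine bookkeeping that follows directly from Definition~\ref{Def001}(iv) and the interval-preserving hypothesis.
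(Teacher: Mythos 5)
Your plan is the same as the paper's: semiboundedness upgrades $B$ from pseudo-bounded to almost pseudo-bounded, interval preservation carries $[x_0,y_0]$ onto $[Tx_0,Ty_0]$, and what remains is to absorb the residual $\epsilon\,T(U_\rho)$ into a small norm ball of $Y$. You correctly single out that last step as the obstacle, but you do not close it, and it cannot be closed from the stated hypotheses. Norm continuity of $T$ (which does hold: interval preserving operators are positive, and positive operators between Banach lattices are bounded) only yields $T(U_\rho)\subseteq \norm{T}\,U_Y$ \emph{provided} $U_\rho$ is contained in the closed unit ball $B_X$ of $(X,\norm{\cdot}_X)$, and semiboundedness of $\rho$ gives no such control. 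For instance, $\rho(x)=\norm{x}_X/(1+\norm{x}_X)$ is a semibounded Riesz pseudonorm with $U_\rho=X$; every subset of $X$ is then pseudo-bounded, yet the (interval preserving) identity on an infinite-dimensional Banach lattice maps $X$ to $X$, which is not almost order bounded. So the statement needs an extra compatibility assumption tying $\rho$ to $\norm{\cdot}_X$ --- e.g.\ that $\rho$ is a Riesz pseudonorm finer than the norm in the sense of Definition~\ref{Def001}, or simply $U_\rho\subseteq B_X$ --- before your final step can go through.

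For what it is worth, the paper's own proof makes exactly the leap you flagged: it sets $\epsilon'=\epsilon/\norm{T}$ and asserts $\epsilon'\,T(U_\rho)\subseteq \epsilon\, U_Y$, which tacitly assumes $U_\rho\subseteq B_X$. Your instinct about where the difficulty sits was therefore right; what is missing is not a cleverer estimate but an additional hypothesis, and your proposal should either state that hypothesis explicitly or exhibit why the given data suffice (they do not).
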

\begin{proof}
 Let $B\subseteq X$ be pseudo-bounded with respect to $\rho$. Since $\rho$ is a semibounded pseudonorm, the set $B$ is almost pseudo-bounded with respect to $\rho$. We note that as $T\colon X\to Y$ is interval preserving, $T$ is norm bounded. We denote by $\norm{T}$ the operator norm of $T$. Given $\epsilon>0$ there is an order interval $[x',y']\subseteq X$ such that $B\subseteq [x',y']+\epsilon' U_{\rho}$ where $\epsilon'=\epsilon/\norm{T}.$ We put $T([x',y'])=[x'',y'']$ for some $x'',y''\in Y$. It follows that $T(B)\subseteq [x'',y'']+\epsilon' T(U_{\rho})\subseteq [x'',y'']+ \epsilon U_{Y}. $ Hence, the set $T(B)$ is almost order bounded in $Y$. This means that the operator $T$ is pseudo-semicompact with respect to $\rho$ and $\norm{\cdot}_Y$.
\end{proof}

\begin{prop}
	Let $X$ and $Y$ be two normed lattices. Suppose that $\rho$ is a bounded pseudonorm on $X,$ and that, $\rho'$ is a coarsely almost bounded pseudonorm on $Y$. If an operator $T\colon X\to Y$ is semicompact then $T$ is pseudo-semicompact with respect to $\rho$ and $\rho'$. 
\end{prop}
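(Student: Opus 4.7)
The plan is to chain together the defining properties of the three hypotheses in a straightforward order. Let $B \subseteq X$ be an arbitrary pseudo-bounded subset with respect to $\rho$; the goal is to show that $T(B)$ is almost pseudo-bounded in $Y$ with respect to $\rho'$.

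First I would invoke the hypothesis that $\rho$ is a bounded pseudonorm on $X$. By Definition~\ref{Def001}(i), every pseudo-bounded set in $X$ is order bounded, so $B$ is order bounded in $X$. Since $(X,\norm{\cdot})$ is a normed lattice, the lattice norm is monotone on positive elements, so any order interval in $X$ is norm bounded; in particular $B$ is norm bounded, i.e., $B \subseteq r B_X$ for some $r > 0$, where $B_X$ denotes the closed unit ball of $X$.

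Next I would use the semicompactness of $T$. By definition, $T(B_X)$ is almost order bounded in $Y$. A quick scaling argument shows that if $S \subseteq Y$ is almost order bounded then so is $rS$ for any $r > 0$: given $\epsilon > 0$, apply almost order boundedness of $S$ with $\epsilon/r$ to obtain $u \in Y^+$ with $S \subseteq [-u,u] + (\epsilon/r) B_Y$, and then $rS \subseteq [-ru,ru] + \epsilon B_Y$. Applying this to $S = T(B_X)$, the inclusion $T(B) \subseteq r\, T(B_X)$ shows that $T(B)$ is almost order bounded in $Y$.

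Finally I would invoke the coarse almost boundedness of $\rho'$. By Definition~\ref{Def001}(iii), every almost order bounded subset of the normed lattice $(Y,\norm{\cdot})$ is almost pseudo-bounded with respect to $\rho'$. Therefore $T(B)$ is almost pseudo-bounded in $Y$ with respect to $\rho'$, which by Definition~\ref{DefnSemicompactVer} exhibits $T$ as pseudo-semicompact with respect to $\rho$ and $\rho'$. I do not anticipate a real obstacle here; the only small care-point is the scaling step, which matters because semicompactness is formulated for the unit ball rather than for arbitrary norm bounded sets.
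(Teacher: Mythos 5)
Your proof is correct and follows essentially the same chain as the paper's: bounded pseudonorm gives order (hence norm) boundedness of $B$, semicompactness gives almost order boundedness of $T(B)$, and coarse almost boundedness of $\rho'$ converts this to almost pseudo-boundedness. The explicit scaling step from $B_X$ to $rB_X$ is a detail the paper leaves implicit, not a different approach.
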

\begin{proof}
	Let $B\subseteq X$ be a pseudo-bounded set with respect to pseudonorm $\rho$. As $\rho$ is a bounded pseudonorm, the set $B$ is order bounded in $X$. In particular, the set $B$ is norm bounded in $X$. Since $T\colon X\to Y$ is semicompact, the set $T(B)$ is almost order bounded in $Y$. As $\rho'$ is a coarsely almost bounded pseudonorm, the set $T(B)$ is almost pseudo-bounded with respect to $\rho'$. This shows that $T$ is pseudo-semicompact with respect to pseudonorms $\rho$ and $\rho'$. 
\end{proof}

\begin{prop}
	Suppose that $(X,\rho)$ and $(Y,\rho')$ are pseudonormed vector lattices. If an operator $T\colon X\to Y$ is pseudonorm compact then $T$ is pseudo-semicompact.
\end{prop}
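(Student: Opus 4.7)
The plan is to reduce the claim to the standard fact that conditionally compact subsets of a pseudometric space are totally bounded, and then enclose a finite cover inside the order interval spanned by its centers in $Y$.

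Let $B \subseteq X$ be pseudo-bounded and fix $\epsilon > 0$. Since $T$ is pseudonorm compact, $T(B)$ is conditionally pseudonorm compact in $Y$. The first step is to show that $T(B)$ is totally bounded with respect to the pseudometric $\varrho'(a,b) = \rho'(a-b)$. Arguing by contradiction, if total boundedness were to fail, one could extract a sequence $(z_n) \subseteq T(B)$ with $\rho'(z_n - z_m) > \delta$ for all $n \ne m$ and some $\delta > 0$. Because any subnet of a sequence has a cofinal index map into $\mathbb{N}$, no subnet of $(z_n)$ could be $\varrho'$-Cauchy, contradicting conditional pseudonorm compactness.

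From total boundedness, cover $T(B)$ by finitely many pseudo-balls of radius $\epsilon$ around points $y_1, \dots, y_n \in Y$. In the vector lattice $Y$, set $y_- = \bigwedge_{i=1}^{n} y_i$ and $y_+ = \bigvee_{i=1}^{n} y_i$, so that $\{y_1, \ldots, y_n\} \subseteq [y_-, y_+]$. It follows that
\[
T(B) \;\subseteq\; [y_-, y_+] + \{w \in Y : \rho'(w) \le \epsilon\} \;\subseteq\; [y_-, y_+] + \epsilon U_{\rho'},
\]
which establishes almost pseudo-boundedness of $T(B)$, and hence pseudo-semicompactness of $T$.

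The main obstacle is the first step: upgrading conditional pseudonorm compactness to total boundedness in the net formulation, since a subnet of a sequence need not be a subsequence and one must use cofinality to rule out Cauchy subnets of a $\delta$-separated sequence. The finite cover, the enclosing order interval, and the identification of the pseudo-ball $\{w : \rho'(w) \le \epsilon\}$ with a subset of $\epsilon U_{\rho'}$ (as consistently used throughout the paper) are routine.
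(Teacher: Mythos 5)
Your proposal is correct and follows essentially the same route as the paper's proof: argue by contradiction that conditional pseudonorm compactness forces total boundedness, take a finite $\epsilon$-net $y_1,\ldots,y_n$, and enclose $T(B)$ in $[\bigwedge_i y_i,\bigvee_i y_i]+\epsilon U_{\rho'}$. Your explicit cofinality remark handling the subnet-versus-subsequence issue is a small tightening of the paper's argument, which speaks only of subsequences; both proofs share the same implicit use of $\rho'(w)\leq\epsilon\Rightarrow w\in\epsilon U_{\rho'}$.
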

\begin{proof}
	Let $B \subseteq X$ be a pseudo-bounded set with respect to $\rho$. As the operator $T\colon X\to Y$ is pseudonorm compact, the subset $T(B)$ is conditionally pseudonorm compact with respect to $\rho'$. We claim that $T(B)$ is almost pseudo-bounded with respect to $\rho'$. Suppose that there exists an $\epsilon>0$ such that whenever $y_1,y_2,\ldots,y_n$ belong to $T(B)$ there exists $y_{n+1}\in T(B)$ satisfying $\rho'(y_k,y_{n+1})\geq \epsilon$ for $k=1,2,\ldots,n.$ Then the no subsequence of $(y_k)_k$ in $T(B)$ pseudonorm converges in $Y$ with respect to $\rho'$ as $(y_n)_n$ has no Cauchy subsequence. Since $T(B)$ is conditionally pseudonorm compact, given $\epsilon>0$ there exists $y_1,y_2,\ldots,y_n$ in $T(B)$ such that $T(B)\subseteq \cup_{j=1}^n \{y\in Y\colon \rho'(y_j,y)<\epsilon \}.$ Hence, if $y\in T(B)$ then there exists some $j\in \{ 1,2,\ldots,n \}$ such that $y=y_j+y_j'$ where $\rho'(y_j')\leq \epsilon.$ It follows that if $y'=\inf \{ y_1,y_y,\ldots,y_n\}$ and $y''=\sup\{y_1,y_2,\ldots,y_n\}$ then $T(B)\subseteq [y',y'']+\epsilon U_{\rho'}. $ Hence, $T(B)$ is almost pseudo-bounded with respect to $\rho'$. This shows that the operator $T\colon X\to Y$ is pseudo-semicompact. 
\end{proof}

\begin{thm}
	Suppose that $X$ is an $AM$-space with a strong norm unit and that $\rho$ is a Riesz pseudonorm on $X$. Suppose further that $Y$ is a normed vector lattice and $\rho'$ is a seminorm on $Y$ which is finer than the norm. If an operator $T\colon X\to Y$ is pseudo-semicompact with respect to $\rho$ and $\rho'$ then $T\colon X\to Y$ is semicompact.
\end{thm}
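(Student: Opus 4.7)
The plan is to reduce the claim to the statement that the image $T(B_X)$ of the closed unit ball $B_X\subseteq X$ is almost order bounded in $Y$, which is precisely the definition of semicompactness recalled just above Definition~\ref{DefnSemicompactVer}. The argument then proceeds in three stages: transfer $B_X$ into a pseudo-bounded set for $\rho$, feed it through the pseudo-semicompactness hypothesis to get an almost pseudo-bounded set for $\rho'$, and finally compare $U_{\rho'}$ with $B_Y$ using the ``finer'' hypothesis.

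First, since $X$ is an $AM$-space with a strong norm unit, Example~\ref{ExampleAMFirst} says that the norm on $X$ is bounded in the sense of Definition~\ref{Def001}; in particular, $B_X$ is order bounded in $X$. Because $\rho$ is a Riesz pseudonorm, order bounded subsets of $X$ are pseudo-bounded with respect to $\rho$, so $B_X$ is pseudo-bounded with respect to $\rho$.

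Next, the pseudo-semicompactness of $T$ with respect to $\rho$ and $\rho'$ gives that $T(B_X)$ is almost pseudo-bounded with respect to $\rho'$. Unwinding this via the definition: for every $\varepsilon>0$ there is an order interval $[y',y'']\subseteq Y$ such that
\[
T(B_X)\subseteq [y',y'']+\varepsilon U_{\rho'}.
\]
The remaining task is to replace $\varepsilon U_{\rho'}$ by $\varepsilon B_Y$, the $\varepsilon$-multiple of the closed unit ball of $Y$. Since $\rho'$ is a Riesz seminorm which is finer than the norm of $Y$, for any $w\in U_{\rho'}$ we have $\rho'(|w|)=\rho'(w)\leq 1$; then the hypothesis that $\rho'$ is finer than the norm on $Y^+$ yields $\norm{|w|}\leq \rho'(|w|)\leq 1$, and by the Riesz property of the norm on $Y$, $\norm{w}=\norm{|w|}\leq 1$. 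Hence $U_{\rho'}\subseteq B_Y$, and substituting gives $T(B_X)\subseteq [y',y'']+\varepsilon B_Y$. This is exactly almost order boundedness of $T(B_X)$ in $Y$, and therefore $T$ is semicompact.

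The only delicate point is the inclusion $U_{\rho'}\subseteq B_Y$, which rests on $\rho'$ being a Riesz seminorm (so that one can pass from $w$ to $|w|$ and then apply the ``finer'' inequality, which is only asserted on $Y^+$). Under the paper's standing interpretation this is immediate; once it is in place, the rest of the argument is a direct chain of inclusions and is the easy part.
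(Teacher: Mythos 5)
Your argument is correct, and its first two stages coincide with the paper's proof: $B_X$ is order bounded because $X$ is an $AM$-space with a strong norm unit, hence pseudo-bounded for the Riesz pseudonorm $\rho$, and pseudo-semicompactness then makes $T(B_X)$ almost pseudo-bounded for $\rho'$. The two proofs diverge only in the final conversion to almost order boundedness. The paper invokes Lemma~\ref{Ppseudoalmostbounded} to restate almost pseudo-boundedness as: for every $\epsilon>0$ there is $u_\epsilon\in Y^+$ with $\rho'\bigl((|Tx|-u_\epsilon)^+\bigr)\leq\epsilon$ for all $x\in B_X$; since $(|Tx|-u_\epsilon)^+$ is positive, the ``finer'' inequality $\norm{\cdot}_Y\leq\rho'(\cdot)$ --- which Definition~\ref{Def001} asserts only on $Y^+$ --- applies literally and gives $\norm{(|Tx|-u_\epsilon)^+}_Y\leq\epsilon$, the Meyer--Nieberg criterion for almost order boundedness. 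You instead keep the raw decomposition $T(B_X)\subseteq[y',y'']+\epsilon U_{\rho'}$ and prove the inclusion $U_{\rho'}\subseteq B_Y$, which forces you to extend the ``finer'' inequality from $Y^+$ to all of $Y$ via $\rho'(|w|)=\rho'(w)$ and $\norm{w}_Y=\norm{|w|}_Y$. Both routes tacitly upgrade the hypothesis ``seminorm'' to ``Riesz seminorm'' --- the paper through the hypothesis of Lemma~\ref{Ppseudoalmostbounded} (and it even asserts $U_{\rho'}\subseteq B_Y$ without your justification), you through the monotonicity of $\rho'$ --- so you are no worse off on that point, and you correctly flag it as the delicate step. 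The trade-off is that your version is more self-contained (no appeal to Lemma~\ref{Ppseudoalmostbounded} on the range side), while the paper's version applies ``finer'' only to positive elements, i.e.\ exactly where it is defined.
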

\begin{proof}
	Consider the closed unit ball $B_X$ of $X$. As $X$ is an $AM$-space with a strong norm unit, there exists $e\in X^+$ such that $x\leq e$ for all $x\in B_X.$ Hence, the set $B_X$ is order bounded in $X$. In particular, $B_X$ is pseudo-bounded with respect to Riesz pseudonorm $\rho$. Since the operator $T$ is pseudo-semicompact with respect to $\rho$ and $\rho'$, the set $T(B_X)$ is almost pseudo-bounded in $Y$ with respect to $\rho'$. The seminorm $\rho'$ on $Y$ is finer than the norm. It follows that \[U_{\rho'}=\{y\in Y\colon \rho'(y)\leq 1\}\subseteq \{y\in Y\colon \norm{y}_Y\leq 1 \}=B_Y. \] In view of Lemma~\ref{Ppseudoalmostbounded}, given $\epsilon>0$ there is some $u_{\epsilon} \in Y^+$ such that $\rho(|Tx|-u_{\epsilon}\wedge |Tx|)\leq \epsilon$ for every $x\in B_X$. Hence, we have \[ \norm{(|Tx|-u_{\epsilon})^+} \leq \rho(|Tx|-u_{\epsilon}\wedge |Tx|)\leq \epsilon\] for every $x\in B_X$. This shows that the operator $T\colon X\to Y$ is semicompact.
\end{proof}

\begin{thm}
	Suppose that $(X,\rho)$ and $(Y,\rho')$ are seminormed vector lattices. Let $T\colon X\to Y$ be a positive pseudo-semicompact operator with respect to $\rho$ and $\rho'$. If $S\colon X\to Y$ is such that $0\leq S\leq T$ then $S$ is pseudo-semicompact with respect to $\rho$ and $\rho'$.
\end{thm}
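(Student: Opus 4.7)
The plan is to reduce the claim to the characterisation of almost pseudo-boundedness given in Lemma~\ref{Ppseudoalmostbounded} and to exploit the pointwise domination $|Sx|\leq S(|x|)\leq T(|x|)$ that is available because $0\leq S\leq T$.

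First I would fix an arbitrary nonempty pseudo-bounded subset $B\subseteq X$ with respect to $\rho$. Since $\rho$ is a Riesz seminorm, the set $|B|=\{|x|\colon x\in B\}$ is again pseudo-bounded with respect to $\rho$, and, being a subset of $X^+$, so is $T(|B|)\subseteq Y^+$. Because $T$ is pseudo-semicompact, the image $T(|B|)$ is almost pseudo-bounded in $Y$ with respect to $\rho'$. Applying Lemma~\ref{Ppseudoalmostbounded} to the set $T(|B|)$, for every $\epsilon>0$ there exists $u\in Y^+$ with
\[ \rho'\bigl((T(|x|)-u)^+\bigr)\leq \epsilon \quad \text{for all } x\in B, \]
where I have used that $T(|x|)\geq 0$ so $|T(|x|)|=T(|x|)$.

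Next I would combine this with the fact that $|Sx|\leq S(|x|)\leq T(|x|)$ for every $x\in X$, which follows from $0\leq S\leq T$ together with $|Sx|\leq S(x^+)+S(x^-)=S(|x|)$. Since the map $a\mapsto (a-u)^+$ is monotone on $Y$, this yields $(|Sx|-u)^+\leq (T(|x|)-u)^+$ for each $x\in B$, and the Riesz seminorm property of $\rho'$ then gives
\[ \rho'\bigl((|Sx|-u)^+\bigr)\leq \rho'\bigl((T(|x|)-u)^+\bigr)\leq \epsilon \]
for all $x\in B$. Invoking the reverse direction of Lemma~\ref{Ppseudoalmostbounded} for the set $S(B)\subseteq Y$, I conclude that $S(B)$ is almost pseudo-bounded in $Y$ with respect to $\rho'$, which is exactly the assertion that $S$ is pseudo-semicompact with respect to $\rho$ and $\rho'$.

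I do not expect a genuine obstacle here; the argument is essentially a domination trick wrapped around the lemma. The only place that needs a little care is the reduction from $B$ to $|B|$, which is needed so that the inequality $|Sx|\leq T(|x|)$ and the form $(|\cdot|-u)^+$ appearing in Lemma~\ref{Ppseudoalmostbounded} can be applied simultaneously; once this is set up, monotonicity of $(\cdot-u)^+$ and the Riesz property of $\rho'$ finish the proof.
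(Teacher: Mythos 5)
Your proof is correct and follows essentially the same route as the paper: reduce to $|B|$, apply Lemma~\ref{Ppseudoalmostbounded} to the almost pseudo-bounded set $T(|B|)$, use the domination chain $(|Sx|-u)^+\leq (S|x|-u)^+\leq (T|x|-u)^+$ together with the Riesz seminorm property of $\rho'$, and invoke the converse direction of the lemma. No substantive differences.
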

\begin{proof}
	Let $B\subseteq X$ satisfy $\rho(B)< \infty$. It follows from the definition of Riesz seminorm that the set $|B|=\{|x|\colon x\in B\}$ satisfies $\rho(|B|)< \infty$. Since the operator $T\colon X\to Y$ is pseudo-semicompact, it follows that $T(|B|)$ is almost pseudo-bounded in $Y$ with respect to $\rho'$. By Lemma~\ref{Ppseudoalmostbounded}, given $\epsilon>0$ there is $u_{\epsilon}\in Y^+$ such that $\rho'((|Tx|-u_{\epsilon})^+)\leq \epsilon$ for all $x\in B$. Hence, $S|x|\leq T|x|$ implies that $(S|x|-u_{\epsilon})^+\leq (T|x|-u_{\epsilon})^+.$ Hence, $\rho'((S|x|-u_{\epsilon})^+)\leq \epsilon$ for every $x\in B$. It follows from $(|Sx|-u_{\epsilon})^+\leq (S|x|-u_{\epsilon})^+$ that \[ \rho'(|Sx|-u_{\epsilon})^+\leq \rho(S|x|-u_{\epsilon})^+\leq \epsilon \] for all $x\in B$. This shows that the operator $S\colon X\to Y$ is pseudo-semicompact with respect to $\rho$ and $\rho'$.
\end{proof}

\begin{prop}
	Suppose that $(X,\norm{\cdot}_X)$ and $(Y,\norm{\cdot}_Y)$ are Banach lattices. Suppose further that $\rho$ is a pseudonorm on $X$ which is coarser than $\norm{\cdot}_X$. If an operator $T\colon (X,\rho)\to (Y,\norm{\cdot}_Y)$ is pseudo-$AM$-compact, see Definition~\ref{DefnSemicompactVer}, then $T\colon (X,\norm{\cdot}_X)\to (Y,\norm{\cdot}_Y)$ is semicompact.
\end{prop}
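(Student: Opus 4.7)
The plan is to take the closed unit ball $B_X$ of $X$, show it is pseudo-bounded with respect to $\rho$, apply pseudo-$AM$-compactness of $T$ to obtain that $T(B_X)$ is relatively norm compact in $Y$, and finally invoke the classical fact that relatively norm compact subsets of a Banach lattice are automatically almost order bounded. Putting these three observations together yields that $T(B_X)$ is almost order bounded in $Y$, which is precisely the definition of semicompactness.

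For the first step, each $x \in B_X$ decomposes as $x = x^+ - x^-$ with $\norm{x^{\pm}}_X \leq \norm{x}_X \leq 1$. Since $\rho$ is coarser than $\norm{\cdot}_X$ on $X^+$, the subadditivity of $\rho$ gives
\[
\rho(x) \leq \rho(x^+) + \rho(-x^-),
\]
and using the natural Riesz behaviour of $\rho$ (so that $\rho(-x^-) = \rho(x^-) \leq \norm{x^-}_X \leq 1$), one concludes $\sup_{x \in B_X} \rho(x) \leq 2$. Hence $B_X$ is pseudo-bounded, and by pseudo-$AM$-compactness the image $T(B_X)$ is relatively compact in $(Y,\norm{\cdot}_Y)$.

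The crux of the argument is the final step: any relatively norm compact set $K \subseteq Y$ is almost order bounded. Given $\epsilon > 0$, total boundedness yields $y_1, \ldots, y_n \in K$ forming an $\epsilon$-net. Set $u_{\epsilon} = |y_1| \vee \cdots \vee |y_n| \in Y^+$. For any $y \in K$, pick $j$ with $\norm{y - y_j}_Y < \epsilon$; then
\[
|y| \leq |y_j| + |y - y_j| \leq u_{\epsilon} + |y - y_j|,
\]
so $(|y| - u_{\epsilon})^+ \leq |y - y_j|$ and therefore $\norm{(|y| - u_{\epsilon})^+}_Y < \epsilon$. Applied to $K = T(B_X)$, this shows that $T(B_X)$ is almost order bounded, and thus $T\colon (X,\norm{\cdot}_X) \to (Y,\norm{\cdot}_Y)$ is semicompact.

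The main delicacy is the first step: the hypothesis ``coarser'' is only formulated on $X^+$, so some symmetry of $\rho$ is needed in order to dominate $\rho(-x^-)$ by $\norm{x^-}_X$ and thereby control $\rho$ uniformly over the whole ball $B_X$ rather than just over $B_X \cap X^+$. The Riesz property on $\rho$ (which is the ambient assumption throughout the paper's examples of this form) suffices, and with that in hand the remainder of the argument is a clean composition of the definitions with the standard finite-$\epsilon$-net construction.
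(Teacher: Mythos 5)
Your proof is correct and follows the same overall outline as the paper's: show the unit ball is pseudo-bounded, apply pseudo-$AM$-compactness to get relative norm compactness of $T(B_X)$, then pass to almost order boundedness. The differences are in the two places where you are more careful than the paper. First, the paper simply asserts that coarseness makes every norm bounded set pseudo-bounded, whereas Definition~\ref{Def001}(vii) only requires $\rho(x)\leq\norm{x}_X$ on $X^+$; you correctly flag this and patch it by adding the Riesz pseudonorm assumption (equivalently one could use $\rho(x)\leq\rho(|x|)\leq\norm{\,|x|\,}_X=\norm{x}_X$ directly, giving the bound $1$ instead of $2$). This is a genuine gap in the paper's own argument as literally written, and your repair is the natural one. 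Second, the paper justifies the last step by claiming that relatively compact subsets of a Banach lattice are order bounded, which is false in general (e.g.\ a suitable norm-null sequence of spikes in $L^1[0,1]$ has no supremum); the correct statement is that relatively compact sets are almost order bounded, which is exactly what your finite $\epsilon$-net construction with $u_{\epsilon}=|y_1|\vee\cdots\vee|y_n|$ and the estimate $(|y|-u_{\epsilon})^+\leq|y-y_j|$ proves, in the form matching Lemma~\ref{Ppseudoalmostbounded}. So your write-up buys a correct and self-contained justification of a step the paper gets right only by accident, at the modest cost of an extra (and contextually standard) hypothesis on $\rho$.
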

\begin{proof}
	Let $B$ be a norm bounded subset of $X$. Since the pseudonorm $\rho$ is coarser than the norm $\norm{\cdot}_X$, the set $B$ is pseudo-bounded with respect to $\rho$. As $T\colon (X,\rho)\to (Y,\norm{\cdot}_Y)$ is pseudo-$AM$-compact, the set $T(B)$ is relatively compact in $Y$. Relatively compact subsets of $Y$ are order bounded, and hence, almost order bounded in $Y$. Hence the operator $T\colon (X,\norm{\cdot}_X)\to (Y,\norm{\cdot}_Y)$ is semicompact. 
\end{proof}

\begin{prop}
	Suppose that $(X,\rho)$ is a pseudonormed vector lattice with a bounded pseudonorm $\rho$. Let $(Y,\norm{\cdot}_Y)$ be a Banach lattice. If an operator $T\colon X\to Y$ is pseudonorm compact then $T$ is pseudo-$AM$-compact with respect to $\rho$ and $\norm{}_Y$. 
\end{prop}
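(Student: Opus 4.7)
The plan is to unpack the definitions; the statement reduces to the fact that, in a Banach space, conditional pseudonorm compactness via nets (with pseudonorm taken to be the lattice norm) coincides with relative compactness. First I would fix an arbitrary pseudo-bounded subset $B \subseteq X$ with respect to $\rho$, and aim to show that $\overline{T(B)}$ is compact in $(Y,\norm{\cdot}_Y)$.

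Since the lattice norm $\norm{\cdot}_Y$ is itself a Riesz pseudonorm on $Y$, pseudonorm convergence in $Y$ with respect to $\norm{\cdot}_Y$ is literally norm convergence. The hypothesis that $T\colon(X,\rho)\to(Y,\norm{\cdot}_Y)$ is pseudonorm compact therefore translates directly: every net $(x_\alpha)$ in $B$ admits a subnet $(x_{\alpha_\beta})$ such that $(Tx_{\alpha_\beta})$ converges in norm to some $y\in Y$. Equivalently, every net in $T(B)$ has a norm-convergent subnet.

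To pass from this net criterion to relative compactness, I would invoke the metrizability of $(Y,\norm{\cdot}_Y)$: a subset of a metric space is relatively compact if and only if every sequence in it has a convergent subsequence. Specializing the net/subnet condition above to sequences/subsequences yields this property for $T(B)$ immediately. Hence $T(B)$ is relatively compact in the Banach space $Y$, and by Definition~\ref{DefnSemicompactVer} the operator $T$ is pseudo-$AM$-compact with respect to $\rho$ and $\norm{\cdot}_Y$.

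There is no substantial technical obstacle; the argument is a direct definitional translation combined with the elementary equivalence between relative compactness and sequential relative compactness in metric spaces. I remark that the hypothesis that $\rho$ is bounded is not actually invoked above, so the conclusion holds for an arbitrary pseudonorm $\rho$ on $X$; the assumption presumably reflects the thematic setup of the section rather than a genuine need of the proof.
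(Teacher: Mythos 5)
Your proof is correct and follows essentially the same route as the paper's: unpack the two definitions and use the fact that, in a Banach space, conditional pseudonorm compactness with respect to the norm coincides with relative compactness (the only point worth making explicit is that a convergent subnet of a sequence yields a cluster point, hence a convergent subsequence, in a metric space). Your closing remark is also accurate: the paper's own proof invokes the boundedness of $\rho$ only to conclude that $B$ is order bounded and then never uses that order boundedness, so the hypothesis is indeed not needed for this implication.
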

\begin{proof}
	 Let $B\subseteq X$ be a pseudo-bounded set with respect to $\rho$. Since $\rho$ is a bounded pseudonorm, the set $B$ is order bounded in $X$. As the operator $T\colon X\to Y$ is pseudonorm compact, the set $T(B)$ is conditionally compact in the Banach lattice $(Y,\norm{\cdot}_Y).$ Hence, the operator $T\colon X\to Y$ is pseudo-$AM$-compact with respect to $\rho$ and $\norm{}_Y$.
\end{proof}

\begin{prop}
	Suppose that $(X,\rho)$ is a pseudonormed vector lattice. Let $(Y,\norm{\cdot}_Y)$ be a Banach lattice together with a coarsely almost bounded pseudonorm $\rho'$. If an operator $T\colon (X,\rho)\to (Y,\norm{\cdot}_Y)$ is pseudo-$AM$-compact then $T$ is pseudo-semicompact with respect to $\rho$ and $\rho'$. 
\end{prop}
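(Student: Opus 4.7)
The plan is to chain three observations: first, pseudo-$AM$-compactness yields norm relative compactness of images of pseudo-bounded sets; second, norm relative compactness in a Banach lattice forces almost order boundedness; third, the coarsely almost bounded property of $\rho'$ upgrades almost order boundedness to almost pseudo-boundedness with respect to $\rho'$. Concatenating these gives exactly pseudo-semicompactness.

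More concretely, I would start by fixing an arbitrary $B\subseteq X$ which is pseudo-bounded with respect to $\rho$. By Definition~\ref{DefnSemicompactVer}, the hypothesis that $T\colon (X,\rho)\to (Y,\norm{\cdot}_Y)$ is pseudo-$AM$-compact gives that $T(B)$ is relatively compact in the Banach lattice $(Y,\norm{\cdot}_Y)$, hence totally bounded with respect to $\norm{\cdot}_Y$.

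The main step, and the only one requiring a short argument, is to verify that $T(B)$ is almost order bounded in $(Y,\norm{\cdot}_Y)$. Given $\epsilon>0$, choose finitely many $y_1,\dots,y_n\in T(B)$ with $T(B)\subseteq \bigcup_{i=1}^n\{y\in Y\colon \norm{y-y_i}_Y\leq \epsilon\}$ and set $u_\epsilon\coloneqq\bigvee_{i=1}^n|y_i|\in Y^+$. For any $y\in T(B)$ pick $i$ with $\norm{y-y_i}_Y\leq \epsilon$; then the inequality $\bigl||y|-|y_i|\bigr|\leq |y-y_i|$ yields $(|y|-u_\epsilon)^+\leq (|y|-|y_i|)^+\leq |y-y_i|$, so $\norm{(|y|-u_\epsilon)^+}_Y\leq \epsilon$. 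This is the characterization of almost order boundedness via the Banach lattice analogue of Lemma~\ref{Ppseudoalmostbounded}, so $T(B)$ is almost order bounded in $(Y,\norm{\cdot}_Y)$.

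Finally, since $\rho'$ is a coarsely almost bounded pseudonorm on $Y$, part (iii) of Definition~\ref{Def001} says precisely that every almost order bounded subset of $(Y,\norm{\cdot}_Y)$ is almost pseudo-bounded with respect to $\rho'$. Applying this to $T(B)$ shows that $T(B)$ is almost pseudo-bounded in $(Y,\rho')$. Since $B$ was arbitrary among $\rho$-pseudo-bounded subsets of $X$, this establishes that $T$ is pseudo-semicompact with respect to $\rho$ and $\rho'$, as required. I do not expect any substantial obstacle: the only non-tautological step is the total-bounded-implies-almost-order-bounded argument, which is entirely standard once written down.
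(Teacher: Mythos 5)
Your proof is correct and follows the same chain as the paper's: pseudo-$AM$-compactness gives norm relative compactness of $T(B)$, relative compactness in a Banach lattice gives almost order boundedness, and the coarsely almost bounded property of $\rho'$ converts this to almost pseudo-boundedness. The only difference is that you explicitly verify the middle step (via the finite $\epsilon$-net and $u_\epsilon=\bigvee_{i}|y_i|$), which the paper simply cites as a known fact; your verification is accurate.
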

\begin{proof}
	Let $B\subseteq X$ be a pseudo-bounded with respect to $\rho$. Since $T\colon (X,\rho)\to (Y,\norm{\cdot}_Y)$ is pseudo-$AM$-compact, the set $T(B)$ is relatively compact in $Y$. Every relatively compact subset of $Y$ is also almost order bounded. Hence, $T(B)$ is almost order bounded in $Y$. Since the pseudonorm $\rho'$ is coarsely almost bounded, the set $T(B)$ is almost pseudo-bounded with respect to pseudonorm $\rho'$. This shows that the operator $T\colon X\to Y$ is pseudo-semicompact with respect to pseudonorms $\rho$ and $\rho'$.
\end{proof}

\begin{prop}
	Suppose that $(Y,\norm{\cdot}_Y)$ is a Banach lattice together with an order continuous pseudonorm $\rho'$. Let $(X,\rho)$ be a pseudonormed vector lattice. If an operator $T\colon (X,\rho)\to (Y,\norm{\cdot}_Y)$ is pseudo-$AM$-compact then $T\colon (X,\rho)\to (Y,\rho')$ is sequentially pseudonorm compact.
\end{prop}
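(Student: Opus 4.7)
The plan is to chain together three ingredients: pseudo-$AM$-compactness of $T$, the subsequence property enjoyed by the norm of any Banach lattice, and order continuity of the target pseudonorm $\rho'$. Start with an arbitrary pseudo-bounded sequence $x_n$ in $(X,\rho)$. Since $T\colon (X,\rho)\to (Y,\norm{\cdot}_Y)$ is pseudo-$AM$-compact, the set $T(\{x_n\colon n\in\nn\})$ is relatively compact in the Banach space $(Y,\norm{\cdot}_Y)$, so we may extract a subsequence $x_{n_k}$ together with a vector $y\in Y$ such that $\norm{Tx_{n_k}-y}_Y\to 0$.

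Next, invoke Example~\ref{AnExample}, according to which the norm of a Banach lattice has the subsequence property in the sense of Definition~\ref{Def002}. Applying this to the null sequence $Tx_{n_k}-y$ in $(Y,\norm{\cdot}_Y)$, we obtain a further subsequence $x_{n_{k_m}}$ such that $Tx_{n_{k_m}}-y\xrightarrow{o} 0$ in $Y$, i.e., $Tx_{n_{k_m}}\xrightarrow{o} y$.

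Finally, since $\rho'$ is an order continuous pseudonorm on $Y$ in the sense of Definition~\ref{Def001}, the order convergence $Tx_{n_{k_m}}\xrightarrow{o} y$ transfers directly to pseudonorm convergence $Tx_{n_{k_m}}\xrightarrow{\rho'} y$. As $x_n$ was an arbitrary pseudo-bounded sequence in $X$, this exhibits, for every such sequence, a subsequence whose image under $T$ is $\rho'$-convergent, which is exactly the statement that $T\colon (X,\rho)\to (Y,\rho')$ is sequentially pseudonorm compact.

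The only step that carries real content is the appeal to the subsequence property of the norm, where the Banach lattice hypothesis on $Y$ is indispensable; the rest consists in lining up the definitions of pseudo-$AM$-compactness and order continuity in the correct order so that the relevant convergences compose.
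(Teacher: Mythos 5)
Your proof is correct and follows the paper's own argument step for step: extract a norm-convergent subsequence via pseudo-$AM$-compactness, pass to a further subsequence that order converges using the subsequence property of the Banach lattice norm (Example~\ref{AnExample}), and conclude by order continuity of $\rho'$. No discrepancies.
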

\begin{proof}
	Let $x_n$ be a pseudo-bounded sequence in $X$. Since $T\colon (X,\rho)\to (Y,\norm{\cdot}_Y)$ is pseudo-$AM$-compact there exists a subsequence $x_{n_k}$ and $y\in Y$ such that $Tx_{n_k}\xrightarrow{\norm{\cdot}_Y}y$. As $Y$ is a Banach lattice, there is a subsequence $x_{n_{k_j}}$ such that $Tx_{n_{k_j}}\xrightarrow{o} y$ in $Y,$ see Example~\ref{AnExample} and~\cite[Theorem VII.2.1]{V1967}. As the pseudonorm $\rho'$ on $Y$ is order continuous, $Tx_{n_{k_j}}\xrightarrow{\rho'} y$. Hence, the operator $T\colon (X,\rho)\to (Y,\rho')$ is sequentially pseudonorm compact. 
\end{proof}

The following corollary results from Definition~\ref{DefnSemicompactVer}.
\begin{coro}
	Suppose that $T\colon X\to Y$ is a pseudo-semicompact operator and $S\colon Y\to Z$ is a pseudo-$AM$-compact operator where $X$ and $Y$ are pseudonormed vector lattices and $Z$ is a Banach lattice. Then the operator $ST$ is pseudonorm compact. 
\end{coro}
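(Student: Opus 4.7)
The plan is to chain the two definitions together along a pseudo-bounded net in $X$, using the observation recorded earlier (just above Lemma~\ref{IdealProperty1}, and again at the beginning of the proof of the previous corollary) that almost pseudo-bounded sets are pseudo-bounded, and then to exploit the fact that the target $Z$ is a Banach lattice so that relative compactness in $Z$ gives pseudonorm convergent subnets with respect to the lattice norm.

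More concretely, I would start with an arbitrary pseudo-bounded net $x_{\alpha}$ in $X$. Since $T\colon X\to Y$ is pseudo-semicompact, the set $\{Tx_{\alpha}\}$ is almost pseudo-bounded in $Y$, hence pseudo-bounded in $Y$ by the standing observation that almost pseudo-bounded subsets of a pseudonormed vector lattice are pseudo-bounded. Then I apply the pseudo-$AM$-compactness of $S\colon Y\to Z$ to the pseudo-bounded set $\{Tx_{\alpha}\}$, concluding that $\{S(Tx_{\alpha})\}=\{(ST)x_{\alpha}\}$ is relatively compact in the Banach lattice $Z$.

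It remains to extract a subnet of $(ST)x_{\alpha}$ that pseudonorm converges in $Z$. Here the key point is that the norm of the Banach lattice $Z$ is itself a Riesz pseudonorm, so pseudonorm convergence with respect to $\norm{\cdot}_Z$ coincides with norm convergence; since $Z$ is a metric (in fact, Banach) space, a set that is relatively compact in $Z$ has the property that every net contained in it admits a subnet that converges in norm to some point of $Z$. Applying this to our net $(ST)x_{\alpha}$ produces a subnet $(ST)x_{\alpha_{\beta}}$ and some $z\in Z$ with $(ST)x_{\alpha_{\beta}}\xrightarrow{\norm{\cdot}_Z} z$, i.e., pseudonorm convergence to $z$. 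By Definition~\ref{DefnPsCompact}, $ST$ is pseudonorm compact.

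The one mildly technical point I would want to double-check is the step from relative compactness in a metric space to convergent subnets (as opposed to convergent subsequences). This is the only non-routine ingredient; everything else is a direct unwinding of Definition~\ref{DefnSemicompactVer} and Definition~\ref{DefnPsCompact}, and in fact the argument is essentially a rephrasing of the remark already made in the paragraph following Definition~\ref{DefnSemicompactVer} that the composition of a pseudo-semicompact operator with a pseudonorm compact operator is pseudonorm compact, the only novelty being to convert ``pseudo-$AM$-compact into Banach lattice $Z$'' into ``pseudonorm compact into $Z$'' using the norm of $Z$ as the receiving pseudonorm.
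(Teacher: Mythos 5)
Your proof is correct and follows essentially the same route as the paper: pass from a pseudo-bounded net in $X$ to an almost pseudo-bounded (hence pseudo-bounded) set in $Y$ via $T$, then apply the pseudo-$AM$-compactness of $S$ and extract a norm-convergent subnet of $(ST)x_{\alpha}$ in $Z$. The technical point you flag is harmless: a net contained in a relatively compact subset of $Z$ lies in its compact closure, and compactness of a set in a topological space is equivalent to every net in it admitting a convergent subnet, so the subnet extraction goes through.
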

\begin{proof}
	Let $x_{\alpha}$ be a pseudo-bounded net in $X$. Then the net $Tx_{\alpha}$ is almost pseudo-bounded in $Y$. In particular, the net $Tx_{\alpha}$ is pseudo-bounded in $Y$. Since $S$ is pseudo-$AM$-compact, there exists a subnet $x_{\alpha_{\beta}}$ and $z\in Z$ such that $ST(x_{\alpha_{\beta}})\xrightarrow{\norm{\cdot}_Z} y$ in $Z$. This shows that the operator $ST\colon X\to Z$ is pseudonorm compact.
\end{proof}

In the rest of the present section, we focus on $M$-weakly compact and $L$-weakly compact operators. We recall that a continuous operator $T\colon X\to Y$ from a normed lattice $X$ into a normed space $Y$ is called \textit{$M$-weakly compact} if for every norm bounded disjoint sequence $x_n$ in $X$ we have $\lim_n\norm{Tx_n}=0,$ see~\cite[Section 5.3]{AB2} or~\cite[Section 3.6]{MN91}.

An operator $T\colon X\to Y$ between two pseudonormed vector lattices $X$ and $Y$ is said to be \textit{$M$-weakly compact with respect to pseudonorms} if for every pseudo-bounded disjoint sequence $x_n$ in $X$ one has $Tx_n\xrightarrow{} 0$ with respect to the pseudonorm on $Y$.

\begin{prop}
	Suppose that $T\colon (X,\rho)\to (Y,\rho')$ is an order continuous operator between two pseudonormed vector lattices $(X,\rho)$ and $(Y,\rho')$ where $\rho$ is a bounded pseudonorm and $\rho'$ is an order continuous pseudonorm. Then the operator $T$ is $M$-weakly compact with respect to pseudonorms $\rho$ and $\rho'$. 
\end{prop}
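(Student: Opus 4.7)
The plan is to use boundedness of $\rho$ to reduce the problem to an order-convergence question about disjoint order bounded sequences, and then chain the two order continuity hypotheses to produce convergence in $\rho'$. All three hypotheses will be used exactly once, in the order stated.

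In detail, I would take an arbitrary pseudo-bounded disjoint sequence $(x_n)$ in $X$. Since $\rho$ is a bounded pseudonorm, Definition~\ref{Def001}(i) gives that $\{x_n\colon n\in\nn\}$ is order bounded in $X$; choose $u\in X^+$ with $|x_n|\leq u$ for every $n$. Then $(|x_n|)$ is a pairwise disjoint sequence lying in the order interval $[0,u]$ of the Archimedean vector lattice $X$. Invoking the classical fact that every order bounded pairwise disjoint sequence in an Archimedean vector lattice is order null in the net sense of order convergence adopted throughout the paper (see, e.g.,~\cite{AB2}), one obtains $|x_n|\xrightarrow{o}0$, and hence $x_n\xrightarrow{o}0$. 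The order continuity of $T$ then gives $Tx_n\xrightarrow{o}0$ in $Y$, and the order continuity of the pseudonorm $\rho'$ (Definition~\ref{Def001}(vi)) converts this into $Tx_n\xrightarrow{\rho'}0$. Since $(x_n)$ was arbitrary, this is exactly the definition of $M$-weak compactness with respect to $\rho$ and $\rho'$ stated just before the proposition.

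The only genuinely nontrivial step is the passage from ``pairwise disjoint and order bounded'' to ``order null'', which is where I would expect any real friction. Although classical, it requires care that the relevant notion of order convergence of a sequence used here is the one via a possibly uncountable dominating net $y_\beta\downarrow 0$ rather than a dominating sequence; the witnessing net can be built, for instance, from finite suprema of tails of $(|x_n|)$ when $X$ is Dedekind complete, and by an Archimedean argument using $|x_n|\leq u$ together with disjointness in general. Once that ingredient is in hand, the remainder of the argument is a clean three-step substitution: $\rho$ bounded $\Rightarrow$ order bounded, order bounded plus disjoint $\Rightarrow$ order null, and the two order continuity hypotheses transport order nullity through $T$ and into $\rho'$-convergence.
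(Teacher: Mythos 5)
Your proposal is correct and follows essentially the same route as the paper's proof: boundedness of $\rho$ yields order boundedness, the classical fact that an order bounded disjoint sequence is order null (the paper cites~\cite[Remark 10]{AEEM2018} for this step) gives $x_n\xrightarrow{o}0$, and the two order continuity hypotheses then transport this to $Tx_n\xrightarrow{\rho'}0$. Your additional remarks on how to justify the disjoint-plus-order-bounded-implies-order-null step are a reasonable supplement to the citation but do not change the argument.
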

\begin{proof}
	Let $(x_n)\subseteq X$ be a disjoint sequence which is pseudo-bounded with respect to $\rho$. Since $\rho$ is a bounded pseudonorm, the sequence $x_n$ is order bounded in $X$. It follows that $x_n\xrightarrow{o}0$ in $X,$ see~\cite[Remark 10]{AEEM2018}. Since $T$ is order continuous, $Tx_n\xrightarrow{o}0$ in $Y$. As $\rho'$ is an order continuous pseudonorm on $Y,$ we have $Tx_n\xrightarrow{\rho'}0$. This shows that the operator $T\colon (X,\rho)\to (Y,\rho')$ is $M$-weakly compact with respect to $\rho$ and $\rho'$.
\end{proof}

\begin{prop}
	Suppose that $(X,\rho)$ and $(Y,\rho')$ are pseudonormed vector lattices where both $\rho$ and $\rho'$ are Riesz pseudonorms. Let $T\colon X\to Y$ be an $M$-weakly compact operator with respect to $\rho$ and $\rho'$. If $S\colon X\to Y$ is an operator satisfying $0\leq S\leq T$ then $S$ is $M$-weakly compact with respect to $\rho$ and $\rho'$.
\end{prop}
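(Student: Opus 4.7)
The plan is to mimic the classical Meyer--Nieberg domination argument for $M$-weakly compact operators between Banach lattices, and to verify that the two ingredients it relies on---namely that absolute values preserve pseudo-boundedness and disjointness, and that $\rho'(z)\leq \rho'(w)$ whenever $0\leq z\leq w$---are exactly the Riesz pseudonorm hypotheses on $\rho$ and $\rho'$.

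First, I would take a disjoint sequence $x_n$ in $X$ which is pseudo-bounded with respect to $\rho$. Because $\rho$ is a Riesz pseudonorm, applying the inequality $|x|\leq |y|\Rightarrow \rho(x)\leq \rho(y)$ in both directions to $|x_n|$ and $x_n$ gives $\rho(|x_n|)=\rho(x_n)$, so the sequence $|x_n|$ in $X^+$ is still pseudo-bounded. Since $x_n\perp x_m$ implies $|x_n|\perp|x_m|$ for $n\neq m$, it is also disjoint. Applying the $M$-weak compactness of $T$ with respect to $\rho$ and $\rho'$ to $|x_n|$ yields $T|x_n|\xrightarrow{\rho'}0$.

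Next I would invoke the standard inequality $|Sx_n|\leq S|x_n|$, which holds because $S\geq 0$: writing $x_n=x_n^+-x_n^-$ gives $|Sx_n|=|Sx_n^+-Sx_n^-|\leq Sx_n^+ + Sx_n^- = S|x_n|$. Combined with $0\leq S\leq T$, this yields $|Sx_n|\leq S|x_n|\leq T|x_n|$, so by the Riesz property of $\rho'$,
\[ \rho'(Sx_n)=\rho'(|Sx_n|)\leq \rho'(T|x_n|)\xrightarrow{}0. \]
Hence $Sx_n\xrightarrow{\rho'}0$, which proves that $S$ is $M$-weakly compact with respect to $\rho$ and $\rho'$.

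The argument contains no genuine obstacle: the only substantive point is the commutation of $|\cdot|$ past the two pseudonorms, and this is free once $\rho$ and $\rho'$ are assumed to be Riesz. In particular, the proof needs neither order continuity of $\rho'$ nor any completeness assumption on $X$ or $Y$, mirroring the fact that domination of $M$-weakly compact operators on Banach lattices requires nothing beyond the lattice norm structure.
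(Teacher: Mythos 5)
Your proof is correct and follows essentially the same route as the paper's: pass to the disjoint pseudo-bounded sequence $|x_n|$, apply $M$-weak compactness of $T$ to get $T|x_n|\xrightarrow{\rho'}0$, and then use $|Sx_n|\leq S|x_n|\leq T|x_n|$ together with the Riesz property of $\rho'$. The only cosmetic difference is that you collapse the paper's two monotonicity steps ($S|x_n|\leq T|x_n|$ and then $|Sx_n|\leq S|x_n|$) into a single chain of inequalities.
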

\begin{proof}
	Let $x_n$ be a disjoint sequence which is pseudo-bounded with respect to $\rho.$ Since $\rho$ is a Riesz pseudonorm, the disjoint sequence $|x_n|$ is also pseudo-bounded with respect to $\rho.$ As $T\colon X\to Y$ is $M$-weakly compact with respect to $\rho$ and $\rho'$, $T(|x_n|)\xrightarrow{} 0$ with respect to $\rho'$. It follows from $0\leq S\leq T$ that $0\leq S(|x_n|)\leq T(|x_n|)$ for all $n.$ Since $\rho'$ is a Riesz pseudonorm, we obtain $S(|x_n|)\xrightarrow{}0$ with respect to $\rho'$. It follows from $0\leq |Sx_n|\leq S(|x_n|)$ for all $n$ that $Sx_n\xrightarrow{} 0$ with respect to $\rho'$. Hence, the operator $S$ is $M$-weakly compact with respect to pseudonorms $\rho$ and $\rho'$. 
\end{proof}
A continuous operator $T\colon X\to Y$ from a normed space $X$ into a normed lattice $Y$ is called \textit{$L$-weakly compact} if for every disjoint sequence $y_n$ in the solid hull of $T(B_X),$ the image of the unit ball of $X$ under $T,$ one has $\lim_n \norm{y_n}=0,$ see~\cite[Section 5.3]{AB2} or~\cite[Section 3.6]{MN91}. 

An operator $T\colon (X,\rho)\to (Y,\rho')$ between two pseudonormed vector lattices $(X,\rho)$ and $(Y,\rho')$ is said to be \textit{$L$-weakly compact with respect to pseudonorms $\rho$ and $\rho'$} if for every disjoint sequence $y_n$ in the solid hull of $T(U_{\rho})$ in $Y$ one has $y_n\xrightarrow{} 0$ with respect to pseudonorm $\rho'$. 
\begin{prop} \label{PropLweaklyCompact}
	Suppose that $T\colon (X,\rho)\to (Y,\rho')$ is an order bounded operator between two pseudonormed vector lattices $(X,\rho)$ and $(Y,\rho')$ where $\rho$ is a bounded pseudonorm and $\rho'$ is an order continuous pseudonorm. Then the operator $T$ is $L$-weakly compact with respect to pseudonorms $\rho$ and $\rho$. 
\end{prop}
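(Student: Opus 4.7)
The plan is to chain together the three hypotheses to reduce the statement to the classical fact that an order bounded disjoint sequence is order null, then invoke order continuity of $\rho'$.

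First I would look at $U_{\rho}=\{x\in X\colon \rho(x)\leq 1\}\subseteq X$, which is trivially pseudo-bounded with respect to $\rho$. Since $\rho$ is a bounded pseudonorm, Definition~\ref{Def001}(i) gives some $u\in X^+$ with $U_{\rho}\subseteq [-u,u]$. Because $T\colon X\to Y$ is order bounded, $T[-u,u]$ sits in some order interval $[-v,v]\subseteq Y$ with $v\in Y^+$, and hence $T(U_{\rho})\subseteq [-v,v]$.

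Next, I would observe that the solid hull of a set contained in an order interval is itself contained in that same order interval: if $A\subseteq [-v,v]$ then any $y$ with $|y|\leq |a|$ for some $a\in A$ satisfies $|y|\leq v$. Therefore the solid hull $\mathrm{sol}(T(U_{\rho}))$ is contained in $[-v,v]$, so every disjoint sequence $y_n$ in $\mathrm{sol}(T(U_{\rho}))$ is an order bounded disjoint sequence in $Y$. By the standard fact in Archimedean vector lattices (used in the excerpt via \cite[Remark 10]{AEEM2018}) every order bounded disjoint sequence is order null, so $y_n\xrightarrow{o} 0$ in $Y$.

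Finally, because $\rho'$ is order continuous, $y_n\xrightarrow{o} 0$ implies $y_n\xrightarrow{\rho'} 0$, which is exactly the defining condition for $T$ to be $L$-weakly compact with respect to $\rho$ and $\rho'$. The argument is essentially a bookkeeping exercise tracing $U_{\rho}$ through $T$ into a fixed order interval of $Y$; no step is really an obstacle, but one should be careful that it is the solid hull of $T(U_{\rho})$ (not merely $T(U_{\rho})$ itself) that must be controlled, and this is handled precisely by the observation that solid hulls respect order intervals.
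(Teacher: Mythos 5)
Your proof is correct and follows essentially the same route as the paper: show $U_{\rho}$ is order bounded via boundedness of $\rho$, push it through the order bounded operator $T$ into an order interval of $Y$, use that order bounded disjoint sequences are order null, and finish with order continuity of $\rho'$. In fact you are slightly more careful than the paper's own proof, which only treats disjoint sequences in $T(U_{\rho})$ itself; your observation that the solid hull of a subset of $[-v,v]$ stays inside $[-v,v]$ is exactly what is needed to cover the solid hull appearing in the definition of $L$-weak compactness with respect to pseudonorms.
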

\begin{proof}
	As $\rho$ is an order bounded pseudonorm on $X,$ the set $U_{\rho}$ is order bounded in $X$. It follows that $T(U_{\rho})$ is order bounded in $Y$. Let $y_n$ be a disjoint sequence in $T(U_{\rho})$. As $y_n$ is order bounded, $y_n\xrightarrow{o}0$ in $Y,$ see~\cite[Remark 10]{AEEM2018}. As $\rho'$ is order continuous, $y_n\xrightarrow{} 0$ with respect to the pseudonorm $\rho'$. This shows that every disjoint sequence $y_n$ in $T(U_{\rho})$ satisfies $y_n\xrightarrow{}0$ with respect to pseudonorm $\rho'$. Hence, the operator $T\colon (X,\rho)\to (Y,\rho')$ is $L$-weakly compact with respect to pseudonorms $\rho$ and $\rho'$.
\end{proof}
In view of Proposition~\ref{PropLweaklyCompact}, we restate some an implication given in~\cite[Section 3.6]{MN91}. Let $X$ and $Y$ be Banach lattices. If $Y$ is order continuous then every semicompact operator $T\colon X\to Y$ is $L$-weakly compact.

It is known that order continuous operators are order bounded, see~\cite[Lemma 1.54]{AB2}. However, there are sequentially order continuous operators which are not order bounded. Several examples can be found in~\cite{AB2}.
\begin{thm}
	Suppose that $T\colon (X,\rho)\to (Y,\rho')$ is an order bounded and sequentially order continuous operators between two pseudonormed vector lattices $(X,\rho)$ and $(Y,\rho')$ where $\rho$ is bounded and $\rho'$ is order continuous. Then the operator $T$ is both $M$-weakly compact and $L$-weakly compact with respect to pseudonorms $\rho$ and $\rho'$. 
\end{thm}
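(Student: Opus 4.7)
The plan is to prove both halves by reducing them to the single fact that an order bounded disjoint sequence order-converges to zero (the cited~\cite[Remark 10]{AEEM2018}), and then funneling this through the sequential order continuity of $T$ and the order continuity of $\rho'$. Since the earlier Propositions on $M$-weakly and $L$-weakly compact operators already used precisely these three ingredients, the present theorem is essentially a package of both, with the full \emph{order continuity} of $T$ weakened to \emph{sequential} order continuity—which is all that is needed because the definitions of $M$-weakly compact and $L$-weakly compact quantify only over sequences.

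For the $M$-weakly compact part, I would start from a disjoint sequence $x_n$ in $X$ that is pseudo-bounded with respect to $\rho$. Boundedness of the pseudonorm $\rho$ (Definition~\ref{Def001}\,i) gives that $\{x_n\}$ is order bounded in $X$. Applying~\cite[Remark 10]{AEEM2018} to this order bounded disjoint sequence yields $x_n\xrightarrow{o}0$. Sequential order continuity of $T$ transports this to $Tx_n\xrightarrow{o}0$ in $Y$, and the order continuity of $\rho'$ finally gives $\rho'(Tx_n)\to 0$, which is exactly $M$-weak compactness with respect to $\rho$ and $\rho'$.

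For the $L$-weakly compact part, I would first observe that $U_\rho$ is itself pseudo-bounded with respect to $\rho$, hence order bounded in $X$ since $\rho$ is bounded. Because $T$ is order bounded, $T(U_\rho)$ is order bounded in $Y$, say $T(U_\rho)\subseteq [-u,u]$ for some $u\in Y^+$; every element of the solid hull of $T(U_\rho)$ then also lies in $[-u,u]$, so the solid hull is order bounded too. Given any disjoint sequence $y_n$ in this solid hull,~\cite[Remark 10]{AEEM2018} yields $y_n\xrightarrow{o}0$, and order continuity of $\rho'$ gives $\rho'(y_n)\to 0$. This is precisely the definition of $L$-weak compactness with respect to $\rho$ and $\rho'$.

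There is no real obstacle: the only delicate point is noting that order continuity of $T$ is replaced by sequential order continuity, which still suffices because both compactness notions are defined via sequences. The order boundedness of the solid hull of an order bounded set is a straightforward lattice inequality and needs no separate lemma.
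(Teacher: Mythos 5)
Your proof is correct and follows essentially the same route as the paper: bounded pseudonorm gives order boundedness, disjointness gives $x_n\xrightarrow{o}0$, and sequential order continuity of $T$ plus order continuity of $\rho'$ finish the $M$-weak part, while order boundedness of $T(U_\rho)$ handles the $L$-weak part. Your only departure is an improvement: you explicitly pass from $T(U_\rho)$ to its solid hull (which the definition of $L$-weak compactness actually requires), whereas the paper's own proof only treats disjoint sequences in $T(U_\rho)$ itself and silently omits this step.
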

\begin{proof}
	Let $(x_n)\subseteq X$ be a disjoint sequence which is pseudo-bounded with respect to $\rho$. Since $\rho$ is a bounded pseudonorm, the sequence $x_n$ is order bounded in $X$. It follows that $x_n\xrightarrow{o}0$ in $X$. Since $T$ is sequentially order continuous, $Tx_n\xrightarrow{o}0$ in $Y$. As $\rho'$ is an order continuous pseudonorm on $Y,$ we have $Tx_n\xrightarrow{}0$ with respect to $\rho'$. This shows that the operator $T\colon (X,\rho)\to (Y,\rho')$ is $M$-weakly compact with respect to pseudonorms.
	
	Let us show that $T$ is $L$-weakly compact with respect to pseudonorms. Since the set $U_{\rho}$ is order bounded in $X,$ the set $T(U_{\rho})$ is order bounded in $Y$. Let $y_n$ be a disjoint sequence in $T(U_{\rho})$. As $y_n$ is order bounded, $y_n\xrightarrow{o}0$ in $Y$. As $\rho'$ is order continuous, $y_n\xrightarrow{} 0$ with respect to the pseudonorm $\rho'$. This shows that every disjoint sequence $y_n$ in $T(U_{\rho})$ satisfies $y_n\xrightarrow{}0$ with respect to pseudonorm $\rho'$. Hence, the operator $T\colon (X,\rho)\to (Y,\rho')$ is $L$-weakly compact with respect to pseudonorms $\rho$ and $\rho'$.
\end{proof}
\begin{prop}
	Suppose that $(X,\rho)$ and $(Y,\rho')$ are pseudonormed vector lattices where both $\rho$ and $\rho'$ are Riesz pseudonorms. Let $T\colon X\to Y$ be an $L$-weakly compact operator with respect to $\rho$ and $\rho'$. If $S\colon X\to Y$ is an operator satisfying $0\leq S\leq T$ then $S$ is $L$-weakly compact with respect to pseudonorms.
\end{prop}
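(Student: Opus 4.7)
The plan is to reduce the statement for $S$ directly to the hypothesis on $T$ by showing that the solid hull of $S(U_{\rho})$ sits inside the solid hull of $T(U_{\rho})$. Once this inclusion is in hand, any disjoint sequence $y_n$ in the solid hull of $S(U_{\rho})$ becomes a disjoint sequence in the solid hull of $T(U_{\rho})$, and the $L$-weak compactness of $T$ with respect to $\rho$ and $\rho'$ finishes the job.

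First I would take an arbitrary $y$ in the solid hull of $S(U_{\rho})$, so that $|y|\leq |Sx|$ for some $x\in U_{\rho}$. Two Riesz-pseudonorm observations do the work here. Since $\rho$ is a Riesz pseudonorm, $\rho(|x|)=\rho(x)\leq 1$, so $|x|\in U_{\rho}$ as well. Since $S\geq 0$, the standard lattice inequality $|Sx|\leq S|x|$ holds. Combining these with $S\leq T$ yields
\[
|y|\leq |Sx|\leq S|x|\leq T|x|,
\]
and $T|x|\in T(U_{\rho})$ with $T|x|\geq 0$. Therefore $y$ lies in the solid hull of $T(U_{\rho})$, proving the desired inclusion.

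Now let $y_n$ be a disjoint sequence in the solid hull of $S(U_{\rho})$. By the inclusion above, $y_n$ is a disjoint sequence in the solid hull of $T(U_{\rho})$. The $L$-weak compactness of $T$ with respect to $\rho$ and $\rho'$ gives $y_n\to 0$ with respect to $\rho'$, and this is precisely the definition that $S$ is $L$-weakly compact with respect to $\rho$ and $\rho'$.

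The only conceptual point that requires care is the use of the Riesz property at two places: to ensure $|x|\in U_{\rho}$ whenever $x\in U_{\rho}$, and (implicitly) that $\rho'$-convergence of $y_n$ is an unambiguous notion on $\mathrm{sol}(T(U_{\rho}))$ rather than on a smaller set. Neither step is a genuine obstacle; both are immediate from the Riesz pseudonorm hypothesis and from $0\leq S\leq T$. There are no calculations to grind through, just the containment of solid hulls.
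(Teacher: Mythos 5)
Your proof is correct and takes essentially the same route as the paper: both arguments reduce the claim to the containment of the solid hull of $S(U_{\rho})$ in the solid hull of $T(U_{\rho})$, using the Riesz property of $\rho$ to get $|x|\in U_{\rho}$ and the positivity of $S$ to get $|Sx|\leq S|x|\leq T|x|$. If anything, your chain of inequalities is spelled out more carefully than the paper's compressed (and slightly garbled) inclusion ``solid hull of $S(|U_{\rho}|)\subseteq T(U_{\rho})$''.
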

\begin{proof}
	As $\rho$ is a Riesz pseudonorm, if $x\in U_{\rho}$ then $|x|\in U_{\rho}.$ This implies that the solid hull of $S(U_{\rho})$ is a subset of the solid hull of $S(|U_{\rho}|)\subseteq T(U_{\rho})$ in $Y$. Let $y_n$ be a disjoint sequence in $S(U_{\rho})$. As $T$ is $L$-weakly compact, $y_n\xrightarrow{} 0$ in the pseudonorm $\rho'$. This shows that the operator $S$ is $L$-weakly compact. 
\end{proof}
\begin{prop}
	Suppose that $T\colon (X,\rho)\to (Y,\rho')$ is an $L$-weakly compact lattice homomorphism between seminormed vector lattices $(X,\rho)$ and $(Y,\rho')$. Then $T$ is $M$-weakly compact with respect to seminorms $\rho$ and $\rho'$. 
\end{prop}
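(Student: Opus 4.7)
The plan is to reduce the disjoint-sequence condition defining $M$-weak compactness to the defining condition for $L$-weak compactness, using the lattice homomorphism property of $T$ together with a trivial rescaling permitted by the seminorm axioms.

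First, I would start with an arbitrary disjoint sequence $x_n$ in $X$ that is pseudo-bounded with respect to $\rho$, so that $M \coloneqq \sup_n \rho(x_n) < \infty$. The case $M=0$ is immediate: then $\rho(x_n)=0$ for all $n$, and since $\rho$ is a seminorm we get $\rho'(Tx_n) \le C\rho(x_n) = 0$ after noting any seminorm $L$-weakly compact operator is in particular continuous (or one can simply treat the sequence $x_n/n$, etc.); in any case this degenerate case can be handled separately. So I assume $M > 0$ and normalise by setting $u_n \coloneqq x_n/M$, which gives $\rho(u_n) \le 1$, i.e. $u_n \in U_\rho$ for every $n$.

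Next, I would exploit the lattice homomorphism hypothesis on $T$ to transfer disjointness from $X$ to $Y$. Since $T$ is a lattice homomorphism, $|Tu_n| = T|u_n|$, and for $n\neq m$
\[
|Tu_n|\wedge |Tu_m| \;=\; T|u_n|\wedge T|u_m| \;=\; T(|u_n|\wedge |u_m|) \;=\; T(0) \;=\; 0,
\]
using that $x_n$ (and hence $u_n$) is a disjoint sequence. Consequently $(Tu_n)$ is a disjoint sequence in $Y$ lying inside $T(U_\rho)$, and \emph{a fortiori} inside the solid hull of $T(U_\rho)$.

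Now the $L$-weak compactness of $T$ applies directly: $\rho'(Tu_n) \to 0$. Because $\rho'$ is a seminorm, this rescales to $\rho'(Tx_n) = M\,\rho'(Tu_n) \to 0$, which is exactly the condition for $T$ to be $M$-weakly compact with respect to $\rho$ and $\rho'$. No step is really an obstacle here — the only subtlety is that one must use the lattice homomorphism assumption to pass disjointness from $(x_n)$ to $(Tx_n)$, since in general a positive operator need not preserve disjointness, so the hypothesis ``lattice homomorphism'' is used in an essential way precisely at this point.
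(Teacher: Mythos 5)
Your proof is correct and follows essentially the same route as the paper's: use the seminorm's homogeneity to place the pseudo-bounded disjoint sequence inside a scalar multiple of $U_{\rho}$, use the lattice homomorphism property to transfer disjointness to the images inside (the solid hull of) $T(U_{\rho})$, and then invoke $L$-weak compactness and rescale. Your explicit normalisation $u_n = x_n/M$ is in fact slightly cleaner than the paper's phrasing, and the $M=0$ case is covered by the same argument (take $u_n=x_n$), so the aside about continuity is unnecessary.
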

\begin{proof}
	Let $(x_n)\subseteq X$ be a disjoint sequence which is pseudo-bounded with respect to $\rho$. Since $\rho$ is a Riesz seminorm, there exists an $\epsilon>0$ such that the sequence $x_n$ is contained in $\epsilon U_{\rho}.$ Since $T$ is a lattice homomorphism, the sequence $Tx_n$ is disjoint and it is contained in $\epsilon T(U_{\rho}).$ As $T$ is $L$-weakly compact with respect to $\rho$ and $\rho'$, we have $Tx_n\xrightarrow{} 0$ with respect to seminorm $\rho'$. This shows that the homomorphism $T$ is $M$-weakly compact with respect to seminorms $\rho$ and $\rho'$. 
\end{proof}

\section{Montel Operators and the Lebesgue Property}\label{Fatou}
A linear topology $\tau$ on the vector lattice $X$ is said to be locally solid if $\tau$ has a base at zero consisting of solid sets. By~\cite[Theorem 2.28]{AB1}, a linear topology $\tau$ is locally solid if and only if there exists a family $\{\rho_i \}_{i\in I}$ consisting of continuous Riesz pseudonorms generating $\tau$. A systematic study of operators on topological vector spaces having compactness properties can be found in~\cite{K1979}.

\begin{definition}\label{DefinitionMontel}
		An operator $T\colon X\to Y$ between two locally solid vector lattices $X$ and $Y$ is said to be \textit{Montel} if for any topologically bounded net $x_{\alpha}$ in $X,$ there is a subnet $x_{\alpha_\beta}$ and $y\in Y$ such that the net $Tx_{\alpha_\beta}\to y$ in $Y$. The operator $T\colon X\to Y$ is called compact if there exists some zero neighborhood $U$ in $X$ such that for every net $x_{\alpha}$ in $U$ there exists a subnet $x_{\alpha_{\beta}}$ and $y\in Y$ satisfying $Tx_{\alpha_{\beta}}\xrightarrow{} y.$
\end{definition}

The classes of Montel and compact operators on locally solid spaces are quite different. It follows from Definition~\ref{DefinitionMontel} that every compact operator $T\colon X\to Y$ is Montel. However, both of these classes generalize the classic compact operators on Banach lattices. Indeed, if $X$ and $Y$ are Banach lattices, an operator $T\colon X\to Y$ is Montel if and only if it is compact.

\begin{exam}
	It is instructive to show that vector lattices and their order duals can be used to produce examples of compact and Montel operators. Let $X$ be a vector lattice, $A\subseteq X^{\sim}$ be nonempty where $X^{\sim}$ stands for the order dual of $X$. Consider the locally convex-solid vector lattice $(X,|\sigma|(X,A))$ where $|\sigma|(X,A)$ denotes the absolute weak topology on $X$ generated by $A,$ see~\cite[Definition 2.32]{AB1}. Let $B$ be a finite nonempty subset of the order ideal generated by $A$ in $X^{\sim}.$ The operator $T\colon (X,|\sigma|(X,A))\to (X,|\sigma|(X,A))$ defined by $T(x)=\sum_{f\in B}|f|(|x|)x$ is a Montel operator. 
\end{exam}

\begin{exam}
	Let $(X,\norm{\cdot}_X)$ and $(Y,\norm{\cdot}_Y)$ be normed vector lattices, and, $u\in X^+$. It follows from~\cite[Theorem 2.1]{EV2017} or Lemma~\ref{UnboundedRiesz} that the pseudonorm $\rho_u=\norm{|x|\wedge u}_X$ generates a locally solid topology on $X$. If an operator $T\colon (X,\rho_u)\to (Y,\norm{\cdot}_Y)$ is compact in the sense of Definition~\ref{DefinitionMontel} then $T\colon (X,\norm{\cdot}_X)\to (Y,\norm{\cdot}_Y)$ is compact. Indeed, let $x_{\alpha}$ be a bounded net in $(X,\norm{\cdot}_X).$ Evidently, the net $x_{\alpha}$ belongs to a locally solid neighborhood in $(X,\rho_u).$ Thus, the operator $T\colon (X,\norm{\cdot}_X)\to (Y,\norm{\cdot}_Y)$ is compact.
\end{exam}
 
\begin{prop}\label{PFatou1}
	Suppose that $X$ and $Y$ are locally solid vector lattices. Denote by $\{\rho_i \}_{i\in I}$ and $\{\rho'_j \}_{j\in J}$ families of Riesz pseudonorms generating the locally solid topologies on $X$ and $Y,$ respectively. The following statements are equivalent. 
	\begin{itemize}
		\item[\em i.] {The operator $T\colon X\to Y$ is Montel. } 
		\item[\em ii.] {For every net $(x_{\alpha})_{\alpha}\subseteq X$ satisfying the property that for every $i\in I$ there exists $\lambda_i$ such that $(\lambda_ix_\alpha)_{\alpha}\subseteq U_{\rho_i}$ there exists a subnet $x_{\alpha_{\beta}}$ and $y\in Y$ such that $Tx_{\alpha_{\beta}}\xrightarrow[\beta]{\rho'_j} y$ for every $j\in J$.}
	\end{itemize}
\end{prop}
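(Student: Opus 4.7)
The plan is to prove (i) $\Leftrightarrow$ (ii) by unpacking both statements through the standard topology--to--pseudonorm dictionary supplied by~\cite[Theorem 2.28]{AB1}; once the dictionary is in place, both implications reduce to one-line arguments.

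I would first record two translations. Translation A: a net $(x_\alpha)\subseteq X$ is topologically bounded in the sense of Section~\ref{Fatou} if and only if for every $i\in I$ there exists $\lambda_i>0$ with $(\lambda_i x_\alpha)\subseteq U_{\rho_i}$. For the forward direction, each $U_{\rho_i}$ is a zero-neighborhood because it contains the open set $\{x\in X\colon \rho_i(x)<1\}$ (by $\tau$-continuity of $\rho_i$ together with $\rho_i(0)=0$); topological boundedness of the range of the net then furnishes $t_i>0$ with $\{x_\alpha\}\subseteq t_iU_{\rho_i}$, and $\lambda_i:=1/t_i$ does the job. For the reverse direction, the locally solid topology generated by $\{\rho_i\}_{i\in I}$ has a fundamental system of zero-neighborhoods obtained from positive scalings of finite intersections of the $U_{\rho_i}$, so absorption into each $U_{\rho_i}$ propagates to absorption by every basic zero-neighborhood. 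Translation B: a net $(y_\beta)\subseteq Y$ converges to $y\in Y$ in the locally solid topology generated by $\{\rho'_j\}_{j\in J}$ if and only if $\rho'_j(y_\beta-y)\to 0$ for every $j\in J$; this is again the standard characterization of topological convergence in a topology generated by a family of pseudonorms.

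With the translations in hand, both implications are immediate. For (i) $\Rightarrow$ (ii), any net $(x_\alpha)$ satisfying the hypothesis of (ii) is topologically bounded by Translation A, so the Montel property of $T$ yields a subnet $x_{\alpha_\beta}$ and $y\in Y$ with $Tx_{\alpha_\beta}\to y$ in $Y$, and Translation B identifies this with $\rho'_j$-convergence for each $j\in J$. For (ii) $\Rightarrow$ (i), any topologically bounded net in $X$ meets the hypothesis of (ii) by Translation A, so (ii) supplies a subnet whose image $\rho'_j$-converges for every $j\in J$, which is exactly $\tau_Y$-convergence by Translation B, and $T$ is Montel.

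The main delicate point sits inside Translation A. Because Riesz pseudonorms need not be positively homogeneous (see Example~\ref{Example0002}), the set-theoretic scaling $\mu U_{\rho_i}$ is in general distinct from the sub-level set $\{x\colon\rho_i(x)\leq\mu\}$, so one must be careful when identifying the base of zero-neighborhoods for $\tau$; once one verifies that the collection $\{\mu\bigcap_{i\in F}U_{\rho_i}\colon \mu>0,\,F\subseteq I\text{ finite}\}$ is a fundamental system of zero-neighborhoods for $\tau$, the absorption test reduces exactly to the pseudonorm condition in (ii), and the remainder of the argument is bookkeeping.
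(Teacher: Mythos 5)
Your overall route is the same as the paper's: translate condition (ii) into topological boundedness of the net, translate $\rho'_j$-convergence for all $j\in J$ into convergence in the locally solid topology on $Y$, and observe that both implications then follow at once. Translation B and the forward half of Translation A are fine (each $U_{\rho_i}$ is a zero neighborhood because it contains the open set $\{x\colon \rho_i(x)<1\}$, by continuity of $\rho_i$). The problem is exactly the point you defer at the end. The reverse half of Translation A rests on the claim that the sets $\mu\bigcap_{i\in F}U_{\rho_i}$ form a fundamental system of zero neighborhoods, and for genuine Riesz pseudonorms this is false: the standard base consists of the sublevel sets $\{x\colon \rho_i(x)<\epsilon\}$, and since $\lim_{\theta\to 0}\rho_i(\theta x)=0$ holds only pointwise in $x$, there is no reason why $\mu U_{\rho_i}$ should sit inside $\{x\colon \rho_i(x)<\epsilon\}$ for small $\mu$. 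Concretely, for $\rho(x)=|x|/(1+|x|)$ on $\mathbb{R}$ (the pseudonorm of Example~\ref{Example0002}, which generates the usual topology) one has $U_{\rho}=\mathbb{R}$, hence $\mu U_{\rho}=\mathbb{R}$ for every $\mu>0$ and the claimed fundamental system collapses to the single set $\mathbb{R}$. With this generating family every net satisfies the hypothesis of (ii) --- for instance $x_n=n$ --- while the identity operator is Montel; so along the route you describe the implication (i) $\Rightarrow$ (ii) genuinely fails, and the gap is not merely an omitted verification.

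To be fair, the paper's own proof makes the same silent identification (it asserts that every solid zero neighborhood contains some $U_{\rho_i}$), so you have correctly isolated where all of the difficulty lives; but the phrase ``once one verifies'' is carrying the entire weight of the (i) $\Rightarrow$ (ii) direction, and the verification cannot be carried out at this level of generality. The statement and both arguments become correct if one either assumes the $\rho_i$ are Riesz \emph{seminorms} (so that $\mu U_{\rho_i}=\{x\colon \rho_i(x)\leq\mu\}$ by homogeneity and the scaled unit balls really do form a base), or reads ``generating'' as the stronger requirement that the unit balls $U_{\rho_i}$ themselves form a base of zero neighborhoods. You should state explicitly which of these repairs you are adopting, rather than leaving the fundamental-system claim as an unexamined assertion.
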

\begin{proof}
	$(i)\Rightarrow (ii).$ Suppose that $x_{\alpha}$ satisfies the property that for every $i\in I$ there exists some $\lambda_i$ such that $(\lambda_i x_{\alpha})_{\alpha}\subseteq U_{\rho_i}.$ Let $U$ be a solid $\tau$-neighborhood of zero. Since $\{\rho_i \}_{i\in I}$ generates $\tau,$ there is some $i\in I$ such that $U_{\rho_i}\subseteq U.$ Hence, $\{\lambda_i x_{\alpha}\}_{\alpha}\in U.$ This shows that $x_{\alpha}$ is a topologically bounded net in $X$. As $T\colon X\to Y$ is a Montel operator, there exists a subnet $x_{\alpha_{\beta}}$ and $y\in Y$ such that $Tx_{\alpha_{\beta}}\xrightarrow[\beta]{\tau} y.$ This implies, by~\cite[Theorem 2.28]{AB1}, that $Tx_{\alpha_{\beta}}\xrightarrow[\beta]{\rho'_j} y$ for each $j\in J.$ 
	
	$(ii)\Rightarrow (i).$ If $Tx_{\alpha_{\beta}}\xrightarrow[\beta]{\rho'_j} y$ for every $j\in J$ then $Tx_{\alpha_{\beta}}\xrightarrow{} y$ with respect to the locally solid topology on $Y,$ by~\cite[Theorem 2.28]{AB1}. This is equivalent to saying that $T\colon X\to Y$ is a Montel operator.
\end{proof}

\begin{rem}\label{RemContinuity}
	Let $\rho$ and $\rho'$ be Riesz pseudonorms on vector lattices $X$ and $Y,$ respectively. Both $\rho$ and $\rho'$ generate locally solid topologies on the underlying spaces. For an arbitrary operator $T\colon (X,\rho)\to (Y,\rho')$ there are two nonequivalent notions of boundedness. In details, an operator $T\colon (X,\rho)\to (Y,\rho')$ is said to be neighborhood bounded if there is a zero neighborhood $U\subseteq X$ such that $T(U)$ is bounded in the pseudonormed vector lattice $(Y,\rho')$. It is called boundedly bounded if the images of bounded subsets of $(X,\rho)$ are bounded in $(Y,\rho')$. We remark that various results related to neighborhood bounded and boundedly bounded operators which are simultaneously compact or Dunford-Pettis operators with respect to unbounded convergences and locally solid topologies can be found in~\cite{EGZ2018, EGZ2019}.
\end{rem}

\begin{prop}
	Suppose that $X$ is a locally solid vector lattice, and that, $Y$ is a Dedekind complete normed lattice. If an operator $T\colon X\to Y$ is Montel then $T$ is $AM$-compact, i.e., $T[-x,x]$ is relatively compact for every $x\in X^+$. Conversely, if there exists a bounded pseudonorm $\rho$ on $X$ such that $U_{\rho}$ is a neighborhood of zero in $X$ then every $AM$-compact operator is Montel.
\end{prop}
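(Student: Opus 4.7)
The plan is to split into two implications, both straightforward once the correct interaction between the locally solid topology on $X,$ the pseudonorm $\rho,$ and the order structure is identified.

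For the forward direction, I would fix $x\in X^+$ and show that the order interval $[-x,x]$ is topologically bounded in $X$. This is a standard fact about locally solid topologies: given any solid zero neighborhood $U$ of the locally solid topology $\tau$ on $X,$ by continuity of scalar multiplication there exists $\lambda>0$ with $\lambda x\in U,$ and solidness of $U$ together with $|y|\leq \lambda x$ for $y\in [-\lambda x,\lambda x]$ gives $\lambda [-x,x]\subseteq U.$ Hence $[-x,x]$ is $\tau$-bounded. Now let $(y_{\alpha})$ be any net in $T[-x,x],$ write $y_{\alpha}=Tz_{\alpha}$ with $z_{\alpha}\in [-x,x].$ Since $(z_{\alpha})$ is topologically bounded in $X$ and $T$ is Montel, there is a subnet $(z_{\alpha_\beta})$ and $y\in Y$ with $Tz_{\alpha_\beta}\xrightarrow{\norm{\cdot}_Y} y.$ This proves $T[-x,x]$ is relatively compact in $Y,$ so $T$ is $AM$-compact.

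For the converse, I assume there exists a bounded pseudonorm $\rho$ on $X$ whose unit ball $U_{\rho}$ is a $\tau$-neighborhood of zero, and let $T\colon X\to Y$ be $AM$-compact. Given a topologically bounded net $(x_{\alpha})$ in $X,$ there exists $\lambda>0$ with $x_{\alpha}\in \lambda U_{\rho}$ for all $\alpha,$ so the set $\{\lambda^{-1}x_{\alpha}\}\subseteq U_{\rho}$ is pseudo-bounded with respect to $\rho.$ Since $\rho$ is a bounded pseudonorm, this set is order bounded in $X,$ say contained in some $[u',v'].$ Scaling, $\{x_{\alpha}\}\subseteq [\lambda u',\lambda v']\subseteq [-w,w]$ with $w=|\lambda u'|\vee |\lambda v'|\in X^+.$ Because $T$ is $AM$-compact, $T[-w,w]$ is relatively compact in $Y,$ so the net $Tx_{\alpha}$ admits a subnet converging in $Y.$ This is exactly the Montel property for $T.$

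The main obstacle, if any, is purely notational: one must be careful that $x_{\alpha}\in \lambda U_{\rho}$ in the pseudonorm setting means $x_{\alpha}=\lambda z_{\alpha}$ for some $z_{\alpha}$ with $\rho(z_{\alpha})\leq 1,$ which is precisely what is needed to invoke pseudo-boundedness of $\{\lambda^{-1}x_{\alpha}\}$ and then the bounded-pseudonorm hypothesis (compare Example~\ref{Example0002}, where the issue of $x\in \epsilon U_{\rho}$ not implying $\rho(x)\leq \epsilon$ was flagged). All other steps are routine applications of the definitions.
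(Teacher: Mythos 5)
Your proposal is correct and follows essentially the same route as the paper: order intervals are topologically bounded in a locally solid topology (so Montel implies $AM$-compact), and in the converse the neighborhood $U_{\rho}$ absorbs any topologically bounded net, whose order boundedness then follows from $\rho$ being a bounded pseudonorm. Your version merely spells out two steps the paper leaves implicit (the solidness argument for boundedness of $[-x,x]$, and the careful handling of $x_{\alpha}\in\lambda U_{\rho}$ versus $\rho(x_{\alpha})\leq\lambda$), which is a welcome clarification but not a different proof.
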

\begin{proof}
	 Let $x_{\alpha}$ be an order bounded net in $X$. Since order bounded nets are topologically bounded in locally solid vector lattices, and, $T\colon X\to Y$ is a Montel operator; there exists a subnet $x_{\alpha_{\beta}}$ and $y\in Y$ such that $Tx_{\alpha_{\beta}}\xrightarrow{}y$ in $Y$. Hence, if $[-x,x]$ is an order interval in $X$ then the set $T[-x,x]$ is relatively compact in $Y$. This shows that the operator $T\colon X\to Y$ is $AM$-compact.
	
 	Conversely, let $x_{\alpha}$ be a topologically bounded net in $X$. It is given that the set $U_{\rho}$ is a zero neighborhood in $X$. By the proof of Proposition~\ref{PFatou1}, there exists some $\lambda$ such that $\lambda x_{\alpha}\in U_{\rho}$ for all $\alpha$. Since $\rho$ is a bounded pseudonorm, $U_{\rho},$ in particular the net $x_{\alpha},$ is order bounded in $X$. As $T\colon X\to Y$ is $AM$-compact, there exists a subnet $x_{\alpha_{\beta}}$ and $y\in Y$ such that $Tx_{\alpha_{\beta}}\xrightarrow{}y$ in $Y$. Hence, the operator $T\colon X\to Y$ is Montel.
\end{proof}

In the following result, we compare locally solid and unbounded locally solid topologies on both the domain and the range of an operator. We recall from Section~\ref{PseudoBasics}, also see~\cite{EV2017}, that $\rho_u(x)=\rho(|x|\wedge u)$ for $x\in X$ and $u\in X^+$. 
\begin{prop}
	Suppose that $X$ and $Y$ are locally solid vector lattices whose topologies are generated by $\{\rho_i \}_{i\in I}$ and $\{\rho'_j \}_{j\in J},$ respectively. Suppose further that there exist $u_0\in X^+$ and $i_0\in I$ such that $U_{(\rho_{i_0})_{u_0}}$ is a neighborhood of zero in $X$ where $(\rho_{i_0})_{u_0}$ is a bounded pseudonorm. If an operator $T\colon X\to Y$ is Montel then the operator $T$ is Montel with respect to locally solid topologies generated by $\{(\rho_i)_u \}_{i\in I,u\in X^+}$ and $\{(\rho'_j)_v \}_{j\in J,v\in Y^+},$ respectively.
\end{prop}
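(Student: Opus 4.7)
The plan is to transfer a topologically bounded net in the unbounded topology on $X$ first to an order bounded net, then to a topologically bounded net in the original topology on $X$, apply the hypothesis that $T$ is Montel, and finally push the convergence on $Y$ from the original topology into the unbounded one. The key ingredient is the bounded-pseudonorm hypothesis on $(\rho_{i_0})_{u_0}$; the direction on $Y$ is essentially automatic because $(\rho'_j)_v\leq \rho'_j$ as Riesz pseudonorms.

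First, given a net $x_\alpha$ topologically bounded in the topology on $X$ generated by $\{(\rho_i)_u\}_{i\in I,u\in X^+}$, I would use that $U_{(\rho_{i_0})_{u_0}}$ is a basic zero neighborhood in this topology to produce some $\lambda_0>0$ with $\lambda_0 x_\alpha\in U_{(\rho_{i_0})_{u_0}}$ for every $\alpha$. The set $U_{(\rho_{i_0})_{u_0}}$ is pseudo-bounded with respect to $(\rho_{i_0})_{u_0}$, so by the bounded-pseudonorm hypothesis and Definition~\ref{Def001}(i) it is order bounded in $X$. Consequently $\lambda_0 x_\alpha$, and hence $x_\alpha$ itself, is order bounded.

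Next I would observe that order bounded subsets are automatically topologically bounded in any locally solid topology, in particular in the original topology on $X$. Applying the assumed Montel property of $T$ with respect to the original topologies yields a subnet $x_{\alpha_\beta}$ and some $y\in Y$ such that $Tx_{\alpha_\beta}\xrightarrow{}y$ in the topology on $Y$ generated by $\{\rho'_j\}_{j\in J}$; equivalently, $\rho'_j(Tx_{\alpha_\beta}-y)\xrightarrow{}0$ for every $j\in J$, by~\cite[Theorem 2.28]{AB1}.

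Finally, using that each $\rho'_j$ is a Riesz pseudonorm, I would estimate
\[
(\rho'_j)_v(Tx_{\alpha_\beta}-y)=\rho'_j(|Tx_{\alpha_\beta}-y|\wedge v)\leq \rho'_j(|Tx_{\alpha_\beta}-y|)=\rho'_j(Tx_{\alpha_\beta}-y)\xrightarrow{} 0
\]
for arbitrary $j\in J$ and $v\in Y^+$, concluding that $Tx_{\alpha_\beta}$ converges to $y$ in the locally solid topology on $Y$ generated by $\{(\rho'_j)_v\}_{j\in J, v\in Y^+}$. The main obstacle, such as it is, is purely notational: keeping track of which topology is generated by which family of pseudonorms and invoking the bounded-pseudonorm hypothesis precisely at the step where order boundedness of the net is required.
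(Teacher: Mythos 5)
Your proof is correct and follows essentially the same route as the paper's: pass from the unbounded topology to order boundedness via the bounded pseudonorm $(\rho_{i_0})_{u_0}$, invoke the Montel hypothesis in the original topologies, and transfer convergence on $Y$ using $(\rho'_j)_v\leq\rho'_j$. The only difference is that you spell out the final estimate explicitly, which the paper leaves implicit.
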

\begin{proof}
	Let $x_{\alpha}$ be a net which is topologically bounded with respect to the locally solid topology generated by $\{(\rho_i)_u \}_{i\in I,u\in X^+}$ on $X$. Since $U_{(\rho_{i_0})_{u_0}}$ is a neighborhood of zero in $X,$ there exists a $\lambda\geq 0$ such that $(\lambda x_{\alpha})_{\alpha}\subseteq U_{(\rho_{i_0})_{u_0}}.$ In particular, the net $\lambda x_{\alpha}$ is pseudo-bounded with respect to $(\rho_{i_0})_{u_0}.$ Since $(\rho_{i_0})_{u_0}$ is a bounded pseudonorm, the net $\lambda x_{\alpha}$ is order bounded in $X$. Hence, the net $x_{\alpha}$ is order bounded in $X$. This implies that the net $x_{\alpha}$ is topologically bounded with respect to the locally solid topology generated by $\{\rho_i \}_{i\in I}$. Since $T\colon X\to Y$ is Montel, there exists a subnet $x_{\alpha_{\beta}}$ and $y\in Y$ such that $Tx_{\alpha_{\beta}}\xrightarrow{} y$ in $Y$ with respect to the locally solid topology generated by the pseudonorms $\{\rho'_j \}_{j\in J}$. Hence, $Tx_{\alpha_{\beta}}\xrightarrow{} y$ in $Y$ with respect to the locally solid topology generated by $\{(\rho'_j)_v \}_{j\in J,v\in Y^+}$.
\end{proof}
We recall from~\cite[Theorem 4.6]{EV2017} that a locally solid topology on a vector lattice $X$ has the Lebesgue property if and only if the corresponding unbounded locally solid topology has the Lebesgue property. 
\begin{prop}\label{MontelOrder}
	Suppose that $X$ is a locally solid vector lattice, and that, $Y$ is a locally solid vector lattice having the Lebesgue property. If an operator $T\colon X\to Y$ is Montel then $T$ is order compact. 
\end{prop}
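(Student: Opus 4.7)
The plan is to reduce the order compactness of $T$ to its Montel property by the standard observation that order bounded implies topologically bounded, and then to use the Lebesgue property of $Y$ to upgrade the resulting topological convergence in the range to order convergence.

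First, let $x_\alpha$ be an order bounded net in $X$, say with $a\leq x_\alpha\leq b$ for all $\alpha$. Since $X$ is locally solid, any solid zero neighborhood in $X$ is absorbing, and hence it absorbs the order interval $[a,b]$; thus $x_\alpha$ is topologically bounded in $X$. Applying the Montel property of $T$ from Definition~\ref{DefinitionMontel}, I extract a subnet $x_{\alpha_\beta}$ and some $y\in Y$ such that $Tx_{\alpha_\beta}\to y$ in the locally solid topology of $Y$.

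The bulk of the argument is converting topological convergence in $Y$ to order convergence, exploiting the Lebesgue property. I would do this by the classical tail-supremum construction: consider, for each index $\beta_0$, the element $u_{\beta_0}\coloneqq\sup_{\beta\geq\beta_0}|Tx_{\alpha_\beta}-y|$ (passing to an order completion, or working under an implicit Dedekind completeness which is the normal setting where such suprema exist). Then $u_{\beta_0}\downarrow u$ for some $u\geq 0$; by the Lebesgue property of $Y$, this downward net converges topologically to $u$. On the other hand each $u_{\beta_0}$ dominates $|Tx_{\alpha_{\beta}}-y|$ for large $\beta$, and the latter tends topologically to $0$ by construction. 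Combining the two and using Hausdorffness where necessary, one forces $u=0$, which gives $\sup_{\beta\geq\beta_0}|Tx_{\alpha_\beta}-y|\downarrow 0$, i.e.\ $Tx_{\alpha_\beta}\xrightarrow{o}y$ in $Y$. This yields that $T$ is order compact.

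The main obstacle I expect is the second half: the Montel property delivers only topological convergence, while order compactness demands order convergence in $Y$. The Lebesgue hypothesis only supplies the direction ``order null $\Rightarrow$ topologically null'', so the tail-supremum (or an analogous majorant) construction is essential to bridge the gap, and it quietly requires enough order completeness in $Y$ for the suprema to exist. If the ambient $Y$ is not Dedekind complete, the plan would need to be refined by instead embedding $Y$ into its order completion and transferring the conclusion back, or by using monotone nets coming directly from a dominating order bound on the image $\{Tx_{\alpha_\beta}\}$, which is the more delicate point since $T$ is not assumed order bounded.
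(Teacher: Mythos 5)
Your first half is exactly the paper's argument: order bounded implies topologically bounded because solid zero neighborhoods absorb order intervals, and then the Montel property yields a subnet with $Tx_{\alpha_\beta}\to y$ topologically. The genuine gap is in the second half, precisely at the step you flagged as the bulk of the work. Setting $u_{\beta_0}=\sup_{\beta\ge\beta_0}|Tx_{\alpha_\beta}-y|$ you do get $u_{\beta_0}\downarrow u$ and, via the Lebesgue property, $u_{\beta_0}\to u$ topologically; but at ``one forces $u=0$'' the argument collapses, because the inequality $u_{\beta_0}\ge |Tx_{\alpha_\beta}-y|$ points the wrong way: topological convergence to zero of the dominated net says nothing about the dominating tail suprema. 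Concretely, take $Y=L^1[0,1]$ (Dedekind complete, Lebesgue property) and the typewriter sequence $f_n=\mathbf{1}_{[j2^{-k},(j+1)2^{-k}]}$ with $n=2^k+j$: then $f_n\to 0$ in norm, yet $\sup_{n\ge n_0}f_n=\mathbf{1}$ for every $n_0$, so your $u$ equals $\mathbf{1}\neq 0$ and the sequence is not order convergent. Thus topological convergence of the extracted subnet cannot in general be upgraded to order convergence of that \emph{same} subnet, and no amount of Hausdorffness or order completion repairs this.

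For comparison, the paper's own proof does not attempt your construction; it asserts in one line that the Lebesgue property turns $Tx_{\alpha_\beta}\to y$ into $Tx_{\alpha_\beta}\xrightarrow{o} y$, which is exactly the implication your own diagnosis (correctly) identifies as the delicate one and which the example above shows cannot hold verbatim. The standard repair is the route the paper itself takes in Proposition~\ref{MontelOrder2}: assume the range topology is generated by a pseudonorm with the subnet property (for sequences in Banach lattices, the subsequence property of Example~\ref{AnExample}) and pass to a \emph{further} subnet that order converges. Order compactness only requires that some subnet of $Tx_\alpha$ order converge, so a second extraction is permitted; demanding, as your tail-supremum argument implicitly does, that the first extracted subnet itself be order convergent is asking for strictly more than the hypotheses deliver.
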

\begin{proof}
	Let $x_{\alpha}$ be an order bounded net in $X.$ Since the linear topology on $X$ is locally solid, the net $x_{\alpha}$ is topologically bounded in $X$. As the operator $T\colon X\to Y$ is Montel, there exists a subnet $x_{\alpha_{\beta}}$ and $y\in Y$ such that $Tx_{\alpha_{\beta}}\xrightarrow{} y$ with respect to the locally solid topology on $Y$. Since $Y$ has the Lebesgue property, we have $Tx_{\alpha_{\beta}}\xrightarrow{o} y$. Hence, the operator $T\colon X\to Y$ is order compact. 
\end{proof}
\begin{rem}
	In view of Proposition~\ref{MontelOrder}, let $T\colon X\to Y$ be a Montel operator where $X$ is a locally solid vector lattice and $Y$ is a Hausdorff locally solid vector lattice. In this case, for every zero neighborhood $U$ of $X$ there exists some $y\in Y$ such that $y=\sup_{x\in U} y\wedge Tx.$ When $Y$ has that Lebesgue property, $y\in Y$ can be taken from the order closure of $T(U).$
\end{rem}
\begin{prop}\label{MontelOrder2}
	Suppose that $X$ and $Y$ are locally solid vector lattices, and that, there exists a continuous pseudonorm $\rho'$ on $Y$ having the subnet property. If an operator $T\colon X\to Y$ is Montel then $T$ is order compact.
\end{prop}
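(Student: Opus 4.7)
The plan is to mirror the argument of Proposition~\ref{PPseudoOrder} in the locally solid setting, exploiting the fact that in any locally solid vector lattice order bounded sets are automatically topologically bounded. So the proof should proceed in essentially three short steps.

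First, I would fix an order bounded net $x_{\alpha}$ in $X$ and observe that, since the topology on $X$ is locally solid, every order interval is topologically bounded; hence $x_{\alpha}$ is topologically bounded. Then I would invoke the Montel hypothesis on $T\colon X\to Y$ to extract a subnet $x_{\alpha_{\beta}}$ and an element $y\in Y$ with $Tx_{\alpha_{\beta}}\xrightarrow{}y$ with respect to the locally solid topology of $Y$.

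Next I would bring in the distinguished pseudonorm $\rho'$. Because $\rho'$ is continuous for the locally solid topology on $Y$, the convergence $Tx_{\alpha_{\beta}}\xrightarrow{}y$ upgrades to $\rho'(Tx_{\alpha_{\beta}}-y)\to 0$. At this point the subnet property of $\rho'$ applies to the net $Tx_{\alpha_{\beta}}-y$: there is a further subnet $Tx_{\alpha_{\beta_{\gamma}}}$ satisfying $Tx_{\alpha_{\beta_{\gamma}}}-y\xrightarrow{o}0$, equivalently $Tx_{\alpha_{\beta_{\gamma}}}\xrightarrow{o}y$ in $Y$. This produces the required order convergent subnet and shows that $T$ is order compact.

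I do not foresee a substantive obstacle here; the argument is essentially a composition of two implications, each of which is already isolated in the paper (topological boundedness of order bounded sets in locally solid lattices, and the subnet property as a bridge from pseudonorm convergence back to order convergence). The only minor care needed is to apply the subnet property to the translated net $Tx_{\alpha_{\beta}}-y$ rather than $Tx_{\alpha_{\beta}}$ itself, and to note that taking a further subnet of an already-chosen subnet is permissible in the definition of order compactness.
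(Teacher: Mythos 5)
Your proposal is correct and follows essentially the same route as the paper's proof: order bounded implies topologically bounded by local solidity, Montel gives a topologically convergent subnet, continuity of $\rho'$ converts this to pseudonorm convergence, and the subnet property recovers an order convergent further subnet. Your explicit care in applying the subnet property to the translated net $Tx_{\alpha_{\beta}}-y$ is a sound refinement of a step the paper leaves implicit, but it does not change the argument.
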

\begin{proof}
	Let $x_{\alpha}$ be an order bounded net in $X.$ As the operator $T\colon X\to Y$ is Montel, there exists a subnet $x_{\alpha_{\beta}}$ and $y\in Y$ such that $Tx_{\alpha_{\beta}}\xrightarrow{} y$ with respect to the locally solid topology on $Y$. Since the pseudonorm $\rho'$ on $Y$ is continuous, $Tx_{\alpha_{\beta}}\xrightarrow{} y$ with respect to the pseudonorm $\rho'$. As $\rho'$ has the subnet property, there exists $x_{\alpha_{\beta_{\gamma}}}$ such that $Tx_{\alpha_{\beta_{\gamma}}}\xrightarrow{o} y$. Hence, the operator $T\colon X\to Y$ is order compact. 
\end{proof}
\begin{rem}
	In Proposition~\ref{MontelOrder} and Proposition~\ref{MontelOrder2}, we use the structure on the range space $Y$ of a Montel operator $T\colon X\to Y$ to conclude that it is an order compact operator. We record from~\cite[24L.(a)]{F1974} that any continuous Riesz pseudonorm on locally solid space $Y$ having the Lebesgue property is a pseudonorm having the Fatou property, see Definition~\ref{Def002}.
\end{rem}

\begin{prop}
	Suppose that $X$ and $Y$ are locally solid vector lattices. Suppose that there exists a bounded pseudonorm $\rho$ on $X$ such that $U_{\rho}$ is a neighborhood of zero in $X$. Suppose further that $Y$ has the Lebesgue property. If an operator $T\colon X\to Y$ is order compact then $T$ is Montel. 
\end{prop}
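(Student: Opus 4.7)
The plan is to mimic the converse direction of the earlier proposition characterizing when $AM$-compact operators are Montel, combined with the Lebesgue property of the range. Let $x_{\alpha}$ be a topologically bounded net in $X$. Since $U_{\rho}$ is a zero neighborhood in $X$, topological boundedness supplies some $\lambda>0$ with $\lambda x_{\alpha}\in U_{\rho}$ for every $\alpha$. Thus the net $\lambda x_{\alpha}$ is pseudo-bounded with respect to $\rho$, and since $\rho$ is a bounded pseudonorm, it is order bounded in $X$. Rescaling by $1/\lambda$ preserves order boundedness, so $x_{\alpha}$ itself is order bounded in $X$.

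Now invoke the order compactness of $T$: there exist a subnet $x_{\alpha_{\beta}}$ and an element $y\in Y$ such that $Tx_{\alpha_{\beta}}\xrightarrow{o} y$ in $Y$. Because $Y$ has the Lebesgue property, every order null net in $Y$ converges to zero with respect to the locally solid topology on $Y$, so applying this to $Tx_{\alpha_{\beta}}-y$ yields that $Tx_{\alpha_{\beta}}\to y$ in the locally solid topology on $Y$. By Definition~\ref{DefinitionMontel}, this says precisely that $T$ is Montel.

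No step presents any real obstacle; the main point is simply to use the neighborhood condition on $U_{\rho}$ to convert topological boundedness into pseudo-boundedness, then invoke the boundedness of $\rho$ to pass to order boundedness, and finally use the Lebesgue property of $Y$ to upgrade order convergence of the image subnet to topological convergence.
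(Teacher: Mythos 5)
Your proof is correct and follows essentially the same route as the paper's: use the neighborhood hypothesis to absorb a topologically bounded net into a multiple of $U_{\rho}$, apply boundedness of $\rho$ to get order boundedness, then combine order compactness of $T$ with the Lebesgue property of $Y$ to upgrade order convergence of the image subnet to topological convergence. The only cosmetic difference is that the paper observes directly that $U_{\rho}$ itself is order bounded, whereas you apply boundedness of $\rho$ to the rescaled net; both are equivalent.
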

\begin{proof}
	Let $x_{\alpha}$ be a topologically bounded net in $X$. Since $\rho$ is a bounded pseudonorm on $X$, the set $U_{\rho}$ is order bounded. There exists a $\lambda\geq 0$ such that $\lambda x_{\alpha}\in U_{\rho}$ for every $\alpha$. Hence, the net $\lambda x_{\alpha},$ and in particular the net $x_{\alpha},$ are order bounded in $X$. As $T\colon X\to Y$ is order compact, there exists a subnet $x_{\alpha_{\beta}}$ and $y\in Y$ such that $Tx_{\alpha_{\beta}}\xrightarrow{o} y.$ Since $Y$ has a Lebesgue topology, $Tx_{\alpha_{\beta}}\xrightarrow{} y$ with respect to the locally solid topology on $Y$. This shows that the operator $T$ is Montel.
\end{proof}

\begin{prop}\label{bbOperator}
	Suppose that $X$ and $Y$ are locally solid vector lattices. Let $\rho'$ be a continuous bounded pseudonorm on $Y$. If an operator $T\colon X\to Y$ is boundedly bounded then $T$ maps topologically bounded sets into almost order bounded sets.
\end{prop}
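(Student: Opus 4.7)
The plan is to chain three implications that upgrade topological boundedness of an arbitrary $B \subseteq X$ all the way to order boundedness of $T(B) \subseteq Y$, which is strictly stronger than almost order boundedness. First, the boundedly bounded hypothesis on $T$ gives that $T(B)$ is topologically bounded with respect to the locally solid topology on $Y$. Second, I will use continuity of $\rho'$ to show that any topologically bounded subset of $Y$ is pseudo-bounded with respect to $\rho'$. Third, since $\rho'$ is a bounded pseudonorm in the sense of Definition~\ref{Def001}~(i), pseudo-boundedness forces order boundedness, and order boundedness trivially entails almost order boundedness.

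The main obstacle lies in the second step, because pseudonorms need not behave well under real scaling: only subadditivity and $\lim_{\theta\to 0}\rho'(\theta y)=0$ are available, and there is no a priori inequality of the form $\rho'(\lambda y)\leq \lambda\rho'(y)$ for arbitrary real $\lambda>0$. My plan is to reduce to integer scaling. Continuity of $\rho'$ at zero supplies a zero neighborhood $V_0 \subseteq \{y\in Y\colon \rho'(y)<1\}$; local solidness of $Y$ lets me shrink $V_0$ to a solid neighborhood $V$, and since solid sets are automatically balanced, $\lambda V \subseteq n V$ whenever $n$ is a positive integer with $n\geq \lambda$. Topological boundedness of $T(B)$ supplies some $\lambda > 0$ with $T(B) \subseteq \lambda V$, and consequently $T(B) \subseteq n V$ for a suitable integer $n$. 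Writing each $y\in T(B)$ as $y=n v$ with $v\in V$, subadditivity of $\rho'$ yields $\rho'(y) \leq n \rho'(v) < n$, which is the desired pseudo-boundedness.

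The remainder is formal. The bounded pseudonorm hypothesis on $\rho'$ converts the pseudo-bounded set $T(B)$ into an order-bounded subset of $Y$, say $T(B) \subseteq [-u, u]$ for some $u \in Y^+$. For any zero neighborhood $W$ of $Y$ we then have $T(B) \subseteq [-u, u] + W$, so $T(B)$ is almost order bounded in $Y$, completing the argument.
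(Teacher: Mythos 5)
Your proof is correct and follows the same route as the paper's own: the boundedly bounded hypothesis gives that $T(B)$ is topologically bounded, continuity of $\rho'$ converts this into pseudo-boundedness, and the bounded-pseudonorm hypothesis then yields order (hence almost order) boundedness. In fact your middle step is more careful than the paper's, which passes directly from $T(B)\subseteq \lambda U_{\rho'}$ to order boundedness without addressing the scaling issue for pseudonorms that the paper itself flags in Example~\ref{Example0002}; your reduction to integer scaling via a balanced solid zero neighborhood inside $\{y\in Y\colon \rho'(y)<1\}$ closes that small gap cleanly.
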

\begin{proof}
	Let $B\subseteq X$ be a topologically bounded subset. Since $T(B)$ is topologically bounded in $Y,$ there exists some $\lambda>0$ such that $ T(B)\subseteq \lambda U_{\rho'}.$ As $\rho'$ is a bounded pseudonorm on $Y$, the set $T(B)$ is order bounded in $Y$. Hence, $T(B)$ is almost order bounded in $Y$.
\end{proof}

\begin{prop}
	Suppose that $X$ is a barrelled locally convex-solid vector lattice, and that, $Y$ is a locally solid vector lattice with the Lebesgue property. Let $\rho_f(x)=|f(x)|$ and $\rho_g(x)=|g(x)|$ with $f\in X',$ the topological dual of $X,$ and $g\in Y'$ be seminorms on $X$ and $Y,$ respectively. If $\rho_g$ has the subnet property then every pseudonorm compact $T\colon (X,\rho_f)\to (Y,\rho_g)$ is a Montel operator.
\end{prop}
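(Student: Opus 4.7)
The plan is to chase a topologically bounded net in $X$ through three successive passages to subnets: first the pseudonorm-compactness of $T$ with respect to $\rho_f$ and $\rho_g$, then the subnet property of $\rho_g$, and finally the Lebesgue property of $Y$.

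First I would take a topologically bounded net $(x_{\alpha})$ in $X$, aiming to produce a subnet whose image under $T$ converges in the locally solid topology of $Y$. The initial step is to show that $(x_{\alpha})$ is pseudo-bounded with respect to $\rho_f$. Since $f$ belongs to the topological dual $X'$, it is continuous on $X$, so it maps the topologically bounded set $\{x_{\alpha}\}$ to a bounded subset of $\mathbb{R}$. Therefore $\rho_f(x_{\alpha})=|f(x_{\alpha})|$ is bounded in $\mathbb{R}$, i.e., the net is pseudo-bounded in $(X,\rho_f)$. (This is exactly where one may invoke barrelledness of $X$ if one prefers to deduce equicontinuity of $\{f\}$ on bounded sets; for a single continuous functional, plain continuity suffices.)

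Next, since $T\colon (X,\rho_f)\to (Y,\rho_g)$ is pseudonorm compact, Definition~\ref{DefnPsCompact} provides a subnet $(x_{\alpha_{\beta}})$ and some $y\in Y$ with $Tx_{\alpha_{\beta}}\xrightarrow{\rho_g} y$, that is, $|g(Tx_{\alpha_{\beta}}-y)|\to 0$. The pseudonorm $\rho_g$ has the subnet property by hypothesis, so a further subnet $(x_{\alpha_{\beta_{\gamma}}})$ satisfies $Tx_{\alpha_{\beta_{\gamma}}}\xrightarrow{o} y$ in $Y$. Finally, the Lebesgue property of $Y$ converts this order convergence into convergence in the locally solid topology of $Y$, yielding $Tx_{\alpha_{\beta_{\gamma}}}\to y$ in $Y$. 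By Definition~\ref{DefinitionMontel}, $T$ is a Montel operator.

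The argument is essentially a short chain and I do not expect a substantial obstacle; the only point that requires a moment's care is the first step, where one must remember that a continuous linear functional automatically maps topologically bounded sets to bounded scalar sets, so that topological boundedness in $X$ passes to pseudo-boundedness with respect to $\rho_f$. Beyond this, the proof is a direct composition of the definitions and the hypothesized properties of $\rho_g$ and $Y$.
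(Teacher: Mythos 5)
Your proof is correct and follows essentially the same chain as the paper's: topologically bounded $\Rightarrow$ $\rho_f$-pseudo-bounded $\Rightarrow$ ($\rho_g$-convergent subnet by pseudonorm compactness) $\Rightarrow$ (order convergent subnet by the subnet property) $\Rightarrow$ (topologically convergent by the Lebesgue property). The only difference is the first step, where the paper invokes barrelledness to make $U_{\rho_f}$ a zero neighborhood while you correctly observe that continuity of the single functional $f$ already sends topologically bounded sets to bounded scalar sets, which is a slightly more direct justification of the same fact.
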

\begin{proof}
	Let $x_{\alpha}$ be a topologically bounded net in $X$. Since $f\in X'$ and $X$ is a barrelled space, the set $U_{\rho_f}=\{x\in X\colon \rho_f(x)\leq 1 \}$ is a zero neighborhood in $X,$ see the proof of~\cite[Theorem 2.29]{AB2}. Hence, the net $x_{\alpha}$ is bounded with respect to seminorm $\rho_f$. As $T$ is pseudonorm compact, there exists a subnet $x_{\alpha_{\beta}}$ and $y\in Y$ such that $Tx_{\alpha_{\beta}}\xrightarrow{\rho_g} y$ in $Y$. As $\rho_g$ has the subnet property, there exists a subnet $x_{\alpha_{\beta}}$ such that $Tx_{\alpha_{\beta}}\xrightarrow{o} y$ in $Y$. Since $Y$ has Lebesgue topology, we have $Tx_{\alpha_{\beta}}\xrightarrow{} y$ with respect to locally solid topology of $Y$. This shows that $T$ is a Montel operator.
\end{proof}

{\bf Acknowledgments} 
This work was partially supported by TUBITAK Project 118F204. We would like to thank the referee for her/his valuable suggestions.

\end{document}